\newcommand*{\DashedArrow}[1][]{\mathbin{\tikz [baseline=-0.25ex,-latex, dashed,#1] \draw [#1] (0pt,
0.5ex) -- (1.3em,0.5ex);}}
\newcommand{\cqd}{\qed}
\newtheorem{theorem}{Theorem}[section]
\newtheorem{lemma}[theorem]{Lemma}
\theoremstyle{definition}
\newtheorem{definition}[theorem]{Definition}
\theoremstyle{proposition}
\newtheorem{proposition}[theorem]{Proposition}
\newtheorem{remark}[theorem]{Remark}
\newtheorem{cor}[theorem]{Corollary}
\numberwithin{equation}{section}
\theoremstyle{main}
\newtheorem{main}{Theorem}
\begin{document}

\title[Branched pull-back components]{Branched pull-back components of the space of codimension 1 foliations on $\mathbb P^n$}

\author[W. Costa e Silva]{W. Costa e Silva}
\address{IRMAR, Universit\'e Rennes 1, Campus de Beaulieu, 35042 Rennes Cedex France}
\curraddr{IRMAR, Universit\'e Rennes 1, Campus de Beaulieu, 35042 Rennes Cedex France}
\email{wancossil@gmail.com}
\thanks{I am deeply grateful to A. Lins Neto and D. Cerveau for the discussions, suggestions and comments. This work was developed at IMPA (Rio de Janeiro, Brazil) and was supported by IMPA and CNPq (process number 142250/2005-8)}

\date{}

\dedicatory{To my mother}

\begin{abstract} Let $\mathcal{F}$ be written as $ f^{*}\mathcal{G}$, where $\mathcal{G}$ is a foliation in $ {\mathbb P^2}$ with three invariant lines in general position, say $(XYZ)=0$, and $f:{\mathbb P^n}\DashedArrow[->,densely dashed]{\mathbb P^2}$, $f=(F^\alpha_{0}:F^\beta_{1}:F^\gamma_{2})$ is a nonlinear rational map. Using local stability results of singular holomorphic foliations, we prove that: if $n\geq 3$, the foliation $\mathcal{F}$ is globally stable under holomorphic deformations.  As a consequence we obtain new irreducible componentes for the space of codimension one foliations on  $\mathbb P^n$. We present also a result which characterizes holomorphic foliations on ${\mathbb P^n}, n\geq 3$ which can be obtained as a pull back of foliations on $ {\mathbb P^2}$ of degree $d\geq2$ with three invariant lines in general position. \end{abstract}

\maketitle

\tableofcontents

\section{Introduction}

Let $\mathcal F$ be a holomorphic singular foliation on $\mathbb P^{n}$ of codimension $1$, $\Pi_{n}:\mathbb C^{n+1}\backslash \left\{0\right\} \to \mathbb P^{n}$  be the natural projection and $\mathcal F^*=\Pi_{n}^*\left(\mathcal F\right).$ It is known that $\mathcal F^*$ can be defined by an integrable $1-$form $\Omega=\sum_{j=0}^{n}A_{j}dz_{j}$ where the $A_{j}'s$ are homogeneous polynomials of the same degree satisfying the Euler condition:
\begin{equation}\label{rel-1}
\sum_{j=0}^{n}z_{j}A_{j}\equiv0.
\end{equation}
The singular set $S(\mathcal F)$ is given by $S(\mathcal F)=\left\{A_{0}=...=A_{n}=0\right\}$
and is such that codim$\left(S\left(\mathcal F\right)\right)\geq2$. The integrability condition is given by 
\begin{equation}\label{rel-2}
\Omega \wedge d\Omega=0.
\end{equation}

The form $\Omega$ will be called a homogeneous expression of $\mathcal F.$
The degree of $\mathcal F$ is, by definition, the number of tangencies (counted with multiplicities) of a generic linearly embedded $\mathbb P^1$ with $\mathcal F.$ If we denote it by $deg(\mathcal F)$ then $deg\left(\mathcal F\right)=d-1$, where $d=deg\left(A_{0}\right)=...=deg\left(A_{n}\right).$ We denote the space of foliations of a fixed degree $k$ in  $\mathbb P^n$ by $\mathbb{F}{\rm{ol}}\left(k,n\right).$ Due to the integrability condition and the fact that $S\left(\mathcal F\right)$ has codimension $\geq2$, we see that $\mathbb{F}{\rm{ol}}\left(k,n\right)$ can be identified with a Zariski's open set in the variety obtained by projectivizing  the space of forms $\Omega$ which satisfy (\ref{rel-1}) and (\ref{rel-2}). It is in fact an intersection of quadrics. To obtain a satisfactory description of $\mathbb{F}{\rm{ol}}\left(d;n\right)$ (for example, to talk about deformations) it would be reasonable to know the decomposition of $\mathbb{F}{\rm{ol}}\left(d;n\right)$ in irreducible components. This leads us to the following:
\vskip0.2cm
\noindent \textbf{{\underline{Problem}: \emph{Describe and classify the irreducible components of\break $\mathbb{F}{\rm{ol}}\left(k;n\right)$ $k\geq 3$ on ${\mathbb P^n}$, $n\geq 3.$}}}
\vskip0.2cm
 In the  paper \cite{cln}, the authors proved that the space of holomorphic codimension one foliations of degree $2$ on  ${\mathbb P^n}, n\geq 3$, has six irreducible components, which can be described by geometric and dynamic properties of a generic element. We refer the curious reader to \cite{cln} and \cite{ln}  for a detailed description of them.
 There are known families of irreducible components in which the typical element is a pull-back of a foliation on ${\mathbb P^2}$ by a rational map. Given a generic rational map $f: {\mathbb P^n}  \DashedArrow[->,densely dashed    ]   {\mathbb P^2}$ of degree $\nu\geq1$, it can be written in homogeneous coordinates as $f=(F_0,F_1,F_2)$ where $F_0,F_1$ and $F_2$ are homogeneous polynomials of degree $\nu$. Now consider a foliation $\mathcal G$ on ${\mathbb P^2}$ of degree $d\geq2.$ We can associate to the pair $(f,\mathcal G)$ the pull-back foliation $\mathcal F=f^{\ast}\mathcal G$.
The degree of the foliation $\mathcal F$ is $\nu(d+2)-2$ as proved in \cite{clne}.
Denote by $PB(d,\nu;n)$ the closure in $\mathbb{F}{\rm{ol}}\left(\nu(d+2)-2,n\right)$, $n \geq 3$ of the set of foliations $\mathcal F$ of the form $f^{\ast}\mathcal G$. Since $(f,\mathcal G)\to f^{\ast}\mathcal G$ is an algebraic parametrization of $PB(d,\nu;n)$ it follows that $PB(d,\nu;n)$ is an unirational irreducible algebraic subset of $\mathbb{F}{\rm{ol}}\left(\nu(d+2)-2,n\right)$, $n \geq 3$. We have the following result:
\begin{theorem}
  $PB(d,\nu;n)$ is a unirational irreducible component of\break $\mathbb{F}{\rm{ol}}\left(\nu(d+2)-2,n\right);$ $n \geq 3$, $\nu\geq1$  and $d \geq 2$.
\end{theorem}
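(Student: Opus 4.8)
The goal is to show two things: that $PB(d,\nu;n)$ is irreducible (which is essentially given, since it is the closure of the image of the irreducible parameter space of pairs $(f,\mathcal G)$ under the algebraic map $(f,\mathcal G)\mapsto f^*\mathcal G$, hence unirational and irreducible), and — the substantive part — that it is an *irreducible component*, i.e. that it is not contained in the closure of any strictly larger irreducible subset of $\mathbb{F}\mathrm{ol}(\nu(d+2)-2,n)$. I would approach this via a stability (rigidity) argument: it suffices to prove that a generic element $\mathcal F_0=f_0^*\mathcal G_0$ of $PB(d,\nu;n)$ is *stable* under holomorphic deformations, meaning that every holomorphic family $\{\mathcal F_t\}$ of foliations in $\mathbb P^n$ with $\mathcal F_t$ close to $\mathcal F_0$ has the property that $\mathcal F_t\in PB(d,\nu;n)$ for all small $t$. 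Equivalently, the tangent space to $\mathbb{F}\mathrm{ol}$ at $\mathcal F_0$ coincides with the tangent space to $PB(d,\nu;n)$; since the latter is closed and irreducible and contains $\mathcal F_0$ as a smooth point of the parametrization, this forces $PB(d,\nu;n)$ to be a whole component.

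The plan for the stability step is as follows. First I would choose the generic pair $(f_0,\mathcal G_0)$ carefully: take $\mathcal G_0$ on $\mathbb P^2$ of degree $d\geq 2$ with a rich singular/dynamical structure (e.g. only nondegenerate singularities, no invariant algebraic curve other than the generic ones, a nonsolvable or otherwise rigid holonomy), and take $f_0$ a generic rational map of degree $\nu$ so that $f_0$ is a submersion outside a set of codimension $\geq 2$ and the critical values of $f_0$ are in general position with respect to $S(\mathcal G_0)$. The foliation $\mathcal F_0=f_0^*\mathcal G_0$ then has a large singular set coming from two sources: the pull-back $f_0^{-1}(S(\mathcal G_0))$ of the singularities of $\mathcal G_0$, and the critical set of $f_0$. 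Restricting to a generic $\mathbb P^3\subset\mathbb P^n$ (to reduce to $n=3$, using that the problem is local along the fibers and that a deformation in $\mathbb P^n$ restricts to one in $\mathbb P^3$), the codimension-2 components of $S(\mathcal F_0)$ of Kupka type and the transverse types of $\mathcal F_0$ along them are what I would use. The key local input — which is exactly what the paper announces it will prove using "local stability results of singular holomorphic foliations" — is that near each such component the deformed foliation $\mathcal F_t$ still looks like a pull-back: the Kupka phenomenon and the local structure of $\mathcal F_0$ along $f_0^{-1}(\mathrm{Sing}(\mathcal G_0))$ persist, so $\mathcal F_t$ admits, near the relevant strata, a holomorphic first integral / local submersion onto a disc that fibers it the same way.

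Then comes the globalization: one has local "candidate pull-back structures" for $\mathcal F_t$ on an open cover coming from the stable strata, and one must patch them into a genuine global rational map $f_t:\mathbb P^n\DashedArrow[->,densely dashed]\mathbb P^2$ with $\mathcal F_t=f_t^*\mathcal G_t$ for some deformed $\mathcal G_t$ on $\mathbb P^2$. I would do this by showing the local submersions glue (up to the $\mathrm{PGL}$ or $\mathrm{Aut}(\mathcal G_0)$ ambiguity, which is discrete or at least rigid under the genericity hypotheses) along connected strata, producing a globally defined multivalued map whose monodromy is controlled by the topology of $\mathbb P^n\setminus S(\mathcal F_0)$, and then invoking a Hartogs-type / Levi extension argument together with the fact that $n\geq 3$ (so that $\mathbb P^n$ minus a codimension-$\geq 2$ set is simply connected and meromorphic functions extend) to conclude the glued object is a single rational map of the correct degree $\nu$. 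Finally I would check that the resulting $(f_t,\mathcal G_t)$ depends holomorphically on $t$, so $\mathcal F_t\in PB(d,\nu;n)$, completing the proof that $PB(d,\nu;n)$ is an irreducible component.

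The main obstacle I anticipate is precisely this gluing/monodromy step: the local first integrals are only defined up to composition with a local automorphism of $(\mathbb P^2,\mathcal G_0)$, and controlling that these ambiguities are consistent around loops — so that the patched map is well-defined and single-valued rather than merely defined on a cover — is where the genericity of $\mathcal G_0$ (rigidity of its holonomy, finiteness of $\mathrm{Aut}(\mathcal G_0)$) and the hypothesis $n\geq 3$ must be used in an essential and delicate way. Controlling the degree of the extended map (ruling out degree drop or the appearance of base-point behavior not of pull-back type) is the second delicate point, and I expect it to follow from the persistence of the degree of the divisor $f_t^{-1}(\text{a line})$, which is detectable from the stable part of $S(\mathcal F_t)$.
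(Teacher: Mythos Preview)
The paper does not actually prove Theorem~1.1: immediately after stating it, the text says ``The case $\nu=1$, of linear pull-backs, was proven in \cite{caln}, whereas the case $\nu>1$, of nonlinear pull-backs, was proved in \cite{clne}.'' So the ``paper's own proof'' is a citation to prior work, and Theorem~1.1 serves only as background for the new results (Theorems~A and~B). There is nothing in the paper for your proposal to be compared against.

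That said, your strategy is broadly the right one and is the same skeleton used both in the cited papers and in this paper's proof of Theorem~B: pick a generic pair $(f_0,\mathcal G_0)$, analyse the Kupka-type strata and the special (here, quasi-homogeneous) singularities of $\mathcal F_0=f_0^*\mathcal G_0$, show they persist under deformation, and from the deformed singular loci rebuild a map $f_t$ and a foliation $\mathcal G_t$ with $\mathcal F_t=f_t^*\mathcal G_t$. Where your sketch diverges from the actual arguments is the reconstruction step. You propose to glue local first integrals/submersions modulo an $\mathrm{Aut}(\mathcal G_0)$-ambiguity and then invoke simple connectivity and Hartogs to get a global rational map. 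The proofs in \cite{clne} and in this paper (see \S\ref{subsection5.4}) do something more concrete and more algebraic: they track the deformed fibers $V_\tau(t)$ of a few distinguished singular points (the ``corners''), show these remain smooth complete intersections of hypersurfaces of the correct degrees (using Sernesi-type deformation theory of complete intersections and a Noether normalization lemma), and then \emph{define} $f_t$ directly from those hypersurfaces. The foliation $\mathcal G_t$ is read off from the exceptional divisor of a (weighted) blow-up at the indeterminacy points, and equality $\mathcal F_t=f_t^*\mathcal G_t$ is established by finding a common non-algebraic leaf and using the tangency locus argument.

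Your gluing-of-local-submersions picture is not wrong in spirit, but as you yourself flag, the monodromy/consistency step is where it is vague, and in practice this is exactly the step that the literature avoids by working with global algebraic objects (deformed hypersurfaces) rather than patched local data. If you want to turn your sketch into a proof, the missing idea is this replacement: do not try to glue local charts, instead show that certain codimension-two singular strata of $\mathcal F_t$ are globally complete intersections and use their defining equations as the components of $f_t$.
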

The case $\nu=1$, of linear pull-backs, was proven in \cite{caln}, whereas the case $\nu>1$, of nonlinear pull-backs, was proved in \cite{clne}. We can ask whether it is possible to obtain new families of irreducible components of nonlinear pull-back type. A natural question arises: what kind of family of rational maps should we consider? We will see that the family of rational maps we will use requires the existence of a very special set of foliations on $ \mathbb P^2$. This will be the content of theorem A.

Without details, let us state some results that we prove in this paper. Denote by $\mathcal{H}(d,2)$ the subset of foliations on $\mathbb P^2$ of degree $d$ with all singularities of Hyperbolic-Type and $Il_{3}(d,2)$ the set of foliations on $\mathbb P^2$  of degree $d$ having $3$ invariant lines in general position. Let $A\left(d\right)=Il_{3}\left(d,2\right)\cap \mathcal{H}\left(d,2\right)$. Our first result is the following:
\begin{main}\label{teoa}  Let $d\geq 2$. There exists an open and dense subset $M\left(d\right)\subset A\left(d\right)$, such that if $\mathcal G \in M\left(d\right)$  then the only algebraic invariant curves of $\mathcal G$ are the three lines. 
\end{main}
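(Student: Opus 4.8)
The plan is to reduce the statement, by a standard incidence-variety argument, to the construction of a \emph{single} foliation in $A(d)$ whose only invariant algebraic curves are the three lines, and then to build such a foliation using Camacho--Sad index obstructions. First, since $\mathrm{PGL}_3(\mathbb C)$ acts transitively on ordered triples of lines in general position and preserves both $Il_3(d,2)$ and $\mathcal H(d,2)$, it suffices to treat the fixed triangle $\{XYZ=0\}$: let $\mathcal L(d)$ be the projectivized linear space of degree-$d$ foliations of $\mathbb P^2$ leaving $X=0$, $Y=0$ and $Z=0$ invariant. This space is irreducible; ``all singularities hyperbolic'' is Zariski open, so $A'(d):=\mathcal L(d)\cap\mathcal H(d,2)$ is irreducible, and one checks it is nonempty by producing one hyperbolic element (e.g. a sufficiently general Darboux/logarithmic foliation tangent to the triangle). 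Since $A(d)$ fibers over the irreducible variety of general-position triangles with fiber $A'(d)$, it is itself irreducible, so ``open and dense'' is meaningful. Moreover every $\mathcal G\in A'(d)$ has only non-degenerate, hence non-dicritical, singularities, so by Carnicer's theorem any $\mathcal G$-invariant algebraic curve has degree $\le d+2$. For each $1\le m\le d+2$, invariance of a fixed degree-$m$ curve by a foliation being a closed algebraic condition, the set $B_m\subset A'(d)$ of foliations admitting an invariant curve of degree $m$ with an irreducible component other than $X$, $Y$, $Z$ is the image of a projective incidence variety; hence $M'(d):=A'(d)\setminus\bigcup_{m=1}^{d+2}\overline{B_m}$ is Zariski open, and it is dense as soon as some $\mathcal G_0\in A'(d)$ has no invariant algebraic curve besides the three lines, in which case $M(d):=\mathrm{PGL}_3\cdot M'(d)$ is the desired subset.

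To produce $\mathcal G_0$, suppose $\mathcal G\in A'(d)$ and $C_0$ is an irreducible $\mathcal G$-invariant curve, $C_0\notin\{X=0,Y=0,Z=0\}$. Every line $L_i$ meets $C_0$; at a point $q\in C_0\cap L_i$ which is not a singularity of $\mathcal G$ the local leaf through $q$ would lie in $C_0\cap L_i$, forcing $C_0=L_i$ --- impossible --- so $C_0$ meets the triangle only at singularities of $\mathcal G$. At such a point $p$ the singularity is non-degenerate with exactly two smooth separatrices, one being $L_i$; as $C_0\ne L_i$ is invariant through $p$, it equals the transverse separatrix. In particular $C_0$ avoids the three vertices (where the only separatrices are the coordinate lines), $C_0$ is smooth at $p$, meets $L_i$ transversally there, and $\mathrm{CS}(\mathcal G,C_0,p)$ is the reciprocal of the eigenvalue ratio of $\mathcal G$ at $p$ relative to $L_i$. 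The Camacho--Sad formula now gives $m^2=(C_0\cdot C_0)=\sum_{p\in C_0}\mathrm{CS}(\mathcal G,C_0,p)$, which expresses the integer $m^2$ as a combination of eigenvalue ratios of $\mathcal G$ at the singular points $C_0$ passes through (those on the triangle, plus finitely many interior singularities, where again $C_0$ is a separatrix and the index is an eigenvalue-ratio expression). For fixed $d$ there are only finitely many such identities: finitely many $m\le d+2$ and, for each, finitely many admissible sets of singular points and local intersection patterns.

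The crux is therefore the claim that, as $\mathcal G$ ranges over $\mathcal L(d)$, the tuple of all its eigenvalue ratios varies in a Zariski-dense subset of the affine variety defined by the only forced relations --- the Camacho--Sad identities $\sum_{p}\mathrm{CS}(\mathcal G,L_i,p)=1$ along the three lines, together with the Baum--Bott relation --- equivalently, that for very general $\mathcal G$ these ratios are algebraically independent modulo those relations. I would prove this by a dimension count combined with an explicit computation of the differential of the eigenvalue map at one transparent foliation in $\mathcal L(d)$. Granting it, for very general $\mathcal G_0\in A'(d)$ none of the finitely many Camacho--Sad identities above can hold, so no such $C_0$ exists; hence $\mathcal G_0\notin\bigcup_m B_m$, and $M'(d)$ is open and dense. (An alternative to the eigenvalue-freedom lemma: show that for very general $\mathcal G_0$ the holonomy of one invariant line is non-solvable with all orbits infinite, which by itself precludes any further invariant algebraic curve; this substitutes a monodromy/transcendence argument for the linear-algebra one.)

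The routine parts are the reduction and the finite bookkeeping over $(m,\text{intersection pattern})$. The two substantive points are (i) exhibiting an honestly hyperbolic member of $\mathcal L(d)$, so that $A'(d)\ne\varnothing$ and Carnicer's bound applies, and (ii) the eigenvalue-freedom lemma; I expect (ii) to be the main obstacle, since one must show that the rigid linear family $\mathcal L(d)$ --- every member of which is constrained to carry the triangle --- nevertheless moves its local eigenvalue data as freely as the trivial relations permit. Care is also needed at the degenerate intersection configurations (a tangency of $C_0$ with a line, or a node of $C_0$ at an interior singularity), where the relevant Camacho--Sad index is a more elaborate function of the local data and must be excluded by the same genericity.
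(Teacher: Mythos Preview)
Your reduction is sound and indeed matches the paper's overall shape: both arguments observe that, once a single element of $A'(d)$ with no extra invariant curve is produced, the set of such foliations is automatically Zariski open and nonempty, hence dense. (The paper is less explicit about the finiteness of the ``bad'' locus; your use of Carnicer's bound $m\le d+2$ makes this step cleaner.) Where you diverge is in producing that one element. You propose a genericity argument --- an ``eigenvalue-freedom lemma'' asserting that the Camacho--Sad indices of a general member of $\mathcal L(d)$ satisfy no relations beyond the forced ones --- and you correctly flag this as the main obstacle, leaving it unproved. The paper sidesteps this entirely by a direct construction: it writes down an explicit degree-$2$ Lotka--Volterra foliation $\mathcal H$ (with specific complex coefficients) tangent to the triangle, checks by hand that all its singularities are hyperbolic, invokes a Cerveau--Lins~Neto degree bound together with Camacho--Sad to exclude any further invariant curve, and then for arbitrary $d$ pulls back by the ramified self-map $\mathcal T:[X:Y:Z]\mapsto[X^{d-1}:Y^{d-1}:Z^{d-1}]$, which preserves hyperbolicity of the singularities and creates no new invariant curves (any such curve would push forward under $\mathcal T$ to a fourth invariant curve of $\mathcal H$). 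This is considerably lighter than establishing eigenvalue freedom inside the constrained family $\mathcal L(d)$, and it handles all $d\ge 2$ uniformly. Your alternative route via non-solvable holonomy of an invariant line would also work in principle, but is again heavier than the explicit example.
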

Let us describe the type of pull-back foliation that we will consider. Let $\mathcal G$ be a foliation on $\mathbb P^2$ with three invariant straight lines in general position, say $\ell_{0}, \ell_{1}$ 
and $\ell_{2}$. Consider coordinates $(X,Y,Z)\in \mathbb C^3$ such that $\ell_{0}=\Pi_2(X=0)$, $\ell_{1}=\Pi_2(Y=0)$ and $\ell_{2}=\Pi_2(Z=0)$, where $\Pi_{2}:\mathbb C^{3}\backslash \left\{0\right\} \to \mathbb P^{2}$ is the natural projection. The foliation $\mathcal G$ can be represented in these coordinates by a polynomial $1$-form of the type  $$ \Omega= YZA\left(X,Y,Z\right)dX+XZB\left(X,Y,Z\right)dY+XYC\left(X,Y,Z\right)dZ$$ where by $(1)$ $A+B+C=0$.

 Let  $f: {\mathbb P^n}  \DashedArrow[->,densely dashed    ]   {\mathbb P^2}$ be a rational map represented in the coordinates\break $(X,Y,Z)\in \mathbb{C}^3$ and $W \in \mathbb C^{n+1}$ by $\tilde f=(F^{\alpha} _{0},F^{\beta}_{1},F^{\gamma}_{2})$ where $F_{0},F_{1}$ and $F_{2} \in \mathbb C[W]$ are homogeneous polynomials without common factors satisfying $$\alpha .deg(F_{0})=\beta .deg(F_{1})=\gamma .deg(F_{2})=\nu.$$
The pull back foliation $f^{*}(\mathcal G)$ is then defined by 
$$\tilde\eta_{[f,\mathcal G]}\left(W\right)=\left[{\alpha}F_{1}F_{2}\left(A\circ F\right) dF_{0} + {\beta}F_{0}F_{2}\left(B\circ F\right) dF_{1} + {\gamma}F_{0}F_{1}\left(C\circ F\right) dF_{2}\right],$$
where each coefficient of  $\tilde\eta_{[f,\mathcal G]}\left(W\right)$ has degree $\Gamma=\nu\left[(d-1)+\frac{1}{\alpha}+\frac{1}{\beta}+\frac{1}{\gamma}\right] - 1.$ The crucial point here is that the mapping $f$ sends the three hypersurfaces $(F_{i}=0)$ contained in its critical set over the three lines invariant by $\mathcal G$.

Let $PB\left(\Gamma-1,\nu,\alpha,\beta,\gamma\right)$ be the closure in $\mathbb{F}{\rm{ol}}\left(\Gamma-1,n\right)$ of the set $\left\{ \left[\tilde\eta_{[f,\mathcal G]} \right] \right\}$, where $\tilde\eta_{[f,\mathcal G]}$ is as before. It is an unirational irreducible algebraic subset of $\mathbb{F}{\rm{ol}}\left(\Gamma-1,n\right)$. We will return to this point in Section \ref{section4}.

Let us state the main result of this work.
\begin{main}\label{teob}$PB(\Gamma-1,\nu,\alpha,\beta,\gamma)$ is a unirational irreducible component of $\mathbb{F}{\rm{ol}}\left(\Gamma-1,n\right)$ for all $n \geq 3$, $deg(F_{0}).\alpha = deg(F_{1}).\beta = deg(F_{2}).\gamma=\nu\geq2$, $(\alpha, \beta,\gamma) \in \mathbb N^{3}$ such that $1< \alpha < \beta < \gamma$ and $d \geq 2$. 
 \end{main}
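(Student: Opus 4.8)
\medskip
\noindent\textbf{Outline of the proof of Theorem \ref{teob}.}
The plan is to reduce the statement to a stability assertion and then establish that assertion by a local-to-global argument. Since it has already been observed that $PB(\Gamma-1,\nu,\alpha,\beta,\gamma)$ is a closed, irreducible, unirational subset of $\mathbb{F}{\rm{ol}}\left(\Gamma-1,n\right)$, it will be enough to show that a generic element $\mathcal F_{0}$ of it is \emph{stable}: every germ of holomorphic deformation $\left\{\mathcal F_{t}\right\}_{t\in\left(\mathbb C,0\right)}$ of $\mathcal F_{0}$ inside $\mathbb{F}{\rm{ol}}\left(\Gamma-1,n\right)$ stays in $PB(\Gamma-1,\nu,\alpha,\beta,\gamma)$ for $|t|$ small. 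This forces $PB(\Gamma-1,\nu,\alpha,\beta,\gamma)$ to contain a neighbourhood of $\mathcal F_{0}$ in $\mathbb{F}{\rm{ol}}\left(\Gamma-1,n\right)$, hence to coincide with an irreducible component. I would take $\mathcal F_{0}=f^{*}\mathcal G$ with $\mathcal G\in M(d)$ as supplied by Theorem \ref{teoa} --- so that the only algebraic invariant curves of $\mathcal G$ are the three lines $\ell_{0},\ell_{1},\ell_{2}$ and every singularity of $\mathcal G$ is of hyperbolic type --- and $f=(F_{0}^{\alpha}:F_{1}^{\beta}:F_{2}^{\gamma})$ sufficiently generic of the prescribed type (the $F_{i}$ irreducible and pairwise coprime, the indeterminacy locus of $f$ of codimension $\geq 3$, and the critical curve of $f$ in $\mathbb P^{2}$ meeting $\ell_{0}\cup\ell_{1}\cup\ell_{2}$ and $\mathrm{Sing}(\mathcal G)$ transversally).

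First I would record the rigid geometric data of $\mathcal F_{0}$. The hypersurfaces $S_{i}=(F_{i}=0)$, $i=0,1,2$, are invariant by $\mathcal F_{0}$, and along a generic point of $S_{i}$ the foliation $\mathcal F_{0}$ is, transversally, the branched pull-back of the germ of $\mathcal G$ along $\ell_{i}$ by a map $w\mapsto w^{m}$ with $m\in\{\alpha,\beta,\gamma\}$. By Theorem \ref{teoa}, $S_{0},S_{1},S_{2}$ are the only algebraic hypersurfaces invariant by $\mathcal F_{0}$, and $\mathcal F_{0}$ is non-dicritical. Away from the indeterminacy locus of $f$, the codimension-two set $K=f^{-1}\left(\mathrm{Sing}(\mathcal G)\right)$ splits into Kupka-type components of $S(\mathcal F_{0})$: at a generic point of the component lying over a singularity of $\mathcal G$ with hyperbolic linear part $\lambda_{1}\,Y\,dX+\lambda_{2}\,X\,dY$, a local $1$-form defining $\mathcal F_{0}$ --- obtained from $\tilde\eta_{[f,\mathcal G]}$ after removing common polynomial factors --- has linear part $a\,\lambda_{1}\,w_{2}\,dw_{1}+b\,\lambda_{2}\,w_{1}\,dw_{2}$ with $a,b\in\{\alpha,\beta,\gamma\}$, and since $\lambda_{1}/\lambda_{2}\notin\mathbb R$ this is a Kupka point with linearizable, hyperbolic transversal type. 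Thus $\alpha,\beta,\gamma$ (and $\nu,d$) are determined by discrete invariants of $\mathcal F_{0}$: the degrees of the $S_{i}$, the eigenvalue ratios of these transversal types, and $\deg(\mathcal F_{0})=\Gamma-1$.

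Next I would propagate this picture along $\left\{\mathcal F_{t}\right\}$. The discrete invariants just listed are locally constant in $t$, so the multiplicities $\alpha,\beta,\gamma$ are preserved. The three invariant hypersurfaces persist: there exist $S_{i}^{t}$, holomorphic in $t$, with $S_{i}^{0}=S_{i}$ and $S_{i}^{t}$ invariant by $\mathcal F_{t}$ --- a stability statement for invariant hypersurfaces in the spirit of \cite{caln} and \cite{clne}, whose obstruction lies in a cohomology group vanishing for $n\geq3$ by positivity of the relevant line bundles together with non-dicriticalness and the hyperbolic-type hypotheses. The Kupka-type components of $K$ persist with unchanged transversal type, by the classical stability of Kupka singularities and the holomorphic rigidity of hyperbolic linearizable germs. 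Since $\deg(\mathcal F_{t})\equiv\Gamma-1$ and the $S_{i}^{t}$ deform the $S_{i}$, the reduced equations $F_{i,t}$ of $S_{i}^{t}$ keep the degrees $\deg(F_{0,t})=\nu/\alpha$, $\deg(F_{1,t})=\nu/\beta$, $\deg(F_{2,t})=\nu/\gamma$, so that
\[
f_{t}:=(F_{0,t}^{\alpha}:F_{1,t}^{\beta}:F_{2,t}^{\gamma})
\]
is again a rational map of the type occurring in Theorem \ref{teob}, holomorphic in $t$ and equal to $f$ at $t=0$; by Theorem \ref{teoa} and the genericity of $\mathcal F_{0}$ the $S_{i}^{t}$ remain the only invariant algebraic hypersurfaces of $\mathcal F_{t}$, so $f_{t}$ is intrinsically attached to $\mathcal F_{t}$.

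The remaining --- and, I expect, hardest --- step is to prove $\mathcal F_{t}=f_{t}^{*}\mathcal G_{t}$ for some foliation $\mathcal G_{t}$ of degree $d$ on $\mathbb P^{2}$. Over a Zariski-open subset of $\mathbb P^{2}$ the map $f_{t}$ is a submersion with connected fibres; using the persistence of the Kupka structure along $K$ and of the branched transversal models along the $S_{i}^{t}$, one checks that a local defining $1$-form of $\mathcal F_{t}$ annihilates the tangent spaces to the fibres of $f_{t}$ on a dense open set, i.e. $\mathcal F_{t}$ is constant along those fibres and is there the $f_{t}$-pull-back of a foliation $\mathcal G_{t}$; a Hartogs/Levi extension argument --- valid since $n\geq3$ and $S(\mathcal F_{t})$ has codimension $\geq2$ --- then extends the situation across the exceptional loci of $f_{t}$ and produces $\mathcal G_{t}$ on all of $\mathbb P^{2}$ with $f_{t}^{*}\mathcal G_{t}=\mathcal F_{t}$. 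Comparing degrees gives $\deg(\mathcal G_{t})=d$; the lines $\ell_{i}=f_{t}(S_{i}^{t})$ are invariant by $\mathcal G_{t}$ and remain in general position, so $\mathcal G_{t}\in Il_{3}(d,2)$ (in fact $\mathcal G_{t}\in A(d)$, hyperbolicity being an open condition). Hence $\mathcal F_{t}=f_{t}^{*}\mathcal G_{t}\in PB(\Gamma-1,\nu,\alpha,\beta,\gamma)$ for $|t|$ small, which yields the stability of $\mathcal F_{0}$ and therefore Theorem \ref{teob}; unirationality of the component has already been observed.
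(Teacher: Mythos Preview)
Your overall architecture --- reduce to stability of a generic element, recover the map $f_{t}$, then recover $\mathcal G_{t}$ --- matches the paper's. But the two substantive steps are carried out quite differently, and in your version the second one is a genuine gap.

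\textbf{Recovering $f_{t}$.} You propose to read off $f_{t}$ from the three deformed invariant hypersurfaces $S_{i}^{t}$, invoking a persistence-of-invariant-hypersurfaces principle. The paper does not do this: it never tracks the $S_{i}$ directly. Instead it exploits the quasi-homogeneous singularities of $\mathcal F_{0}$ along $I(f_{0})$ (Lemma \ref{lema5.6} and Proposition \ref{prop5.5}), which you do not mention at all. Their stability gives a deformed indeterminacy locus $I(t)$ and, via the Kupka components, deformed codimension-two varieties $V_{\tau}(t)$ over each $\tau\in\mathrm{Sing}(\mathcal G_{0})$. The three ``corner'' varieties $V_{a}(t),V_{b}(t),V_{c}(t)$ are shown to be complete intersections (via \cite{Ser0}) and, using Noether's normalization lemma together with the strict inequalities $\deg F_{0}>\deg F_{1}>\deg F_{2}$, one extracts polynomials $P_{0}(t),P_{1}(t),P_{2}(t)$ with $f_{t}=(P_{0}^{\alpha}:P_{1}^{\beta}:P_{2}^{\gamma})$. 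A nontrivial further argument (Lemmas \ref{casomenor}, \ref{casomaior}, splitting on whether $\beta\gamma\le\alpha(\beta+\gamma)$) is then needed to show that \emph{all} the $V_{\tau}(t)$ are fibres of $f_{t}$. Your route via invariant hypersurfaces may be workable, but the cohomological vanishing you allude to is not supplied, and in any case it bypasses the structure the paper actually uses.

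\textbf{Recovering $\mathcal G_{t}$ and proving $\mathcal F_{t}=f_{t}^{*}\mathcal G_{t}$.} This is where your proposal breaks down. You write that ``one checks that a local defining $1$-form of $\mathcal F_{t}$ annihilates the tangent spaces to the fibres of $f_{t}$ on a dense open set'', justified by the Kupka structure along $K$ and the branched models along the $S_{i}^{t}$. But those data live on the \emph{singular} set and on three specific hypersurfaces; they say nothing about a generic fibre of $f_{t}$, and there is no mechanism offered for propagating tangency from those loci to an open set. The Hartogs/Levi extension you invoke afterwards is irrelevant until this is settled. The paper's argument here is quite specific and different: one performs a weighted blow-up of type $(\beta\gamma,\alpha\gamma,\alpha\beta)$ at the points of $I(t)$, restricts the strict transform of $\mathcal F_{t}$ to an exceptional divisor $E_{1}(t)\simeq\mathbb P^{2}_{[\beta\gamma,\alpha\gamma,\alpha\beta]}\simeq\mathbb P^{2}$ to \emph{define} $\mathcal G_{t}$, and then compares $\mathcal F_{t}$ with $\tilde{\mathcal F}_{t}:=f_{t}^{*}\mathcal G_{t}$. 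The key is Lemma \ref{lemafund}: using a singularity $\tau_{1}(t)$ of $\mathcal G_{t}$ with a \emph{non-algebraic} separatrix (this is exactly where Theorem \ref{teoa} is used), one shows that $\mathcal F_{t}$ and $\tilde{\mathcal F}_{t}$ share a common local leaf which is not algebraic. The tangency locus $\mathrm{Tang}(\mathcal F_{t},\tilde{\mathcal F}_{t})$ is algebraic and contains this non-algebraic surface, hence equals all of $\mathbb P^{n}$, giving $\mathcal F_{t}=\tilde{\mathcal F}_{t}$. Your outline contains neither the weighted blow-up construction of $\mathcal G_{t}$ nor the non-algebraic-leaf/tangency argument, and without a replacement for them the proof is incomplete.
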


\section{Foliations with 3 invariant lines}

\subsection{Basic facts} 
Denote by $I(d,2)$ the set of the holomorphic foliations on $\mathbb P^{2}$ of degree $d \geq 2$ that leaves the lines $X=0$, $Y=0$ and $Z=0$ invariant. We observe that any foliation which has $3$ invariant straight lines in general position can be carried to one of  these by a linear automorphism of $\mathbb P^{2}$.
The relation
$A+B+C=0$
enables to parametrize $I(d,2)$  as follows
\begin{align*}
  {\rm {H}}^0(\mathbb P^2,\mathcal O_{\mathbb P^2}(d-1))^{\times 2}  &\to {\rm {H}}^0(\mathbb P^2,\mathcal O_{\mathbb P^2}(d-1))^{\times 3} \\
                         (A,B) &\mapsto (A,B,-A-B).
\end{align*}
We let the group of linear automorphisms of $\mathbb P^{2}$ act on $I(d,2)$. After this procedure we obtain a set of foliations of degree $d$ that we denote by $Il_{3}(d,2)$.\par We are interested in making deformations of foliations and for our purposes we need a subset of  $Il_{3}(d,2)$ with good properties (foliations having few algebraic invariant curves and only hyperbolic singularities). We explain this properties in detail. Let $q \in U$ be an isolated singularity of a foliation $\mathcal G$ 
defined on an open subset of $U\subset \mathbb C^{2}.$ We say that $q$ is $nondegenerate$ if there exists a holomorphic vector field $X$ tangent to  $\mathcal G$ in a neighborhood of $q$ such that $DX(q)$ is nonsingular. In particular $q$ is an isolated singularity of $X.$
Let $q$ be a nondegenerate singularity of $\mathcal G$. The \textit{characteristic numbers} of $q$ are the quotients $\lambda$ and $\lambda^{-1}$ of the eingenvalues of $DX(q)$, which do not depend on the vector field $X$ chosen. If $\lambda \notin \mathbb Q_{+}$ then $\mathcal G$ exhibits exactly two (smooth and transverse) \textit{local separatrices} at $q$,  $S_{q}^{+}$ and $S_{q}^{-}$ with eigenvalues $\lambda_{q}^{+}$ and $\lambda_{q}^{-}$ and which are tangent to the characteristic directions of a vector field $X$. The characteristic numbers (also called Camacho-Sad index) of these local separatrices are given by 
$$I(\mathcal G,S_{q}^{+})=\frac{\lambda_{q}^{-}}{\lambda_{q}^{+}} \ \text{and} \
I(\mathcal G,S_{q}^{-})=\frac{\lambda_{q}^{+}}{\lambda_{q}^{-}}.$$
The singularity is \textit{hyperbolic} if the characteristic numbers are nonreal. We introduce the following spaces of foliations:

\begin{enumerate}
\item [(1)]$ND(d,2)=\{ \mathcal{G} \in \mathbb{F}{\rm{ol}}(d,2);$ the singularities of $\mathcal G$ are nondegenerate$\},$
\item[(2)] $\mathcal{H}(d,2)=\{\mathcal{G} \in ND(d,2);$ any characteristic number $\lambda$ of $\mathcal{G}$ satisfies $\lambda \in \mathbb C \backslash \mathbb R\}.$
\end{enumerate}
It is a well-known fact  \cite{ln1} that $\mathcal{H}(d,2)$ contains an open and dense subset of $\mathbb{F}{\rm{ol}}(d,2)$. Denote by $A(d)=Il_{3}(d,2)\cap \mathcal{H}(d,2).$ Observe that $A(d)$ is a Zariski dense subset of $Il_{3}(d,2)$. Concerning the set $ND(d,2)$, we have the following result, proved in \cite{ln1}.

\begin{proposition}
Let  $\mathcal G_{0} \in ND(d,2)$. Then $\#Sing(\mathcal G_{0})=d^2+d+1=N(d)$. Moreover if $Sing(\mathcal G_{0})=\{p_{1}^0,...,p_{N}^0\}$ where $p_{i}^0\neq p_{j}^0$ if $i\neq j$, then there are connected neighborhoods $U_{j} \ni p_{j}$, pairwise disjoint, and holomorphic maps $\phi_{j}:\mathcal U\subset ND(d,2) \to U_{j}$, where $\mathcal U \ni \mathcal G_{0}$ is an open neighborhood, such that for $\mathcal G \in \mathcal U$, $(Sing(\mathcal G)\cap U_{j})=\phi_{j}(\mathcal G)$  is a nondegenerate singularity.  In particular, $ND(d,2)$ is open in $\mathbb{F}{\rm{ol}}(2,d)$. Moreover, if $\mathcal G_{0} \in \mathcal{H}(d,2)$ then the two local separatrices as well as their associated eigenvalues depend analytically on $\mathcal {G}$.
\end{proposition}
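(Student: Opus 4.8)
The plan is to reduce the entire statement to a single persistence-of-nondegenerate-zeros argument for the homogeneous vector field attached to $\mathcal G$, carried out uniformly in the parameter $\mathcal G$. First I would fix $\mathcal G_0 \in ND(d,2)$ and choose a homogeneous polynomial vector field $X_0$ on $\mathbb C^3$ whose projectivization defines $\mathcal G_0$; concretely, if $\Omega_0$ is a homogeneous $1$-form of degree $d+1$ representing $\mathcal G_0^*$, one produces $X_0$ by contracting $\Omega_0$ with the standard volume form, so that $X_0$ depends linearly on $\Omega_0$ and hence holomorphically on $\mathcal G$ in a neighborhood $\mathcal U$ of $\mathcal G_0$ inside $\mathbb{F}{\rm{ol}}(d,2)$. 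The singularities of $\mathcal G$ in $\mathbb P^2$ correspond to the lines through the origin on which $X_{\mathcal G}$ is proportional to the radial (Euler) vector field $R$; equivalently, to the projective zeros of the $2$-form $X_{\mathcal G}\wedge R$, a section of a fixed bundle on $\mathbb P^2$. The degree of this section (which one computes directly from $\deg(\Omega)=d+1$ together with the Euler relation) is exactly $d^2+d+1$, which pins down $N(d)$; this is the content of the first sentence and I would cite \cite{ln1} for the counting normalization.

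Next, nondegeneracy of a singularity $p_j^0$ translates into the statement that, at a representative $\hat p_j^0 \in \mathbb C^3$, the differential of the section $X_{\mathcal G_0}\wedge R$ (or, what is cleaner, the linear part of $X_{\mathcal G_0}$ transverse to $R$) is invertible. This is precisely the hypothesis of the holomorphic implicit function theorem applied to the equation ``$X_{\mathcal G}(q)\parallel R(q)$'' in the variables $(q,\mathcal G)$: since the zero at $p_j^0$ is simple and there are exactly $N(d)$ of them counted with multiplicity, each of the $N(d)$ zeros is in fact simple and stays simple and isolated under perturbation. The implicit function theorem then delivers, for each $j$, a connected neighborhood $U_j \ni p_j^0$ (and by isolatedness of $Sing(\mathcal G_0)$ we may shrink so the $U_j$ are pairwise disjoint) together with a holomorphic map $\phi_j : \mathcal U \to U_j$ with $\phi_j(\mathcal G)$ the unique singularity of $\mathcal G$ in $U_j$; since $N(d)$ is conserved and each $U_j$ carries exactly one simple zero, $Sing(\mathcal G)\cap U_j = \phi_j(\mathcal G)$ and $Sing(\mathcal G) = \bigcup_j \phi_j(\mathcal G)$, which gives both openness of $ND(d,2)$ and the parametrization. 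Taking the local vector field $X$ near $\phi_j(\mathcal G)$ to be (a dehomogenization of) $X_{\mathcal G}$ itself shows $\phi_j(\mathcal G)$ is nondegenerate, since invertibility of $DX(\phi_j(\mathcal G))$ is an open condition already satisfied at $\mathcal G_0$.

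For the last sentence, assume $\mathcal G_0 \in \mathcal H(d,2)$. Then at each $\phi_j(\mathcal G)$ the two eigenvalues $\lambda_q^{\pm}$ of $DX$ are distinct (their ratio is nonreal, in particular $\neq 1$), so they are simple roots of the characteristic polynomial and therefore depend holomorphically on $\mathcal G$ by the implicit function theorem again; the corresponding eigenlines vary holomorphically for the same reason, and each eigenline is tangent to a unique smooth local separatrix by the Briot–Bouquet / Poincaré linearization argument in the non-resonant case (the characteristic number is non-real, hence not a positive rational). Shrinking $\mathcal U$ if necessary, everything is simultaneously holomorphic in $\mathcal G$. The main obstacle, and the only place real care is needed, is the bookkeeping that translates "nondegenerate projective singularity of a foliation" into "simple transverse zero of a section on which the implicit function theorem applies" while keeping track of the Euler relation so that the passage between the homogeneous picture on $\mathbb C^3$ and the projective picture on $\mathbb P^2$ does not introduce spurious zeros or drop the dimension count; once that dictionary is set up, each assertion is a routine application of the holomorphic implicit function theorem, and the proof is essentially that of \cite{ln1}, which I would invoke.
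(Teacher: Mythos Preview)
The paper does not give its own proof of this proposition: it is stated with the preamble ``we have the following result, proved in \cite{ln1}'' and no argument follows. Your sketch is a correct outline of the standard implicit-function-theorem proof that one finds in \cite{ln1}, and you yourself note at the end that you would invoke that reference; so there is nothing to compare against, and your proposal is consistent with what the paper does (namely, defer to \cite{ln1}).

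One small imprecision worth tightening: the phrase ``one produces $X_0$ by contracting $\Omega_0$ with the standard volume form'' is not quite right dimensionally (a $1$-form contracted with a $3$-form does not yield a vector field). What you want is either to work directly in an affine chart, where $\Omega_0 = P\,dx + Q\,dy$ gives the tangent vector field $-Q\,\partial_x + P\,\partial_y$, or in the homogeneous picture to write $\Omega_0 = i_R i_{X_0}\mu$ with $\mu$ the volume form and $R$ the Euler field, and then solve for $X_0$ modulo $R$. Either route gives the holomorphic dependence on $\mathcal G$ that your argument uses, so this is only a matter of phrasing and does not affect the validity of the sketch.
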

\vskip0.2cm

\subsection{Proof of Theorem \ref{teoa}}

 Let us first show that there exists a holomorphic foliation in $Il_3(d,2)$ with all singularities of hyperbolic-type which does not have an algebraic invariant curve different from those three invariant straight lines. Let $U_{0}=\{\mathbb C^{2},(x,y)\}$ be an affine chart of $\mathbb P^2$. 
 Let $\mathcal H$ be a holomorphic foliation in $\mathbb P^{2}$, which is given by the polynomial vector field $X_{0}(x,y)$ on $U_{0}$ :
\begin{empheq}[left=\empheqlbrace]{align}
 \dot{x}&= x(cx + ay + \lambda), \nonumber \\\nonumber 
 \dot{y}&= y(bx +ey + \mu).
\end{empheq}
We fix $a=(1-i)$, $b=1$, $c=i$, $e=1$,  $\mu=i$ and $\lambda=1.$
It is not difficult to calculate the characteristic numbers for this foliation and after a straightforward computation we conclude that all of its singularities are hyperbolic. Using a known result (Thm.$1$ pp $891$ from \cite{cln0}) we have that if $S$ is an algebraic invariant curve for the foliation $\mathcal H$, its degree must satisfy $deg(S)\leq4$. Hence the only possible remaining algebraic invariant would be a line. On the other hand, this line would have to pass through $3$ singularities. But for the constants $a$, $b$, $c$, $e$,  $\mu$ and $\lambda$ this is impossible according to Camacho-Sad's Index Theorem \cite{ln1} for the foliation $\mathcal{H}$.  Now take the holomorphic ramified map $T: \mathbb C^{2}\to\mathbb C^{2}$ given by $(x,y)\to(x^{d-1}, y^{d-1})$, $d\geq2$. The vector field $T^{\ast}X_0$ defines a holomorphic foliation which can be naturally extended to a foliation $\mathcal{G}$ in $\mathbb{P}^2$ having three invariant lines. The map $T$ can be extended to a mapping $\mathcal{T}: \mathbb P^{2}\to\mathbb P^{2}$,  $[X:Y:Z]\to[X^{d-1}:Y^{d-1}:Z^{d-1}]$ and we have that $\mathcal G=\mathcal{T}^{\ast}\mathcal{H}$. The map $\mathcal{T}$ does not produce new algebraic invariant curves. 
To finish the argument we observe that for $d\geq2$ fixed the subsets of foliations with algebraic invariant curves different from the $3$ lines is a union of algebraic subsets whose complement in $Il_3(d,2)$ is an open and dense subset.  
For each fixed $d$ we denote this set by $M(d)$. This finishes the proof of Theorem \ref{teoa}. \cqd 

\section {Branched rational maps}

Let $f : {\mathbb P^n}  \DashedArrow[->,densely dashed    ]   {\mathbb P^2}$ be a rational map and $\tilde{f}: {\mathbb C^{n+1}} \to {\mathbb C^3}$ its natural lifting in homogeneous coordinates.

The \emph{indeterminacy locus} of $f$ is, by definition, the set $I\left(f\right)=\Pi_{n}\left(\tilde{f}^{-1}\left(0\right)\right)$. Observe that the restriction $f|_{\mathbb P^n \backslash I\left(f\right)}$ is holomorphic. We characterize the set of rational maps used throughout this text as follows:
\begin{definition} We denote by $BRM\left(n,\nu,\alpha,\beta,\gamma\right)$ the set of maps \break $\left\{f: \mathbb P^n  \DashedArrow[->,densely dashed    ]   \mathbb P^2\right\}$ of degree $\nu$ given by $f=\left(F^\alpha_{0}:F^\beta_{1}:F^\gamma_{2}\right)$  where $F_{0},F_{1}$ and $F_{2}$ are homogeneous polynomials without common factors, with $deg\left(F_{0}\right).\alpha=$ $deg\left(F_{1}\right).\beta$ $=deg\left(F_{2}\right).\gamma$ $=\nu$, $\nu \geq 2$, $\left(\alpha, \beta,\gamma\right) \in \mathbb N^{3}$ such that $1< \alpha < \beta < \gamma.$\end{definition}

Let us fix some coordinates $\left(z_{0},...,z_{n} \right)$ on $\mathbb C^{n+1}$ and $\left(X,Y,Z\right)$ on $\mathbb C^{3}$ and denote by $\left(F^\alpha_{0},F^\beta_{1},F^\gamma_{2}\right)$ the components of $f$ relative to these coordinates. Let us note that the indeterminacy locus $I(f)$ is the intersection of the $3$ hypersurfaces $(F_{0}=0)$, $(F_{1}=0)$ and $(F_{2}=0)$.

\begin{definition}\label{generic} We say that $f  \in BRM\left(n,\nu,\alpha,\beta,\gamma\right)$ is $generic$ if for all $p \in$ $\tilde{f}^{-1}\left(0\right)\backslash\left\{0\right\}$ we have $dF_{0}\left(p\right)\wedge dF_{1}\left(p\right)\wedge dF_{2}\left(p\right) \neq 0.$  
\end{definition}

This is equivalent to saying that $f  \in BRM\left(n,\nu,\alpha,\beta,\gamma\right)$ is $generic$ if $I(f)$ is the transverse intersection of the $3$ hypersurfaces $(F_{0}=0)$, $(F_{1}=0)$ and $(F_{2}=0)$.  As a consequence we have that  the set $I(f)$ is smooth. For instance, if $n=3$, $f$ is generic and $deg(f)=\nu$, then by Bezout's theorem $I\left(f\right)$ consists of $\frac{\nu^{3}}{\alpha \beta \gamma}$ distinct points with multiplicity ${\alpha \beta \gamma}$. If $n=4$, then $I\left(f\right)$ is a smooth connected algebraic curve in $\mathbb P^4$ of degree $\frac{\nu^{3}}{\alpha \beta \gamma}$. In general, for $n \geq 4$, $I\left(f\right)$ is a smooth connected algebraic submanifold of $\mathbb P^n$ of degree $\frac{\nu^{3}}{\alpha \beta \gamma}$ and codimension three. 

Denote $\nabla F_{k}= (\frac{\partial F_{k}}{\partial z_{0}},...,\frac{\partial F_{k}}{\partial z_{n}})$. Consider the derivative matrix 
 $$M=\begin{bmatrix}
 \alpha\left(F_{0}^{\alpha-1}\right)\nabla F_{0} \\ \beta\left(F_{1}^{\beta-1}\right)\nabla F_{1} \\  \gamma\left(F_{2}^{\gamma-1}\right)\nabla F_{2}  
 \end{bmatrix}.$$ 
The critical set of $\tilde{f}$ is given by the points of $ \mathbb C^{n+1}\backslash \ {0}$ where rank$(M)\leq3$; it is the union of two sets. The first is given by   the set of $\left\{ Z \in \mathbb C^{n+1}\backslash \ {0} \right\}=X_{1}$ such that the rank of the following matrix 
 $$N=\begin{bmatrix}
 \nabla F_{0} \\ \nabla F_{1} \\ \nabla F_{2}  
 \end{bmatrix}$$ is smaller than $3.$ The second is the subset
$$X_2=\left\{ Z \in \mathbb C^{n+1}\backslash \left\{0\right\} | \left(F_{0}^{\alpha-1} . F_{1}^{\beta-1} . F_{2}^{\gamma-1}\right)\left(Z\right)=0 \right\}.$$

Denote $P\left(f\right)=\Pi_{n}\left(X_{1} \cup X_{2}\right)$. The set of generic maps will be denoted by $Gen\left(n,\nu,\alpha,\beta,\gamma\right)$. We state the following result whose proof is standard in algebraic geometry:
\begin{proposition} $Gen\left(n,\nu,\alpha,\beta,\gamma\right)$ is a Zariski dense subset of \break$BRM\left(n,\nu,\alpha,\beta,\gamma\right)$.
\end{proposition}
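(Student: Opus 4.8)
The plan is to exhibit $BRM(n,\nu,\alpha,\beta,\gamma)$ as the image of an irreducible quasi-projective variety, to check that the defining condition of $Gen$ cuts out a Zariski-open subset there, and finally to produce one generic map, so that openness together with irreducibility yields density. Put $d_{0}=\nu/\alpha$, $d_{1}=\nu/\beta$, $d_{2}=\nu/\gamma$, which are positive integers by the degree hypothesis, and set $V_{i}={\rm H}^{0}(\mathbb P^{n},\mathcal O_{\mathbb P^{n}}(d_{i}))$ and $P=\mathbb P(V_{0})\times\mathbb P(V_{1})\times\mathbb P(V_{2})$, an irreducible projective variety. The assignment $(F_{0},F_{1},F_{2})\mapsto(F_{0}^{\alpha}:F_{1}^{\beta}:F_{2}^{\gamma})$ exhibits $BRM(n,\nu,\alpha,\beta,\gamma)$ as the image of the Zariski-open --- hence irreducible --- subset $P^{\circ}\subset P$ of triples with no common factor, and the condition of Definition \ref{generic} pulls back to a well-defined condition on the triples $(F_{0},F_{1},F_{2})$.

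For openness I would form the incidence set
$$Z=\Big\{\,([p],[F_{0}],[F_{1}],[F_{2}])\in\mathbb P^{n}\times P\ :\ F_{0}(p)=F_{1}(p)=F_{2}(p)=0,\ \ dF_{0}(p)\wedge dF_{1}(p)\wedge dF_{2}(p)=0\,\Big\}.$$
Each equation $F_{i}(p)=0$ is multihomogeneous in the relevant sets of variables, while $dF_{0}(p)\wedge dF_{1}(p)\wedge dF_{2}(p)=0$ means the simultaneous vanishing of the $3\times 3$ minors of the matrix with rows $\nabla F_{0}(p),\nabla F_{1}(p),\nabla F_{2}(p)$, each minor being homogeneous in $p$ and of degree $1$ in each $F_{i}$ separately. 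Hence $Z$ is Zariski-closed in $\mathbb P^{n}\times P$, and since $\mathbb P^{n}$ is complete the projection $\pi\colon\mathbb P^{n}\times P\to P$ is closed, so $\pi(Z)$ is Zariski-closed in $P$. Because $\tilde f^{-1}(0)=\{F_{0}=F_{1}=F_{2}=0\}$ as sets, Definition \ref{generic} says that $f$ fails to be generic exactly when its representing triple lies in $\pi(Z)$; thus $Gen(n,\nu,\alpha,\beta,\gamma)$ is the image of $P^{\circ}\setminus\pi(Z)$.

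It remains to produce one generic map, and here I would invoke Bertini's theorem: each $\mathcal O_{\mathbb P^{n}}(d_{i})$ is very ample, so applying Bertini successively yields general members $F_{0},F_{1},F_{2}$ of the three linear systems whose common zero locus is a smooth complete intersection of codimension three. Smoothness at a point $p$ of this locus is precisely the linear independence of $\nabla F_{0}(p),\nabla F_{1}(p),\nabla F_{2}(p)$, i.e.\ $dF_{0}(p)\wedge dF_{1}(p)\wedge dF_{2}(p)\neq 0$; moreover a general such triple has no common factor, since a common factor of two of the $F_{i}$ would force their intersection to contain a hypersurface, contradicting codimension $\geq 2$. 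So this triple lies in $P^{\circ}\setminus\pi(Z)$, which is therefore a nonempty Zariski-open --- hence Zariski-dense --- subset of the irreducible variety $P^{\circ}$; since the image of a dense set under the parametrization is dense in $BRM(n,\nu,\alpha,\beta,\gamma)$, we conclude that $Gen(n,\nu,\alpha,\beta,\gamma)$ is Zariski-dense. The only point needing any care --- and the reason the statement is not purely formal --- is the interplay between the ``no common factor'' condition built into $BRM$ and the transversality condition defining $Gen$; but as just noted a general complete intersection satisfies both at once, so no real obstacle arises.
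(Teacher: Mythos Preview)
Your argument is correct and is precisely the ``standard in algebraic geometry'' proof that the paper alludes to; the paper itself states the proposition without proof, so there is nothing to compare against beyond observing that your incidence-variety/properness argument for openness and your Bertini argument for nonemptiness are exactly the expected ingredients.
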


\section{Ramified pull-back components - Generic conditions}\label{section4}

Let us fix a coordinate system $(X,Y,Z)$ on $\mathbb P^2$ and denote by $\ell_{0}$, $\ell_{1}$ and $\ell_{2}$ the straight lines that correspond to the planes $X=0$, $Y=0$ and $Z=0$ in $\mathbb C^3$, respectively.
Let us denote by $\tilde M\left(d\right)$ the subset $M\left(d\right)\cap I(d,2)$.
\begin{definition} Let $f \in Gen\left(n,\nu,\alpha,\beta,\gamma\right)$. We say that  
$\mathcal G \in M\left(d\right)$ is in generic position with respect to $f$ if $\left[Sing\left(\mathcal G\right)\cap Y_{2}\right]=\emptyset,$ where 
$$Y_{2}(f)=Y_{2}:=\displaystyle\Pi_2\left[\tilde{f}\left\{w \in \mathbb C^{n+1} |dF_{0}\left(w\right)\wedge dF_{1}\left(w\right)\wedge dF_{2}\left(w\right) = 0\right\}\right]$$ and $\ell_{0}$, $\ell_{1}$ and $\ell_{2}$ are $\mathcal G$-invariant.
\end{definition}

In this case we say that $\left(f,\mathcal{G}\right)$ is a generic pair. In particular, when we fix a map $f\in Gen(n,\nu,\alpha,\beta,\gamma)$ the set $\mathcal{A}=\left\{\mathcal{G} \in M\left(d\right) | Sing\left(\mathcal G\right) \cap Y_{2}(f)= \emptyset \right\}$ is an open and dense subset in $M(d)$ \cite{lnsc}, since $VC(f)$ {is an algebraic curve in}  ${\mathbb P^2}.$ The set $U_{1}:=\{ (f,\mathcal G) \in Gen(n,\nu,\alpha,\beta,\gamma)\times\tilde M\left(d\right)| Sing\left(\mathcal G\right)\cap Y_{2}(f)= \emptyset \}$ is an open and dense subset of $Gen(n,\nu,\alpha,\beta,\gamma)\times\tilde M\left(d\right)$. Hence the set \break $\mathcal W:=\left\{\tilde\eta_{[f,\mathcal G]}| \left(f,\mathcal G\right)\in U_{1}\right\}$ is an open and dense subset of $PB\left(\Gamma-1,\nu,\alpha,\beta,\gamma\right)$.
  
The following result, concerning the degree of a foliation given by a generic pair, is proved in the Appendix.
    \begin{proposition} \label{graupargenerico}If $\mathcal F$ comes from a generic pair, then the degree of $\mathcal F$ is $$\nu\left[\left(d-1\right)+\frac{1}{\alpha}+\frac{1}{\beta}+\frac{1}{\gamma}\right] - 2.$$ 
   \end{proposition}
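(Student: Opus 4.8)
The plan is to exhibit $\tilde\eta_{[f,\mathcal G]}$ as the \emph{reduced} homogeneous $1$-form defining $\mathcal F=f^{*}\mathcal G$ and then count degrees. Writing $\Omega=YZA\,dX+XZB\,dY+XYC\,dZ$ for $\mathcal G$ and using $d(F_{k}^{m})=mF_{k}^{m-1}\,dF_{k}$, a direct computation produces the factorization $\tilde f^{*}\Omega=F_{0}^{\alpha-1}F_{1}^{\beta-1}F_{2}^{\gamma-1}\cdot\tilde\eta_{[f,\mathcal G]}$. Euler's identity for the degree-$\nu$ homogeneous map $\tilde f$ (namely $d\tilde f_{w}(w)=\nu\,\tilde f(w)$) shows that $\tilde f^{*}\Omega$, and hence $\tilde\eta_{[f,\mathcal G]}$, satisfies \eqref{rel-1}, while \eqref{rel-2} is inherited from $\Omega$. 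The coefficients of $\tilde f^{*}\Omega$ have degree $\nu(d+2)-1$, so those of $\tilde\eta_{[f,\mathcal G]}$ have degree $\nu(d+2)-1-\big[(\alpha-1)\tfrac\nu\alpha+(\beta-1)\tfrac\nu\beta+(\gamma-1)\tfrac\nu\gamma\big]=\Gamma$. Therefore $\deg(\mathcal F)=\Gamma-1=\nu\big[(d-1)+\tfrac1\alpha+\tfrac1\beta+\tfrac1\gamma\big]-2$ \emph{as soon as} the $n+1$ coefficients of $\tilde\eta_{[f,\mathcal G]}$ have no non-constant common factor; equivalently, $F_{0}^{\alpha-1}F_{1}^{\beta-1}F_{2}^{\gamma-1}$ is exactly the g.c.d.\ of the coefficients of $\tilde f^{*}\Omega$. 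The whole proposition reduces to this reducedness statement, which is where the genericity of the pair enters.

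I would first record the inputs from genericity. Because $f\in Gen(n,\nu,\alpha,\beta,\gamma)$ and $I(f)$ is a transverse, codimension-three intersection of the $(F_{i}=0)$, these hypersurfaces are smooth, reduced and pairwise coprime (a repeated or shared factor would make $dF_{0}\wedge dF_{1}\wedge dF_{2}$ vanish along a subvariety of $\tilde f^{-1}(0)$, against Definition~\ref{generic}); in particular $\gcd_{j}(\partial F_{k}/\partial z_{j})=1$ for each $k$, the determinantal set $X_{1}=\{\operatorname{rank}N<3\}$ has codimension $\geq2$ and so contains no hypersurface, and no irreducible hypersurface is contracted by $f$ to a point (the components of $(F_{0}F_{1}F_{2}=0)$ dominate the lines $\ell_{i}$, which are curves). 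From $\mathcal G\in M(d)$ in generic position we keep: the only invariant algebraic curves of $\mathcal G$ are $\ell_{0},\ell_{1},\ell_{2}$, and $\operatorname{Sing}(\mathcal G)\cap Y_{2}(f)=\emptyset$. Finally, $\mathcal G$ being reduced of degree $d$ forces $\gcd(YZA,XZB,XYC)=1$, whence $\gcd(A,B,C)=1$, whence (using $A+B+C=0$) $\gcd(A,B)=1$, so $V(A,B)=V(A,B,C)$ is a finite subset of $\mathbb P^{2}$ contained in $\operatorname{Sing}(\mathcal G)$; likewise $X\nmid A$, so $A(0,Y,Z)\not\equiv0$.

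Now assume an irreducible homogeneous $h$ divides every coefficient of $\tilde\eta_{[f,\mathcal G]}$. \textit{Case 1: $h$ coprime to $F_{0}F_{1}F_{2}$.} At a generic $w\in\{h=0\}$ we may assume $w\notin X_{1}\cup(F_{0}F_{1}F_{2}=0)$, so $dF_{0}(w),dF_{1}(w),dF_{2}(w)$ are linearly independent and the vanishing of $\alpha F_{1}F_{2}(A\!\circ\!F)\,dF_{0}+\beta F_{0}F_{2}(B\!\circ\!F)\,dF_{1}+\gamma F_{0}F_{1}(C\!\circ\!F)\,dF_{2}$ at $w$ forces $(A\!\circ\!F)(w)=(B\!\circ\!F)(w)=(C\!\circ\!F)(w)=0$; hence $h$ divides $A\circ F$, $B\circ F$ and $C\circ F$, so $\tilde f$ maps the irreducible set $\{h=0\}\setminus\tilde f^{-1}(0)$ into the cone over the finite set $V(A,B)\subset\operatorname{Sing}(\mathcal G)$, i.e.\ $f$ contracts the hypersurface $\{h=0\}$ to a point of $\operatorname{Sing}(\mathcal G)$, contradicting genericity. \textit{Case 2: $h$ divides some $F_{i}$,} say $h=p\mid F_{0}$ (so $p\nmid F_{1}F_{2}$ by pairwise coprimality). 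The last two terms above already carry the factor $F_{0}$, so $p\mid\tilde\eta_{[f,\mathcal G]}$ forces $p\mid\alpha F_{1}F_{2}(A\!\circ\!F)\,(\partial F_{0}/\partial z_{j})$ for every $j$; since $p\nmid F_{1}F_{2}$ and $\gcd_{j}(\partial F_{0}/\partial z_{j})=1$, we get $p\mid A\circ F$, hence, reducing modulo $p$, $p\mid A(0,F_{1}^{\beta},F_{2}^{\gamma})$. Factoring the nonzero binary form $A(0,Y,Z)=\prod_{i}(\sigma_{i}Y-\tau_{i}Z)$, some $\sigma_{i}F_{1}^{\beta}-\tau_{i}F_{2}^{\gamma}$ is divisible by $p$, so on the component $\{p=0\}$ of $(F_{0}=0)$ one has $F_{0}=0$ and $F_{1}^{\beta}:F_{2}^{\gamma}=\tau_{i}:\sigma_{i}$ constant; thus $f$ contracts $\{p=0\}$ onto $q_{i}=[0:\tau_{i}:\sigma_{i}]$, and $A(0,\tau_{i},\sigma_{i})=0$ puts $q_{i}\in\operatorname{Sing}(\mathcal G)\cap\ell_{0}$ — again impossible (when $q_{i}$ is a corner one even gets $\{p=0\}\subset(F_{0}=0)\cap(F_{1}=0)$, violating pairwise coprimality). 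So $\tilde\eta_{[f,\mathcal G]}$ is reduced and the degree formula follows.

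The main obstacle is Case~2: controlling the compositions $A\circ F$, $B\circ F$, $C\circ F$ against the irreducible factors of $F_{0},F_{1},F_{2}$ is precisely where the hypotheses ``$\mathcal G\in M(d)$'' and ``$\operatorname{Sing}(\mathcal G)\cap Y_{2}(f)=\emptyset$'' must be converted into the geometric assertion that $f$ contracts no hypersurface onto a singular point of $\mathcal G$. It is also worth stressing that the genericity of $f$ is genuinely needed here: if some $F_{k}$ were not square-free, a repeated factor would divide $dF_{k}$, hence $\tilde\eta_{[f,\mathcal G]}$, and $\deg(\mathcal F)$ would drop below the stated value.
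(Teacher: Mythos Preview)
Your argument is correct but follows a genuinely different route from the paper's. The paper does \emph{not} work directly with the coefficients of $\tilde\eta_{[f,\mathcal G]}$ on $\mathbb P^n$. Instead it slices by a generic $2$-plane $\Delta\subset\mathbb P^n$ missing $I(f)$ (so that $F:=f|_\Delta:\Delta\to\mathbb P^2$ is a genuine morphism) and applies the Favre--Pereira relation $F^{*}T^{*}_{\mathcal G}=T^{*}_{\mathcal H}-\mathcal D$ in $NS_{\mathbb Z}(\mathbb P^2)$, where $\mathcal H=\mathcal F|_\Delta$ and $\mathcal D\leq\mathcal D_F$ is supported on the critical set. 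The Jacobian divisor factors as $\mathcal D_F=(\alpha-1)\tilde D_0+(\beta-1)\tilde D_1+(\gamma-1)\tilde D_2+\mathcal D'$ with $\mathcal D'=(dF_0\wedge dF_1\wedge dF_2=0)$, and the hypotheses on $\mathcal G$ (only the three lines are invariant, all singularities hyperbolic) force $\mathcal D=\mathcal D'$; the degree formula then drops out of $K_{\mathbb P^2}=F^{*}K_{\mathbb P^2}+\mathcal D_F$ by reading off degrees.

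What you do instead is prove \emph{directly} that $F_0^{\alpha-1}F_1^{\beta-1}F_2^{\gamma-1}$ is the exact g.c.d.\ of the coefficients of $\tilde f^{*}\Omega$, via a case analysis on possible irreducible common factors of $\tilde\eta_{[f,\mathcal G]}$. This is more elementary --- no slicing, no N\'eron--Severi bookkeeping, no appeal to \cite{Fa-Pe} --- and it makes transparent exactly which geometric fact is doing the work: a common factor would force $f$ to contract an irreducible hypersurface onto a point of $\operatorname{Sing}(\mathcal G)$, and the generic-pair hypotheses rule this out. The paper's approach, by contrast, packages the same obstruction into the divisor identity $\mathcal D=\mathcal D'$ and buys a cleaner one-line endgame at the cost of quoting an external result. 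One small point: your assertion that $X_1=\{dF_0\wedge dF_1\wedge dF_2=0\}$ has codimension~$\geq 2$ is not a formal consequence of Definition~\ref{generic} alone (which only constrains $X_1$ along $\tilde f^{-1}(0)$); it holds for generic $f$ in the Zariski sense, and the paper uses it implicitly too (when asserting that $Y_2(f)$ is a curve), so this is not a gap peculiar to your argument. If you want to avoid it entirely, note that in both your Cases a contracted hypersurface satisfies $dF_0\wedge dF_1\wedge dF_2=0$ generically along it, so its image lies in $Y_2(f)\cap\operatorname{Sing}(\mathcal G)=\emptyset$; this closes the loop using only the generic-pair condition.
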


Consider the set of foliations $Il_{3}\left(d,2\right)$, $d\geq 2,$ and the following map:
\begin{eqnarray*}
\Phi:BRM\left(n,\nu,\alpha,\beta,\gamma\right) \times Il_{3}\left(d,2\right) &\to&\mathbb{F}{\rm{ol}}\left(\Gamma-1,n\right)\\
\left(f,\mathcal G\right) &\to& f ^{\ast}\left(\mathcal G\right)=\Phi\left(f,\mathcal G\right).
\end{eqnarray*}  
The image of $\Phi$ can be written as:  $$\Phi\left(f,\mathcal G\right)=\left[{\alpha}F_{1}F_{2}\left(A\circ F\right) dF_{0} + {\beta}F_{0}F_{2}\left(B\circ F\right) dF_{1} + {\gamma}F_{0}F_{1}\left(C\circ F\right) dF_{2}\right].$$ Recall that $\Phi\left(f,\mathcal G\right)=\tilde\eta_{[f,\mathcal G]}$. More precisely, let $PB(\Gamma-1,\nu,\alpha,\beta,\gamma)$ be the closure in $\mathbb{F}{\rm{ol}}\left(\Gamma-1,n\right)$ of the set of foliations $\mathcal{F}$ of the form $f^*\left(\mathcal{G}\right)$, where $f\in BRM\left(n,\nu,\alpha,\beta,\gamma\right)$ and $\mathcal G \in Il_{3}(2,d).$ Since $BRM\left(n,\nu,\alpha,\beta,\gamma\right)$ and $Il_{3}(2,d)$ are irreducible algebraic sets and the map $\left(f,\mathcal{G}\right) \to f^*\left(\mathcal{G}\right) \in \mathbb{F}{\rm{ol}}\left(\Gamma-1,n\right)$ is an algebraic parametrization of $PB(\Gamma-1,\nu,\alpha,\beta,\gamma)$, we have that $PB(\Gamma-1,\nu,\alpha,\beta,\gamma)$ is an irreducible algebraic subset of $\mathbb{F}{\rm{ol}}\left(\Gamma-1,n\right)$. Moreover, the set of generic pull-back foliations $\left\{\mathcal{F}; \mathcal{F} = f^*(\mathcal{G}),\ \text{where} \ \left(f,\mathcal{G}\right)\right.$ is a generic pair$\left.\right\}$ is an open (not Zariski) and dense subset of $PB(\Gamma-1,\nu,\alpha,\beta,\gamma)$ for $\nu \geq 2$, $\left(\alpha, \beta,\gamma\right) \in \mathbb N^{3}$ such that $1< \alpha < \beta < \gamma$ and $d\geq2$. 

\section{Description of generic ramified pull-back foliations on $\mathbb P^n$}
\subsection{The Kupka set of $\mathcal{F} = f^*(\mathcal{G})$}\label{section5.1}

Let $\tau$ be a singularity of $\mathcal{G}$ and $V_{\tau}=\overline{f^{-1}(\tau)}$.  If $(f,\mathcal G)$ is a generic pair then $V_{\tau}\backslash I(f)$ is contained in the Kupka set  of $\mathcal F$. As an example we detail the case where $\tau$ is a corner, say $a=[0:0:1]$. Fix $p\in V_{\tau}\backslash I(f)$. There exist local analytic coordinate systems such that $f(x,y,z)=(x^{\alpha},y^{\beta})=(u,v)$. Suppose that $\mathcal {G}$ is represented by the 1-form $\omega$; the hypothesis of $\mathcal G$ being of Hyperbolic-type implies that we can suppose $\omega(u,v) =  \lambda_{1}u(1+R(u,v))dv - \lambda_{2}vdu,$ where $\frac{\lambda_{2}}{\lambda_{1}} \in \mathbb{C}\backslash \mathbb{R}$. We obtain $\tilde \omega(x,y)=f^{\ast}(\omega)=(x^{\alpha -1}.y^{\beta -1})(\lambda_{1}\beta x(1+R(x^{\alpha},y^{\beta})dy-\alpha\lambda_{2}ydx)=(x^{\alpha -1}.y^{\beta -1})\hat \omega(x,y)$ and so $d\hat \omega(p)\neq0$. Therefore if $p$ is as before it belongs to the Kupka-set of $\mathcal {F}$. For the other points the argumentation is analogous. This is the well known Kupka-Reeb phenomenon, and we say that $p$ is contained in the Kupka-set of $\mathcal {F}$. It is known that this local product structure is stable under small perturbations of $\mathcal F$ \cite{kupka},\cite{gln}. 
 
\subsection{Generalized Kupka and quasi-homogeneous singularities}\label{section5.2}
 In this section we will recall the quasi-homogeneous singularities of an  integrable holomorphic $1$-form.  They appear in the indeterminacy set of $f$ and play a central role in great part of the proof of Theorem  B. 
\begin{definition} Let  $\omega$ be an holomorphic  integrable 1-form defined in a neighborhood of $p \in {\mathbb{C}^3}$. We say that $p$ is a Generalized Kupka(GK) singularity of $\omega$ if $\omega(p)=0$ and either 
$d\omega(p)\neq0$ or $p$ is an isolated zero of $d\omega$.   
\end{definition}
 \par
Let $\omega$ be an integrable $1$-form in a neighborhood of $p  \in {\mathbb{C}^3}$  and $\mu$ be a holomorphic $3$-form such that $\mu({p}) \neq 0$. Then $d\omega=i_{\mathcal Z}(\mu)$ where $\mathcal Z$ is a holomorphic vector field. 
\begin{definition} We say that $p$ is a quasi-homogeneous singularity of $\omega$ if  $p$ is an isolated singularity of $\mathcal Z$ and the germ of $\mathcal Z$ at $p$ is nilpotent, that is, if $L=D\mathcal Z(p)$ then all eigenvalues of $L$ are equals to zero.
\end{definition}
 This definition is justified by the following result that can be found in \cite{ln1} or \cite{ccgl}:
\begin{theorem} \label{teo5.3}
Let $p$ be a quasi-homogeneous singularity of an holomorphic integrable 1-form $\omega.$  Then there exists two holomorphic vector fields $S$ and $\mathcal Z$ and a local chart  $U:=(x_{0},x_{1},x_{2})$  around $p$ such that  $x_{0}(p)=x_{1}(p)=x_{2}(p)=0$ and:
\begin{enumerate}
\item[(a)] $\omega$ = $\lambda i_{S}i_{\mathcal Z}(dx_0 \wedge dx_{1} \wedge dx_{2}),$ $\lambda \in \mathbb Q_{+}$ 
$d\omega= i_{\mathcal Z}(dx_{0} \wedge dx_{1} \wedge dx_{2})$ and $\mathcal Z=(rot (\omega))$;
\item[(b)] $S=p_{0}x_{0} \frac{\partial}{\partial x_{0}}+ p_{1}x_{1} \frac{\partial}{\partial x_{1}}+p_{2}x_{2} \frac{\partial}{\partial x_{2}} $, where, $p_{0}$, $p_{1}$, $p_{2}$ are positive integers with $g.c.d (p_{0},p_{1},p_{2})=1$;
\item[(c)]  $p$ is an isolated singularity for $\mathcal Z$, $\mathcal Z$ is polynomial in the chart\break $U:=(x_{0},x_{1},x_{2})$ and  $[S,\mathcal Z]=\ell \mathcal Z$, where $\ell \geq1$.
\end{enumerate}
\end{theorem}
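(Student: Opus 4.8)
The goal is a normal-form statement for a quasi-homogeneous singularity $p$ of an integrable holomorphic $1$-form $\omega$: we must produce a semisimple field $S$ with positive integer weights, a nilpotent rotational field $\mathcal Z = \mathrm{rot}(\omega)$, and coordinates in which everything is polynomial and $[S,\mathcal Z]=\ell\mathcal Z$. The plan is to start from the defining data: write $d\omega = i_{\mathcal Z}(\mu)$ with $\mu$ a local volume form, so that $\mathcal Z$ is determined up to the choice of $\mu$; integrability $\omega\wedge d\omega=0$ then reads $i_{\mathcal Z}\omega = 0$ (contracting $\omega\wedge d\omega$ with $\mathcal Z$), i.e. $\omega$ annihilates $\mathcal Z$. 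By hypothesis $\mathcal Z$ has an isolated singularity at $p$ with $D\mathcal Z(p)$ nilpotent. First I would invoke the formal/analytic theory of nilpotent vector fields: a germ of holomorphic vector field with nilpotent linear part and isolated singularity admits, after an analytic change of coordinates, a decomposition adapted to a $\mathbb{C}^*$-action — this is where the quasi-homogeneity enters, and it is the classical input (the Malgrange–Mattei–Moussu type results, or the treatment in \cite{ln1}, \cite{ccgl}) that lets us find coordinates $(x_0,x_1,x_2)$ and positive integers $p_0,p_1,p_2$ with $\gcd=1$ such that $\mathcal Z$ becomes quasi-homogeneous of some positive degree with respect to the weights $(p_0,p_1,p_2)$. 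This simultaneously makes $\mathcal Z$ polynomial in the chart and produces the Euler-type field $S = \sum p_i x_i \partial_{x_i}$ with $[S,\mathcal Z]=\ell\mathcal Z$, $\ell\geq 1$ being the quasi-homogeneous degree shift.

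Next I would recover $\omega$ itself. Since $i_{\mathcal Z}\omega=0$ and $\mathcal Z$ has an isolated zero, in dimension three the kernel of $\omega$ at generic points is spanned by $\mathcal Z$ together with one more direction; the standard move is to show $\omega$ must be proportional to $i_S i_{\mathcal Z}(\mu)$ for the volume form $\mu = dx_0\wedge dx_1\wedge dx_2$ in the new chart. Indeed $i_S i_{\mathcal Z}\mu$ is a $1$-form annihilated by $\mathcal Z$ (since $i_{\mathcal Z}i_S i_{\mathcal Z}\mu = -i_S i_{\mathcal Z}i_{\mathcal Z}\mu = 0$), and one checks it is also annihilated by $S$, defining the same codimension-one distribution as $\omega$ away from the singular locus; hence $\omega = g\cdot i_S i_{\mathcal Z}\mu$ for some holomorphic function $g$ with $g(p)\neq 0$ — and then absorbing $g$ (using the homogeneity and the freedom in $\mu$, i.e. replacing $\mu$ by $g^{-1}\mu$ while keeping $d\omega = i_{\mathcal Z}\mu$ consistent via the relation $d(i_S i_{\mathcal Z}\mu)$) forces $g$ to be a positive rational constant $\lambda$. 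The identities $d\omega = i_{\mathcal Z}(dx_0\wedge dx_1\wedge dx_2)$ and $\mathcal Z = \mathrm{rot}(\omega)$ in part (a) then follow by a direct computation using $L_S\mu = (p_0+p_1+p_2)\mu$, $L_{\mathcal Z}\mu = (\mathrm{div}\,\mathcal Z)\mu$ and the Cartan formula $d\,i_S i_{\mathcal Z}\mu = L_S i_{\mathcal Z}\mu - i_S d\,i_{\mathcal Z}\mu$, matching degrees to pin down the constant $\lambda\in\mathbb{Q}_+$.

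The main obstacle, and the step I would spend the most care on, is the analytic normalization of the nilpotent field $\mathcal Z$ — i.e. proving that an integrable $1$-form whose rotational has an isolated nilpotent singularity is genuinely quasi-homogeneous in suitable analytic coordinates, not merely formally. This is the substantive content and is exactly what is attributed to \cite{ln1} and \cite{ccgl}; in a self-contained write-up one would either cite it as a black box or reproduce the argument, which combines the Poincaré–Dulac-type formal normal form with a convergence argument exploiting the extra rigidity coming from integrability (the existence of the first integral / the $S$-action). Everything after that — extracting $S$, checking $\gcd(p_0,p_1,p_2)=1$ by rescaling weights, verifying $[S,\mathcal Z]=\ell\mathcal Z$ with $\ell\geq 1$, and the bookkeeping of Cartan-calculus identities in part (a) — is routine once the normalized chart is in hand. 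I would therefore organize the proof as: (i) reduce integrability to $i_{\mathcal Z}\omega=0$; (ii) quote/establish the quasi-homogeneous normal form for $\mathcal Z$, producing $(x_0,x_1,x_2)$, the weights $p_i$, and $S$; (iii) reconstruct $\omega = \lambda\, i_S i_{\mathcal Z}\mu$ and verify (a), (b), (c) by degree-matching.
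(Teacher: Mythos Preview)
The paper does not prove Theorem~\ref{teo5.3}: it is quoted with the sentence ``the following result \ldots\ can be found in \cite{ln1} or \cite{ccgl}'' and no argument is supplied. There is therefore nothing in the paper itself to compare your proposal against.

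Taken on its own merits, your outline has the right architecture and you correctly isolate the analytic quasi-homogeneous normalization as the substantive input. There is, however, a real gap in your step~(iii). You check that the $1$-form $i_S i_{\mathcal Z}\mu$ is annihilated by both $S$ and $\mathcal Z$, and then assert it ``defines the same codimension-one distribution as $\omega$ away from the singular locus''. That conclusion requires $i_S\omega=0$, which you never establish: integrability gives $i_{\mathcal Z}\omega=0$, and the normal form of $\mathcal Z$ gives $[S,\mathcal Z]=\ell\mathcal Z$, but neither of these forces $S$ into $\ker\omega$. Normalizing $\mathcal Z$ alone does not automatically make $\omega$ quasi-homogeneous in the same chart. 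One still has to argue that $L_S\omega=m\omega$ with $m=\ell+p_0+p_1+p_2$ (equivalently, that the closed form $L_S\omega-m\omega$ is not just exact but zero), and then that the function $g=i_S\omega$, which then satisfies $S(g)=mg$ and $\mathcal Z(g)=0$, actually vanishes. This last point is not automatic either, since a quasi-homogeneous vector field with an isolated zero can admit nonconstant quasi-homogeneous first integrals; some further use of the integrability of $\omega$ (or a more careful choice of chart simultaneously adapted to $\omega$, not only to $\mathcal Z$) is needed. Until this is filled in, the identification $\omega=g\cdot i_S i_{\mathcal Z}\mu$ with $g$ a nonvanishing unit, and the subsequent reduction of $g$ to a positive rational constant, do not follow from what you have written.
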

 \begin{definition}
 Let $p$ be a quasi-homogeneous singularity of $\omega.$ We say that it is of the type $(p_{0}:p_{1}:p_{2};\ell)$, if  for some local chart and vector fields $S$ and $\mathcal Z$  the properties $(a),(b)$ and $(c)$ of the {\textrm{Theorem}} \ref{teo5.3} are satisfied.
 \end{definition}

 We can now state the stability result, whose proof can be found in\break \cite{ccgl}:
 \begin{proposition}\label{prop5.5} Let $(\omega_{s})_{s \in \Sigma}$ be a holomorphic family of integrable $1$-forms defined in a neighborhood of a compact ball  $B= \{ {z \in {\mathbb{C}^3} ; |z|} \leq \rho \}$, where $\Sigma$ is a neighborhood of $0 \in {\mathbb{C}^k}.$  Suppose that all singularities of $\omega_{0}$ in $B$ are $GK$ and that $sing(d\omega_{0})\subset int(B)$. Then there exists $\epsilon >0$ such that if $s \in B(0,\epsilon)\subset\Sigma,$ then all singularities of $\omega_{s}$ in $B$ are GK. Moreover, if $0 \in B$ is a quasi-homogeneous singularity of type $(p_{0}:p_{1}:p_{2};\ell)$ then there exists a holomorphic map $B(0,\epsilon) \ni s \mapsto z(s)$, such that $z(0)=0$ and $z(s)$ is a $GK$ singularity of  $\omega_{s}$ of the same type (quasi-homogeneous of the type $(p_{0}:p_{1}:p_{2};\ell)$, according to the case).
 \end{proposition}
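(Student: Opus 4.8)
The plan is to reduce the assertion to a local problem around the finitely many points where the generalized Kupka ($GK$) property could degenerate, and then to treat the Kupka locus and the quasi-homogeneous locus by different arguments.

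Write $\mathcal Z_s:=\mathrm{rot}(\omega_s)$, so $d\omega_s=i_{\mathcal Z_s}(\mu)$ for a fixed non-vanishing holomorphic $3$-form $\mu$ near $B$; this depends holomorphically on $s$. Set $P_0:=\mathrm{sing}(\omega_0)\cap\mathrm{sing}(d\omega_0)$. By the $GK$ hypothesis every point of $P_0$ has $\omega_0=d\omega_0=0$ there and hence is an isolated zero of $d\omega_0$, so $P_0=\{p_1,\dots,p_r\}$ is finite, with $P_0\subset\mathrm{sing}(d\omega_0)\subset\mathrm{int}(B)$; every other singular point of $\omega_0$ in $B$ is a Kupka point. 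First I would record the elementary upper semicontinuity of the singular set: since $(s,z)\mapsto\omega_s(z)$ is continuous and $B$ is compact, any accumulation point of singular points of $\omega_{s_n}$ with $s_n\to 0$ is a singular point of $\omega_0$; hence for every open $W\supset\mathrm{sing}(\omega_0)\cap B$ there is $\epsilon_1>0$ such that $\mathrm{sing}(\omega_s)\cap B\subset W$ whenever $|s|<\epsilon_1$. It therefore suffices to verify that every singularity of $\omega_s$ in a conveniently chosen $W$ is $GK$.

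Choose pairwise disjoint balls $B(p_i,\delta_i)$ with $\overline{B(p_i,\delta_i)}\subset\mathrm{int}(B)$ so small that $\mathrm{sing}(d\omega_0)\cap\overline{B(p_i,\delta_i)}=\{p_i\}$ and $\mathcal Z_0$ does not vanish on $\partial B(p_i,\delta_i)$. The set $K:=\big(\mathrm{sing}(\omega_0)\cap B\big)\setminus\bigcup_i B(p_i,\delta_i)$ is compact and, by the description of $P_0$, disjoint from the closed analytic set $\mathrm{sing}(d\omega_0)$; using also $\mathrm{sing}(d\omega_0)\subset\mathrm{int}(B)$ one picks an open $W_K\supset K$ with $\overline{W_K}$ compact, $\overline{W_K}\cap\{p_1,\dots,p_r\}=\emptyset$ and $d\omega_0\neq 0$ on $\overline{W_K}$. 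Put $W:=W_K\cup\bigcup_i B(p_i,\delta_i)$. Now, for $s$ small: on $\overline{W_K}$ compactness and continuity of $(s,z)\mapsto d\omega_s(z)$ give $d\omega_s\neq 0$, so every singularity of $\omega_s$ in $W_K$ is a Kupka point, hence $GK$ (and its local product structure persists, as used in Section~\ref{section5.1}, by \cite{kupka},\cite{gln}). On each $B(p_i,\delta_i)$ the standard persistence-of-zeros argument applies: the number of zeros of $\mathcal Z_s$ in $B(p_i,\delta_i)$ counted with multiplicity equals the degree of $\mathcal Z_s/\|\mathcal Z_s\|$ on $\partial B(p_i,\delta_i)$ (well defined since $\mathcal Z_s\neq 0$ there for $s$ small), an integer that is continuous, hence locally constant, in $s$; so $\mathcal Z_s$ has only finitely many zeros in $B(p_i,\delta_i)$. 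Consequently, if $q\in\mathrm{sing}(\omega_s)\cap B(p_i,\delta_i)$ then either $d\omega_s(q)\neq 0$ (Kupka) or $\mathcal Z_s(q)=0$, in which case $q$ is one of these finitely many zeros and hence an isolated zero of $d\omega_s$; in both cases $q$ is $GK$. This proves the first assertion.

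For the ``moreover'' part I would work in the chart $U=(x_0,x_1,x_2)$ of Theorem~\ref{teo5.3} centred at the quasi-homogeneous singularity $0$, where $\omega_0=\lambda\, i_S i_{\mathcal Z_0}(dx_0\wedge dx_1\wedge dx_2)$, $S=\sum_j p_j x_j\partial_{x_j}$, $\mathcal Z_0$ is quasi-homogeneous of weighted degree $\ell$ with an isolated zero at $0$ and $[S,\mathcal Z_0]=\ell\,\mathcal Z_0$. The aim is to reinstate this weighted-homogeneous structure for $\omega_s$: to produce a holomorphic family $s\mapsto\psi_s$ of germs of biholomorphisms, $\psi_0=\mathrm{id}$, such that $\psi_s^{\ast}\omega_s$ is again quasi-homogeneous with respect to $S$ of the same weighted degree (equivalently, $\mathrm{rot}(\psi_s^\ast\omega_s)$ is polynomial quasi-homogeneous of weighted degree $\ell$ with an isolated zero). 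The plan is a path (Moser-type) method: differentiating the desired identity in $s$ produces at each stage a homological equation of the form $[S,\,\cdot\,]=(\,\cdot\,)$, the resulting obstruction being concentrated in finitely many resonant graded pieces and vanishing because $\mathcal Z_0$ has an isolated zero (so the associated quasi-homogeneous module has finite length and $\mathrm{ad}_S$ is invertible off the resonant part). Granting this, $z(s):=\psi_s(0)$ is holomorphic in $s$, $z(0)=0$, and $z(s)$ is a singularity of $\omega_s$ which is quasi-homogeneous of the same type $(p_0:p_1:p_2;\ell)$; it is $GK$ by the first part. The hard part is exactly this deformation-stability of the quasi-homogeneous normal form of Theorem~\ref{teo5.3} — the vanishing of the obstruction to re-homogenizing $\omega_s$ — which is the technical heart of the proposition and is carried out in \cite{ccgl}; the reduction and the first assertion above are comparatively routine.
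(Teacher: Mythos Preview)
The paper does not give a proof of this proposition at all: it is stated as a result ``whose proof can be found in \cite{ccgl}'' and is used as a black box thereafter. So there is no in-paper argument to compare your proposal against. Your first part (persistence of the $GK$ property) is a clean and correct reduction: upper semicontinuity of $\mathrm{sing}(\omega_s)$, nonvanishing of $d\omega_s$ on a compact collar away from the isolated zeros of $d\omega_0$, and finiteness of the zeros of $\mathcal Z_s$ in each small ball (note that the cleanest justification here is that $\{\mathcal Z_s=0\}\cap\overline{B(p_i,\delta_i)}$ is a compact analytic subset of $\mathbb C^3$, hence finite, rather than the degree count alone). For the ``moreover'' part you outline a Moser/path method to re-homogenize $\omega_s$ and explicitly defer the key vanishing-of-obstructions step to \cite{ccgl}; that is exactly the technical content the paper is importing by citation, so your proposal and the paper are aligned in spirit, with your write-up supplying the surrounding soft analysis that the paper omits.
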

 
 Let us describe  $\mathcal{F} = f^*(\mathcal{G})$ in a neighborhood of a point $p \in I(f).$
\noindent It is easy to show that there exists a local chart $(U,(x_0,x_1,x_2,y)\in \mathbb C^3\times \mathbb C^{n-2})$  around $p$ such that the lifting $\tilde f$ of $f$ is of the form $\tilde f|_{U}=(x^{\alpha} _{0},x^{\beta} _{1},x^{\gamma} _{2}):U \to {\mathbb{C}^3}$. In particular $\mathcal {F}|_{U(p)}$ is represented by the $1$-form 
\begin{equation}\label{eta}
\eta (x_{0},x_{1},x_{2},y) = \alpha. x_{1.}x_{2}.A(x^{\alpha} _{0},x^{\beta} _{1},x^{\gamma} _{2})dx_{0} 
+ \beta. x_{0.}x_{2}.B(x^{\alpha} _{0},x^{\beta} _{1},x^{\gamma} _{2})dx_{1}\end{equation}\begin{equation*} + \gamma. x_{0.}x_{1}.C(x^{\alpha} _{0},x^{\beta} _{1},x^{\gamma} _{2})dx_{2}. 
\end{equation*}

Let us now obtain the vector field $S$ as in Theorem \ref{teo5.3}. Consider the radial vector field $R=X \frac{\partial}{\partial X}+ Y \frac{\partial}{\partial Y}+ Z \frac{\partial}{\partial Z}$. Note that in the coordinate system above it transforms into
$$ \frac{1}{\alpha}x_{0}  \frac{\partial}{\partial x_{0}}+\frac{1}{\beta}x_{1}  \frac{\partial}{\partial x_{1}}+\frac{1}{\gamma}x_{2}  \frac{\partial}{\partial x_{2}}.$$ Since the eigenvalues of $S$ have to be integers, after a multiplication by ${\alpha\beta\gamma}$ we obtain $$S=({\beta}{\gamma}) x_{0} \frac{\partial}{\partial x_{0}}+ ({\alpha}{\gamma})x_{1} \frac{\partial}{\partial x_{1}}+({\alpha}{\beta}) x_{2} \frac{\partial}{\partial x_{2}}.$$

\begin{remark}
We observe that if g.c.d$(\beta\gamma,\alpha\gamma,\alpha\beta)=\theta\neq1$ we replace $(\beta\gamma,\alpha\gamma,\alpha\beta)$ by $\frac{(\beta\gamma,\alpha\gamma,\alpha\beta)}{\theta}$ and we repeat with minor modifications the same arguments. Hence we can suppose for simplicity that g.c.d$(\beta\gamma,\alpha\gamma,\alpha\beta)=1$. 
  \end{remark}
\begin{lemma} \label{lema5.6}If $p\in I(f)$ then $p$ is a quasi-homogeneous singularity of $\eta$.    \end{lemma}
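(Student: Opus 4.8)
The plan is to verify, directly from the explicit local expression of $\eta$ in \eqref{eta}, that at a point $p\in I(f)$ the associated rotational vector field $\mathcal Z=\mathrm{rot}(\eta)$ has $p$ as an isolated singularity with nilpotent linear part, which is precisely the definition of a quasi-homogeneous singularity. First I would record that in the chart $(U,(x_0,x_1,x_2,y))$ one has $\tilde f|_U=(x_0^\alpha,x_1^\beta,x_2^\gamma)$ and $I(f)\cap U=\{x_0=x_1=x_2=0\}$, so after shrinking $U$ we may assume $\eta$ is the pull-back by $(x_0^\alpha,x_1^\beta,x_2^\gamma)$ of the $1$-form $\Omega=YZA\,dX+XZB\,dY+XYC\,dZ$ with $A+B+C=0$. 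Since $\eta$ does not involve $dy$ and its coefficients do not depend on $y$ in an essential way (the $y$-direction is a product factor coming from the fibre of $I(f)$; more precisely $\eta$ is constant along $y$), the singular set of $\eta$ near $p$ is $\{x_0=x_1=x_2=0\}$ times a point, and one checks $d\eta$ vanishes only on that same set, so $p$ is an isolated zero of $d\eta$ — hence $p$ is a GK singularity.

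Next I would compute $d\eta = i_{\mathcal Z}(\mu)$ for a suitable volume form $\mu$ (in three variables $dx_0\wedge dx_1\wedge dx_2$, extended trivially in $y$) and read off $\mathcal Z$ explicitly. The key algebraic point is that the three coefficients of $\eta$ are
\[
P_0=\alpha x_1 x_2 (A\circ\Psi),\quad P_1=\beta x_0 x_2 (B\circ\Psi),\quad P_2=\gamma x_0 x_1 (C\circ\Psi),
\]
where $\Psi=(x_0^\alpha,x_1^\beta,x_2^\gamma)$, and one has the Euler-type relation that $S$-applied to each monomial structure is homogeneous: with $S=\beta\gamma\,x_0\partial_{x_0}+\alpha\gamma\,x_1\partial_{x_1}+\alpha\beta\,x_2\partial_{x_2}$ constructed above from the radial field, each $P_j\,dx_j$ is an eigenvector-type term, so $L_S\eta = c\,\eta$ for an explicit positive constant $c$ (this uses $A+B+C=0$ only to know $\eta$ is closed modulo the product factor, i.e. integrable). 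From $L_S\eta=c\,\eta$ and $d\eta=i_{\mathcal Z}\mu$ one gets $L_S\mathcal Z=(c-\operatorname{tr})\mathcal Z=\ell\,\mathcal Z$, i.e. $[S,\mathcal Z]=\ell\mathcal Z$ with $\ell\ge 1$; since $S$ is a semisimple vector field with positive integer eigenvalues and $\mathcal Z$ is a nonzero eigenvector of $\mathrm{ad}_S$ with positive eigenvalue $\ell$, every monomial appearing in $\mathcal Z$ raises the $S$-weight, which forces $D\mathcal Z(p)$ to be nilpotent. That establishes all three conditions $(a)$–$(c)$ of Theorem \ref{teo5.3}, so $p$ is quasi-homogeneous.

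I expect the main obstacle to be the bookkeeping that shows $p$ is an \emph{isolated} zero of $d\eta$ (equivalently of $\mathcal Z$) rather than merely a zero: one must use genericity of the pair $(f,\mathcal G)$ and the hyperbolicity of the singularities of $\mathcal G$ to rule out extra components of $\mathrm{sing}(d\eta)$ through $p$ coming from the vanishing of $A$, $B$, or $C$ along the lines $\ell_i$. Concretely, near a corner of $\mathcal G$ the functions $A,B,C$ each vanish to first order along exactly the two lines meeting there, and the product structure in the $x_i$'s must be analyzed so that the common zero locus of the three coefficients of $d\eta$ reduces to $\{x_0=x_1=x_2=0\}$; this is where the assumption that $\mathcal G\in M(d)$ (no extra invariant curves, all singularities hyperbolic) enters, via the local normal form $\omega=\lambda_1 u(1+R)dv-\lambda_2 v\,du$ already used in Section \ref{section5.1}. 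Once that isolatedness is in hand, the eigenvalue/weight argument above is routine linear algebra, and the identification of the type $(p_0:p_1:p_2;\ell)$ follows by dividing the weights of $S$ by their gcd as in the Remark.
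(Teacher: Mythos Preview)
Your proposal is essentially the paper's own argument: compute $L_S\eta=m\eta$ to exhibit the quasi-homogeneous structure, read off $\mathcal Z$ from $d\eta=i_{\mathcal Z}\mu$, check that $D\mathcal Z(0)$ is nilpotent (the paper observes directly that it is the zero matrix, which your weight argument also yields since $\ell=\alpha\beta\gamma(d-1)$ exceeds any difference of the weights $\beta\gamma,\alpha\gamma,\alpha\beta$), and then use the Kupka description of Section~\ref{section5.1} together with genericity of the pair to conclude that $0$ is an isolated zero of $d\eta$. Two small points of presentation: first, the singular set of $\eta$ near $p$ is \emph{not} just $\{x_0=x_1=x_2=0\}$---it contains all the curves $V_\tau$ through $p$---so your early sentence to that effect should be dropped (you correct this yourself in the ``obstacle'' paragraph); second, Theorem~\ref{teo5.3} is a \emph{consequence} of being quasi-homogeneous, not a criterion, so what you actually need to verify is the definition (isolated singularity of $\mathcal Z$ with nilpotent linear part), which you do along the way.
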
 
  \begin{proof}
First of all note that $i_{S}\eta=0$.
Let us calculate $L_{S}\eta$, where $L$ denotes the Lie derivative. By standard computations we have that $L_{S}\eta=m\eta,$ where $m=[(\beta\gamma+\alpha\gamma+\alpha\beta)+(\alpha\beta\gamma)(d-1)]$. This implies that the singular set of $\eta$ is invariant under the flow of $S$.  
The vector field $\mathcal Z$ such that $\eta=i_{S}i_{\mathcal Z}(dx_0 \wedge dx_{1} \wedge dx_{2})$ is given by
$$\mathcal Z=\mathcal Z_{0}(x_{0},x_{1},x_{2}) \frac{\partial}{\partial x_{0}}+ \mathcal Z_{1}(x_{0},x_{1},x_{2}) \frac{\partial}{\partial x_{1}}+ \mathcal Z_{2}(x_{0},x_{1},x_{2}) \frac{\partial}{\partial x_{2}}$$
where $\mathcal Z_{i} (x_{0},x_{1},x_{2})= x_{i}.\tilde{A}_{i}(x^{\alpha} _{0},x^{\beta} _{1},x^{\gamma} _{2})$.
The polynomials $\tilde A_{i}(X,Y,Z)$ are homogeneous of degree $(d-1)$ and they are not unique. We must show that the origin is an isolated singularity of $\mathcal Z$ and all eigenvalues of $D\mathcal Z(0)$ are $0$. By straightforward computation we find that the Jacobian matrix $D\mathcal Z(0)$ is the null matrix, hence all its eigenvalues are null. Since all singular curves of $\mathcal F$ in a neighborhood $(U,(x_0,x_1,x_2))$ of $0$ are of Kupka type, as proved in Section \ref{section5.1}, it follows that the origin is an isolated sigularity of $\mathcal Z$. Note that the unique singularities of $\eta$ in the neighborhood $(U,(x_0,x_1,x_2))$ of $0$ come from $\tilde f^\ast Sing(\mathcal G)$; this follows from the fact that  $Sing\left(\mathcal G\right) \cap (VC(f) \backslash \ell_{0} \cup \ell_{1} \cup \ell_{2})= \emptyset$.
  On the other hand we have seen that $(f)^{-1}(sing(\mathcal G))\backslash I(f)$ is contained in the Kupka set of $\mathcal{F}$. Hence the point $p$  is an isolated singularity of $d\eta$ and thus an isolated singularity of $\mathcal Z$.
 \end{proof} 
 
 As a consequence, in the case $n=3$ any $p\in I(f)$ is a quasi-homogeneous singularity of type $\left [{\beta}{\gamma}:{\alpha}{\gamma}:{\alpha}{\beta}  \right ]$. In the case $n\geq4$ the argument is analogous. Moreover, in this case there will be a local structure product near any point $p\in I(f)$. In fact in the case $n\geq4$ we have:
\begin{cor} Let $(f,\mathcal G)$ be a generic pair. Let $p \in I(f)$ and $\eta$ an 1-form defining $\mathcal F$ in a neighborhood of $p$. Then there exists a 3-plane $\Pi \subset  \mathbb C^{n}$ such that $d(\eta)|_{\Pi}$ has an isolated singularity at $0 \in{\Pi}. $
\end{cor}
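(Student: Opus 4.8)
The plan is to read the conclusion off the explicit local model \eqref{eta} together with Lemma~\ref{lema5.6}. The key observation is that, in the adapted chart $(U,(x_0,x_1,x_2,y))$ constructed just before Lemma~\ref{lema5.6}, the $1$-form $\eta$ of \eqref{eta} involves only $dx_0,dx_1,dx_2$ and has coefficients depending only on $(x_0,x_1,x_2)$; thus $\mathcal F$ carries a local product structure near $p\in I(f)$, and the statement about $d(\eta)|_{\Pi}$ should follow by restricting to the three-dimensional factor, where everything collapses to the situation already handled by Lemma~\ref{lema5.6} in dimension three.

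Concretely, I would argue as follows. Let $\pi\colon U\to\mathbb C^3$ be the projection $(x_0,x_1,x_2,y)\mapsto(x_0,x_1,x_2)$ in this chart, and let $\eta_3$ be the $1$-form on a neighbourhood of $0\in\mathbb C^3$ defined by the right-hand side of \eqref{eta}, now viewed as a genuine $1$-form in the three variables $x_0,x_1,x_2$; then $\eta=\pi^{*}\eta_3$ and hence $d\eta=\pi^{*}(d\eta_3)$. Take $\Pi$ to be the $3$-plane $\{y=0\}\subset U$ through $p=0$. Since $\pi$ restricts to the identity on $\Pi$, pulling back by the inclusion $\iota\colon\Pi\hookrightarrow U$ gives $d(\eta)|_{\Pi}=\iota^{*}\pi^{*}(d\eta_3)=(\pi\circ\iota)^{*}(d\eta_3)=d\eta_3$ under the identification $\Pi\cong\mathbb C^3$. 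It then remains to invoke the content of Lemma~\ref{lema5.6}: its proof produces a polynomial vector field $\mathcal Z$ on $\mathbb C^3$ with $d\eta_3=i_{\mathcal Z}(dx_0\wedge dx_1\wedge dx_2)$ and with $0$ an isolated singularity of $\mathcal Z$; since $dx_0\wedge dx_1\wedge dx_2$ is nowhere vanishing, this says precisely that $d\eta_3$, and therefore $d(\eta)|_{\Pi}$, has an isolated zero at $0\in\Pi$. (When $n=3$ one simply takes $\Pi=\mathbb C^3$ and there is nothing to restrict.)

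I do not anticipate a genuine obstacle: the corollary is essentially a precise restatement of Lemma~\ref{lema5.6} in the case $n\ge 4$, the only non-formal inputs being the existence of the adapted chart of \eqref{eta} around any $p\in I(f)$ (already asserted, and built from the normal form $\tilde f|_U=(x_0^{\alpha},x_1^{\beta},x_2^{\gamma})$) and the dimension-three conclusion of that lemma. The one point to state carefully is the bookkeeping of coordinates: one must work in the adapted coordinates $(x_0,x_1,x_2,y)$ of \eqref{eta}, not in the homogeneous coordinates on $\mathbb P^n$, so that $\Pi=\{y=0\}$ is genuinely a linear $3$-plane on which $\eta$ restricts to the $1$-form $\eta_3$ in $x_0,x_1,x_2$; this $\Pi$ is exactly the fibre-complement of the local product structure of $\mathcal F$ at $p$ alluded to just above the statement, which is all that is needed in the applications to Theorem~\ref{teob}.
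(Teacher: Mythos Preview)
Your argument is correct and is precisely what the paper means by ``Immediate from the local product structure'': you have simply unpacked that sentence by exhibiting $\eta=\pi^{*}\eta_3$ in the adapted chart of \eqref{eta}, taking $\Pi=\{y=0\}$, and invoking Lemma~\ref{lema5.6} for the three-dimensional factor. There is no difference in approach, only in level of detail.
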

\begin{proof} Immediate from the local product structure.
\end{proof}

\subsection{Deformations of the singular set of ${\mathcal F}_0= f_{0}^\ast (\mathcal G_{0})$}
In this section we give some auxiliary lemmas which assist in the proof of Theorem \ref{teob}. We have constructed an open and dense subset $\mathcal W$ inside {$PB(\Gamma-1,\nu,\alpha,\beta,\gamma)$} containing the generic pull-back foliations.  We will show that for any rational foliation $\mathcal{F}_{0} \in \mathcal W$ and any germ of a holomorphic family of foliations $(\mathcal{F}_{t})_{t \in (\mathbb{C},0)}$ such that $\mathcal{F}_{0}=\mathcal{F}_{t=0}$ we have $\mathcal{F}_{t} \in PB(\Gamma-1,\nu,\alpha,\beta,\gamma)$ for all $t \in (\mathbb{C},0).$
\begin{lemma} \label{Lemma5.8} There exists a germ of isotopy of class $C^{\infty}$, $(I(t))_{t \in (\mathbb {C},0)}$ having the following properties: 
   \begin{enumerate}
\item[(i)]  $I(0)=I(f_{0})$ and $I(t)$ is algebraic and smooth of codimension $3$ for all  ${t \in ({\mathbb {C}}, 0)}.$
 \item[(ii)] For all $p \in I(t)$, there exists a neighborhood $U(p,t)=U$ of $p$ such that $\mathcal{F}_{t}$ is equivalent to the product of a regular foliation of codimension $3$ and a singular foliation $\mathcal F_{p,t}$ of codimension one given by the 1-form $\eta_{p,t}$.
 \end{enumerate}
 \begin{remark}
The family of $1$-forms $\eta_{p,t}$, represents the quasi-homogeneous foliation given by the Proposition \ref{prop5.5}. 
  \end{remark}
   \begin{proof} See \cite[lema 2.3.2, p.81]{ln}.
 \end{proof}

\end{lemma}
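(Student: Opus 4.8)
The strategy is to work locally around the compact indeterminacy locus $I(f_0)$, transport the stability statement of Proposition \ref{prop5.5} to each point of it (in the form of a parametrized/transverse version of that proposition), and then patch the resulting local data into a global family of submanifolds and a global isotopy; the algebraicity of each slice will come at the end from Chow's theorem. So first I would record that, since $\mathcal F_0\in\mathcal W$ is a generic pull-back with $f_0\in Gen(n,\nu,\alpha,\beta,\gamma)$, Lemma \ref{lema5.6} and the Corollary following it say that at every $p\in I(f_0)$ the local $1$-form $\eta_p$ representing $\mathcal F_0$ near $p$ has a quasi-homogeneous singularity of type $[\beta\gamma:\alpha\gamma:\alpha\beta;\ell]$ with $\mathrm{sing}(d\eta_p)=\{p\}$ locally, and, when $n\ge 4$, a local product decomposition $\mathcal F_0|_{U_p}\cong\pi_p^{*}\mathcal H_0$ with $\pi_p\colon U_p\to(\mathbb C^{3},0)$ a submersion and $\mathcal H_0$ the transverse quasi-homogeneous germ. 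By compactness of $I(f_0)$ I cover it by finitely many such product charts $U_1,\dots,U_N$.

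Next, on each $U_j$ I present the germ $(\mathcal F_t)_{t\in(\mathbb C,0)}$ by a holomorphic family of integrable $1$-forms and, using $\pi_j$, pass to the induced holomorphic family of transverse integrable $1$-forms on a ball $B\subset\mathbb C^{3}$ whose $t=0$ member has all its singularities in $B$ of type GK with $\mathrm{sing}(d\eta_0)\subset\mathrm{int}(B)$. Proposition \ref{prop5.5} then gives $\epsilon_j>0$ and, for $|t|<\epsilon_j$, a holomorphic point $z_j(t)$ ($z_j(0)=p$) which is a quasi-homogeneous singularity of the transverse form of the same type, all other singularities in $B$ staying GK; together with the persistence of the transverse product structure (this is exactly the content cited as \cite[lema 2.3.2, p.81]{ln}, and is the ``GK analogue'' of the rigidity of the Kupka phenomenon used in Section \ref{section5.1}) this produces, for $|t|<\epsilon:=\min_j\epsilon_j$, a codimension-$3$ complex submanifold $I_j(t)\subset U_j$ along which $\mathcal F_t$ has the required product structure, with $I_j(0)=I(f_0)\cap U_j$ and $I_j(t)$ depending holomorphically on $t$. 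On overlaps the $I_j(t)$ coincide, since each is the intrinsically defined locus of points near $I(f_0)$ at which $\mathcal F_t$ carries a GK singularity of type $[\beta\gamma:\alpha\gamma:\alpha\beta;\ell]$, so they glue to a compact complex submanifold $I(t)\subset\mathbb P^{n}$ of codimension $3$, varying holomorphically in $t$, with $I(0)=I(f_0)$. This already yields (ii) and the smoothness and codimension assertions of (i).

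To finish (i) I argue that $I(t)$ is algebraic and that the family $(I(t))$ is a $C^{\infty}$ isotopy. For algebraicity: $I(t)$ is a compact complex analytic submanifold of the projective variety $\mathbb P^{n}$, hence projective algebraic by Chow's theorem; equivalently, $I(t)$ is the union of the irreducible components of the algebraic set $S(\mathcal F_t)$ that meet a fixed small tubular neighbourhood of $I(f_0)$. For the isotopy: fix a tubular neighbourhood $T$ of $I(f_0)$ in $\mathbb P^{n}$; for $|t|$ small $I(t)\subset T$ and, being $C^{\infty}$-close to the zero section, is the graph of a section $\sigma_t$ of the normal bundle with $\sigma_0=0$ and $\sigma_t$ depending $C^{\infty}$ on $t$ (assembled from the local holomorphic data $z_j(t)$ via a partition of unity subordinate to $\{U_j\}$); the rescalings $\tau\mapsto\tau\sigma_t$ give the germ of $C^{\infty}$ isotopy $(I(t))_{t\in(\mathbb C,0)}$ with $I(0)=I(f_0)$.

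The step I expect to be the main obstacle is the persistence, for $n\ge 4$, of the local product structure: showing that near $I(f_0)$ the deformed foliation $\mathcal F_t$ is again, locally, the pull-back by a submersion of a quasi-homogeneous foliation of the prescribed type. For $n=3$ this is essentially a direct application of Proposition \ref{prop5.5}; in general it requires controlling the full codimension-one foliation from its transverse data along a codimension-three submanifold, i.e.\ the rigidity of the generalized-Kupka transverse type, which is precisely where Lemma 2.3.2 of \cite{ln} (and the quasi-homogeneous machinery of \cite{ccgl}) carries the real weight, and one must also rule out spurious extra singular components of $\mathcal F_t$ accumulating on $I(f_0)$ (handled by the ``all singularities in $B$ are GK'' clause of Proposition \ref{prop5.5}).
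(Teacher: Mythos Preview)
The paper does not actually prove this lemma: its entire proof is the single line ``See \cite[lema 2.3.2, p.81]{ln}.'' Your proposal is a correct and well-organized reconstruction of what that cited argument presumably contains, built from exactly the ingredients the paper has set up (Lemma \ref{lema5.6}, its Corollary, and Proposition \ref{prop5.5}), together with standard tools (compactness, Chow, tubular neighbourhoods). So there is nothing to compare at the level of strategy; you have supplied the details the paper elects to outsource.

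Two minor remarks. First, once you have established that the local pieces $I_j(t)$ glue to a global complex submanifold $I(t)$, the section $\sigma_t$ of the normal bundle is already determined by $I(t)$ itself and is automatically holomorphic in the fibre variable and $C^\infty$ in $t$; invoking a partition of unity there is unnecessary and slightly misleading, since it suggests $\sigma_t$ is being constructed rather than read off. Second, your identification of the genuine content---the persistence of the transverse product structure for $n\ge 4$---is exactly right, and is precisely the point at which the paper leans on \cite{ln} and \cite{ccgl} rather than arguing from scratch.
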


\begin{remark}
In the case $n>3$, the variety $I(t)$ is connected since $I(f_{0})$ is connected. The local product structure in $I(t)$ implies that the transversal type of $\mathcal{F}_{t}$ is constant. In particular, $\mathcal F_{p,t}$, does not depend on $p \in I(t)$. In the case $n=3$, $I(t)=p_{1}(t),..., p_{j}(t),...,p_{\frac{\nu^{3}}{\alpha \beta \gamma}}(t)$ and we can not guarantee a priori that $\mathcal F_{p_{i},t}=\mathcal F_{p_{j},t}$, if $i \neq j.$
\end{remark}
  The singular set of  $\mathcal{G}_{0}$ consists of the points $a=[0:0:1]$, $b=[0:1:0]$, $c=[1:0:0]$, and the subsets $\mathcal S_{W}(\mathcal G_{0})$,  $\mathcal S_{\ell_{r}}(\mathcal G_{0})$, $0\leq r\leq2$.  We know that  $\# \mathcal S_{W}(\mathcal G_{0})=(d-1)^{2}$, $\# \mathcal S_{\ell_{r}}(\mathcal G_{0})=(d-1)$, $0\leq r\leq2$.  Let $\tau \in Sing( \mathcal G_{0})$ and $K(\mathcal F_0)=\cup_{{\tau\in Sing( \mathcal G_{0})}}V_{\tau}\backslash I(f_{0})$ where $V_{\tau}=\overline{f_{0}^{-1}(\tau)}.$ 
As in Lemma \ref{Lemma5.8}, let us consider a representative of the germ $(\mathcal F_{t})_{t}$, defined on a disc $D_{\delta}:=(|t|<\delta).$ 
  
\begin{lemma} 
There exist $\epsilon >0$ and smooth isotopies $\phi_{\tau}:D_{\epsilon}\times V_{\tau} \to \mathbb P^n, \tau \in Sing(\mathcal G_{0})$, such that $V_{\tau}(t)=\phi_{\tau}(\{t\}\times V_{\tau})$ satisfies:
\begin{enumerate}
\item[(a)] $V_{\tau}(t)$ is an algebraic subvariety of codimension two of $\mathbb P^n$ and  $V_{\tau}(0) = V_{\tau}$ for all $ \tau \in Sing(\mathcal G_{0})$ and for all $t \in D_{\epsilon}.$
\item[(b)] $I (t) \subset V_{\tau}(t)$ for all $\tau \in Sing(\mathcal G_{0})$ and for all $t \in D_{\epsilon} $. Moreover, if $\tau \neq \tau'$, and $\tau, \tau' \in Sing(\mathcal G_{0})$,  we have $V_{\tau}(t)  \cap V_{\tau'}(t) = I(t)$ for all $t \in D_{\epsilon}$ and the intersection is transversal.
\item[(c)]   $V_{\tau}(t) \backslash I(t)$ is contained in the Kupka-set of $\mathcal{F}_t$ for all $ \tau \in Sing(\mathcal G_{0})$ and for all $t \in D_{\epsilon}.$ In particular, the transversal type of $\mathcal F_{t}$ is constant along $V_{\tau}(t) \backslash I(t)$. 
\end{enumerate}
\end{lemma}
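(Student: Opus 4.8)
The plan is to build the isotopies $\phi_\tau$ by a singularity-tracking argument, treating separately the singularities of $\mathcal G_0$ of different types (the three corners $a,b,c$, the points of $\mathcal S_{\ell_r}(\mathcal G_0)$ lying on one invariant line, and the points of $\mathcal S_W(\mathcal G_0)$ lying on none of the lines), and then gluing the resulting local data along $I(t)$ using Lemma \ref{Lemma5.8}. First I would fix $\tau\in\mathrm{Sing}(\mathcal G_0)$ and note that $V_\tau=\overline{f_0^{-1}(\tau)}$ is, by genericity of the pair $(f_0,\mathcal G_0)$, a smooth codimension-two algebraic subvariety of $\mathbb P^n$ containing $I(f_0)$, and that along $V_\tau\setminus I(f_0)$ the foliation $\mathcal F_0$ has the Kupka product structure computed in Section \ref{section5.1}. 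Away from $I(f_0)$ the Kupka set is stable under $C^\infty$-small perturbations of the foliation \cite{kupka},\cite{gln}, so for $|t|$ small the Kupka component of $\mathcal F_t$ near $V_\tau\setminus I(f_0)$ persists as a smooth codimension-two variety $V_\tau(t)'$ depending smoothly on $t$; this already gives (c) away from $I(t)$ and the local transversal-type statement. The point is then to close this up across $I(t)$: near each $p\in I(f_0)$, $p$ is a quasi-homogeneous singularity of $\eta$ of type $[\beta\gamma:\alpha\gamma:\alpha\beta]$ by Lemma \ref{lema5.6}, the isotopy $I(t)$ and the families $\eta_{p,t}$ are furnished by Lemma \ref{Lemma5.8} and Proposition \ref{prop5.5}, and the local model $\eta$ in \eqref{eta} shows explicitly that the three ``coordinate'' surfaces $(x_j=0)$ carry the Kupka branches through the corner case while the preimages of the non-corner singularities of $\mathcal G$ are cut out by $\tilde A_i(x_0^\alpha,x_1^\beta,x_2^\gamma)=0$; since these are again quasi-homogeneous data, Proposition \ref{prop5.5} tracks them and identifies the closure of $V_\tau(t)'$ near a point of $I(t)$ with a smooth codimension-two germ $V_\tau(t)$ containing $I(t)$.

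With the local pieces in hand I would assemble the global $V_\tau(t)$ by a partition-of-unity / uniqueness-of-analytic-continuation argument over the compact variety $V_\tau$ (exactly as in the cited \cite[lema 2.3.2]{ln} mechanism used for $I(t)$), getting a smooth isotopy $\phi_\tau:D_\epsilon\times V_\tau\to\mathbb P^n$ with $\phi_\tau(\{0\}\times V_\tau)=V_\tau$ and $V_\tau(t):=\phi_\tau(\{t\}\times V_\tau)$ a smooth submanifold; its algebraicity for each fixed $t$ follows because $V_\tau(t)$ is a compact complex-analytic subvariety of projective space (Chow), once one checks the isotopy is by biholomorphisms onto its image compatible with the complex structure — here one uses that the Kupka structure and the quasi-homogeneous normal forms are holomorphic, so the tracking maps are holomorphic in the space variable. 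Statement (a) then holds, (b) reduces to showing $I(t)\subset V_\tau(t)$ and that for $\tau\neq\tau'$ the two varieties meet exactly along $I(t)$ transversally: the inclusion is built into the local construction near $I(t)$, and the transversality of $V_\tau(t)\cap V_{\tau'}(t)=I(t)$ follows by continuity from the transversality at $t=0$, which in turn is the genericity/transverse-intersection hypothesis on $f_0$ together with the fact that the three local coordinate hypersurfaces $(x_j=0)$ in \eqref{eta} meet transversally along the origin (the indeterminacy locus).

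Finally (c): on $V_\tau(t)\setminus I(t)$ the Kupka structure is the analytic continuation of the Kupka structure on $V_\tau\setminus I(f_0)$, so it persists and its transversal type (a germ of a foliation in $(\mathbb C^2,0)$ with a nondegenerate hyperbolic singularity in the corner case, resp. a saddle of the appropriate type elsewhere) is locally constant, hence constant on each connected piece of $V_\tau(t)\setminus I(t)$, as claimed.

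I expect the main obstacle to be the passage across $I(t)$: showing that the Kupka branch $V_\tau(t)'$, a priori only defined and smooth on $\mathbb P^n\setminus I(t)$, extends to a \emph{smooth} (and then analytic, hence algebraic) subvariety through the quasi-homogeneous locus $I(t)$, with the codimension-two varieties $V_\tau(t)$ for different $\tau$ still meeting exactly along $I(t)$ and transversally. This is where one must use the precise quasi-homogeneous normal form of Theorem \ref{teo5.3} and the persistence statement of Proposition \ref{prop5.5} rather than the merely $C^\infty$ Kupka stability, and where the bookkeeping of the local model \eqref{eta} — which of the surfaces $V_\tau$ pass through $I(f_0)$ and with what multiplicity — does the real work.
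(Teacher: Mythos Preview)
The paper does not give its own argument here: its entire proof is the single line ``See \cite[lema 2.3.3, p.83]{ln}'', so there is no in-paper proof to compare against. Your sketch is the kind of argument one expects to find behind that citation --- Kupka stability away from $I(f_0)$, persistence of the quasi-homogeneous normal form near $I(t)$ via Proposition~\ref{prop5.5} and Lemma~\ref{Lemma5.8}, and a gluing over the compact $V_\tau$ --- and you correctly identify the real content as the passage through $I(t)$.

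Two remarks on the details of your sketch. First, your route to algebraicity is more delicate than necessary: you do not need to argue that the isotopy $\phi_\tau$ is holomorphic in the space variable and then invoke Chow. The variety $V_\tau(t)$ is a union of irreducible components of the singular set of $\mathcal F_t$, and $\mathrm{Sing}(\mathcal F_t)$ is already algebraic (it is the common zero locus of the homogeneous polynomial coefficients of a $1$-form representing $\mathcal F_t$). So once you know $V_\tau(t)$ is a closed codimension-two piece of $\mathrm{Sing}(\mathcal F_t)$, algebraicity is automatic. Second, the step where you say ``Proposition~\ref{prop5.5} tracks them and identifies the closure of $V_\tau(t)'$ near a point of $I(t)$ with a smooth codimension-two germ'' is exactly the point that needs to be made precise: Proposition~\ref{prop5.5} guarantees that the deformed $1$-form $\eta_{p,t}$ is again quasi-homogeneous of the same type $[\beta\gamma:\alpha\gamma:\alpha\beta]$, so its singular set is again a union of $S$-orbits through the new origin $z(t)$; one then matches these orbits with the incoming Kupka branches $V_\tau(t)'$ by continuity in $t$. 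This is fine, but it is not a consequence of Proposition~\ref{prop5.5} alone --- it uses the explicit description of $\mathrm{Sing}(\eta)$ in the normal form \eqref{eta} as $S$-invariant curves, together with the fact (used in the proof of Lemma~\ref{lema5.6}) that away from the origin these curves are Kupka.
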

\begin{proof}  See \cite[lema 2.3.3, p.83]{ln}.
 \end{proof}

\subsection{End of the proof of Theorem \ref{teob}}\label{subsection5.4} 

We divide the end of the proof of Theorem \ref{teob} in two parts. In the first part we construct a family  of rational maps $f_{t}:{\mathbb{P}^n \DashedArrow[->,densely dashed    ]   \mathbb{P}^2}$, $f_{t} \in Gen(n,\nu,\alpha,\beta,\gamma)$, such that $(f_{t})_{t \in D_{\epsilon}}$ is a deformation of $f_{0}$ and the subvarieties $V_\tau$, $\tau \in Sing \mathcal{G}_0$, are fibers of $f_t$ for all $t$. In the second part we show that there exists a family of foliations $(\mathcal {G}_{t})_{t\in D_{\epsilon}}, \mathcal {G}_{t} \in  \mathcal A$ (see Section \ref{section4}) such that  $\mathcal{F}_{t}= f_{t}^{ \ast}(\mathcal{G}_{t})$ for all $t\in D_{\epsilon}$. 

\subsubsection{Part 1}\label{part1} Let us define the family of candidates that will be a deformation of the mapping $f_{0}.$   Set $V_{a}=\overline{{f}_{0}^{-1}(a)}$, $V_{b}=\overline{{f}_{0}^{-1}(b)}$, $V_{c}=\overline{{f}_{0}^{-1}(c)}$, where $a=[0:0:1]$, $b=[0:1:0]$ and  $c=[1:0:0]$ and denote by $V_{\tau^\ast}=\overline{{f}_{0}^{-1}(\tau^\ast)}$, where $\tau^\ast \in Sing(\mathcal G_{0})\backslash\{a,b,c\}$.
  \begin{proposition}\label{recupmapas}
Let $(\mathcal{F}_{t})_{t \in D_{\epsilon}}$ be a deformation of $\mathcal F_{0}= f_{0}^*(\mathcal G_{0})$, where $(f_{0}, \mathcal G_{0})$ is a generic pair, with $\mathcal G_{0} \in \mathcal A$, $f_0 \in Gen\left(n,\nu,\alpha,\beta,\gamma\right)$ and $deg(f_{0})=\nu\geq 2$. Then there exists a deformation $({f}_{t})_{t \in D_{\epsilon}}$ of $f_{0}$ in $Gen\left(n,\nu,\alpha,\beta,\gamma\right)$ such that:
\begin{enumerate}
\item[(i)]$V_{a}(t),V_{b}(t)$ and $V_{c}(t)$ are fibers of $({f}_{t})_{t \in D_{\epsilon'}}$.
\item[(ii)] $I(t)=I(f_{t}), \forall {t \in D_{\epsilon'}}$.
    \end{enumerate}
  \end{proposition}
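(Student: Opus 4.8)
The plan is to reconstruct the components of $f_t$ from the three codimension-two subvarieties $V_a(t)$, $V_b(t)$, $V_c(t)$ furnished by the previous lemma, using the fact that a generic branched rational map is essentially determined by the fibers over the three corners $a,b,c$ of $\mathcal G_0$. Concretely, $V_a = \overline{f_0^{-1}(a)}$ is cut out (with multiplicity) by $(F_1 = 0)\cap(F_2=0)$, $V_b$ by $(F_0=0)\cap(F_2=0)$, and $V_c$ by $(F_0=0)\cap(F_1=0)$; equivalently the hypersurface $(F_0=0)$ is the component common to $V_b$ and $V_c$, and similarly for $(F_1=0)$ and $(F_2=0)$. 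First I would show that for $t$ small the deformed varieties $V_a(t),V_b(t),V_c(t)$ — which by the lemma are algebraic, smooth away from $I(t)$, of codimension two, meeting pairwise transversally along $I(t)$ — can again be written as $V_a(t) = (\tilde F_1^{(t)}=0)\cap(\tilde F_2^{(t)}=0)$, etc., for homogeneous polynomials $\tilde F_i^{(t)}$ depending holomorphically on $t$ with $\tilde F_i^{(0)}$ proportional to $F_i$. This is the step where one uses that the relevant Hilbert scheme / space of complete intersections is smooth at the point $[f_0]$, so that the isotopy $(V_\tau(t))$ lifts to a holomorphic path of defining equations; the degrees $\deg(F_i)$ are locally constant along the deformation because $V_\tau(t)$ is an isotopy, hence has constant cohomology class.

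Next I would set $f_t := \left( (\tilde F_0^{(t)})^{\alpha} : (\tilde F_1^{(t)})^{\beta} : (\tilde F_2^{(t)})^{\gamma}\right)$ and check that this is the desired deformation: by construction $\tilde F_i^{(0)} = F_i$ (after normalizing scalars), so $f_0$ is recovered; each $\tilde F_i^{(t)}$ has the same degree as $F_i$, so $f_t \in BRM(n,\nu,\alpha,\beta,\gamma)$; and for $t$ small the genericity condition of Definition \ref{generic} is open, so $f_t \in Gen(n,\nu,\alpha,\beta,\gamma)$ after shrinking $\epsilon$. Property (i) is then immediate: $V_a(t)$ is by construction the fiber $f_t^{-1}(a)$, and similarly for $b,c$. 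Property (ii) follows because $I(f_t) = (\tilde F_0^{(t)}=0)\cap(\tilde F_1^{(t)}=0)\cap(\tilde F_2^{(t)}=0) = V_b(t)\cap V_c(t) \cap V_a(t)$, which by part (b) of the preceding lemma equals $I(t)$; here one also uses that the $\tilde F_i^{(t)}$ have no common factor for small $t$, again an open condition satisfied at $t=0$.

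The main obstacle I expect is the first step: showing that the isotopy $t\mapsto V_\tau(t)$ — which a priori is only a $C^\infty$ family of smooth algebraic subvarieties — actually arises from a holomorphic family of polynomial defining equations of constant degree, i.e. that the deformation stays inside the component of complete intersections of the expected type and does not acquire embedded components, extra irreducible pieces, or jumps in degree. This requires an unobstructedness argument for the deformations of the $V_\tau$, using that $V_\tau$ is a smooth-away-from-$I(f_0)$ complete intersection and that $n\ge 3$ gives enough room (codimension two subvarieties in $\mathbb P^n$, $n\ge 3$, that are complete intersections deform only to complete intersections, and the normal bundle has vanishing obstruction group). Once the defining equations are shown to vary holomorphically, the remaining verifications are routine openness arguments and bookkeeping with degrees. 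A secondary subtlety is matching the three families of defining equations consistently — the polynomial defining $(F_0=0)$ extracted from $V_b(t)$ must agree up to scalar with the one extracted from $V_c(t)$ — which follows from the transversality in part (b) of the lemma and the fact that the common component of $V_b(t)$ and $V_c(t)$ is a single reduced irreducible hypersurface for $t$ near $0$.
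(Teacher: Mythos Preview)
Your overall strategy matches the paper's: deform each $V_\tau(t)$ as a complete intersection (the paper cites Sernesi for this), then assemble a map $f_t$ from the resulting defining equations. You have correctly identified both the main obstacle and the secondary subtlety. However, your resolution of the secondary subtlety does not work, and this is precisely where the paper's proof has its real content.

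The difficulty is this. Writing, say, $V_a(t)=(F_0(t)=F_1(t)=0)$, $V_b(t)=(\hat F_0(t)=\hat F_2(t)=0)$, $V_c(t)=(\hat F_1(t)=F_2(t)=0)$, there is no reason for $\hat F_0(t)$ to agree with $F_0(t)$, nor $\hat F_1(t)$ with $F_1(t)$: the defining equations of a complete intersection are \emph{not} unique, since $(G=H=0)=(G+\phi H=H=0)$ for any $\phi$ of the right degree. Your proposed fix --- ``the common component of $V_b(t)$ and $V_c(t)$ is a single reduced irreducible hypersurface'' --- does not make sense: each $V_\tau(t)$ is an irreducible codimension-two subvariety, and the pairwise intersections are $I(t)$, which has codimension three. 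There is no hypersurface to extract directly from any $V_\tau(t)$. (Incidentally, your labeling is off: with $a=[0:0:1]$ one has $V_a=(F_0=F_1=0)$, not $(F_1=F_2=0)$.)

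The paper resolves the consistency problem in two steps. First, it shows $I(t)=(F_0(t)=F_1(t)=F_2(t)=0)$ by a counting/connectedness argument: the complete intersection $J(t)=(F_0(t)=F_1(t)=F_2(t)=0)$ is transverse and contains $I(t)=V_a(t)\cap V_c(t)$, and both have the same cardinality $\nu^3/(\alpha\beta\gamma)$ (or are connected smooth of the same class when $n\ge 4$). Second, since $\hat F_1(t)$ vanishes on $I(t)=J(t)$, a Noether-type lemma gives $\hat F_1(t)\in\langle F_0(t),F_1(t),F_2(t)\rangle$. Now the strict degree ordering $\deg F_0>\deg F_1>\deg F_2$ (forced by $\alpha<\beta<\gamma$) kills the $F_0$-coefficient, so $\hat F_1(t)=F_1(t)+g(t)F_2(t)$ and hence $V_c(t)=(F_1(t)=F_2(t)=0)$; similarly $\hat F_2(t)=F_2(t)$ and $\hat F_0(t)=F_0(t)+m(t)F_1(t)+n(t)F_2(t)$, and one sets $P_0=F_0+mF_1$, $P_1=F_1$, $P_2=F_2$. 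This Noether step together with the degree inequalities is the mechanism that makes the three complete-intersection presentations compatible; your sketch is missing it.
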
  
\begin{proof} Let $\tilde f_{0}=(F^\alpha_{0},F^\beta_{1},F^\gamma_{2}): \mathbb C^{n+1} \to\mathbb C^{3}$ be the homogeneous expression of $f_{0}$. Then $V_{c}$, $V_{b}$, and $V_{a}$ appear as the complete intersections $(F_{1}=F_{2}=0)$, $(F_{0}=F_{2}=0)$, and $(F_{0}=F_{1}=0)$ respectively. Hence $I(f_{0})=V_{a} \cap V_{b} =V_{a} \cap V_{c}= V_{b} \cap V_{c}$. It follows from \cite{Ser0} (see section 4.6 pp 235-236) that $V_{a}(t)$ is a complete intersection, say $V_{a}(t)=(F_{0}(t)=F_{1}(t)=0)$, where $(F_{0}(t))_{t \in D_{\epsilon'}}$ and $(F_{1}(t))_{t \in D_{\epsilon'}}$ are deformations of $F_{0}$ and $F_{1}$ and $ D_{\epsilon'}$ is a possibly smaller neighborhood of $0$. Moreover, $F_{0}(t)=0$ and $F_{1}(t)=0$ meet transversely along $V_{a}(t)$. In the same way, it is possible to define $V_{c}(t)$ and $V_{b}(t)$ as complete intersections, say $(\hat F_{1}(t)=F_{2}(t)=0)$ and $(\hat F_{0}(t)=\hat F_{2}(t)=0)$ respectivelly, where $(F_{j}(t))_{t \in D_{\epsilon'}}$ and $(\hat F_{j}(t))_{t \in D_{\epsilon'}}$ are deformations of $F_{j}$, $0\leq j\leq2$. 

We will prove we can find polynomials $P_{0}(t)$, $P_{1}(t)$ and $P_{2}(t)$ such that $V_{c}(t)=(P_{1}(t)=P_{2}(t)=0)$, $V_{b}(t)=(P_{0}(t)=P_{2}(t)=0)$ and $V_{a}(t)=(P_{0}(t)=P_{1}(t)=0)$. Observe first that since $F_{0}(t), F_{1}(t)$ and $F_{2}(t)$ are near $F_{0}$, $F_{1}$ and $F_{2}$ respectively, they meet as a regular complete intersection at:
$$J(t)=(F_{0}(t)=F_{1}(t)=F_{2}(t)=0) = V_a(t) \cap (F_2(t)=0).$$
Hence $J(t) \cap (\hat F_{1}(t)=0)=V_{c}(t)\cap V_{a}(t)=I(t)$, which implies that $I(t)\subset J(t).$  Since $I(t)$ and $J(t)$ have $\frac{\nu^{3}}{\alpha \beta \gamma}$ points, we have that  $I(t)=J(t)$ for all $t \in D_{\epsilon'}$. 
\begin{remark}In the case $n\geq 4$, both sets are codimension-three smooth and connected submanifolds of $\mathbb P^n$, implying again that $I(t)=J(t).$ In particular, we obtain that $$I(t)=(F_{0}(t)=F_{1}(t)=F_{2}(t)=0)\subset (\hat F_{j}(t)=0), 0\leq j\leq2.$$
 \end{remark}  
 
We will use the following version of Noether's Normalization Theorem (see \cite{ln} p 86):

\begin{lemma} (Noether's Theorem) Let $G_{0},...,G_{k}  \in \mathbb {C}[z_{1},...,z_{m}]$ be homogeneous polynomials where $0 \leq k \leq m$ and $m \geq 2$, and $X=(G_{0}=...=G_{k}=0)$. Suppose that the set $Y:=\{p \in X | dG_{0}(p) \wedge...\wedge dG_{k}(p)=0\}$ is either $0$ or $\emptyset$. If $G  \in \mathbb {C}[z_{1},...,z_{m}]$ satisfies $G|_{X} \equiv 0$, then  $G$  $\in$  $<G_{0},...,G_{k}>$. 
\end{lemma} 
 
Take $k=2$, $G_{0}=F_{0}(t)$, $G_{1}=F_{1}(t)$ and $G_{2}=F_{2}(t)$. Using Noether's Theorem with $Y=0$ and the fact that all polynomials involved are homogeneous, we have $\hat F_{1}(t)$ $\in$ $<F_{0}(t),F_{1}(t),F_{2}(t)>$. Since $deg(F_{0}(t))>deg(F_{1}(t))>deg(F_{2}(t))$, we conclude that $\hat F_{1}(t)=F_{1}(t)+g(t)F_{2}(t)$, where $g(t)$ is a homogeneous polynomial of degree $deg(F_{1}(t))-deg(F_{2}(t))$. Moreover observe that $V_{c}(t)=V(\hat F_{1}(t),F_{2}(t))=V(F_{1}(t),F_{2}(t))$, where $V(H_{1},H_{2})$ denotes the projective algebraic variety defined by $(H_{1}=H_{2}=0)$. Similarly for $V_{b}(t)$ we have that $\hat F_{2}(t)$ $\in$ $<F_{0}(t),F_{1}(t),F_{2}(t)>$.  On the other hand, since $\hat F_{2}(t)$ has the lowest degree, we can assume that $\hat F_{2}(t)=F_{2}(t)$.
   
In an analogous way we have that $\hat F_{0}(t)=F_{0}(t)+m(t)F_{1}(t)+n(t)F_{2}(t)$ for the polynomial $\hat F_{0}(t)$. Now observe that $V(\hat F_{0}(t),\hat F_{2}(t))=V(F_{0}(t)+m(t)F_{1}(t),F_{2}(t))$. Hence we can define $f_{t}=(P_{0}^{\alpha}(t),P_{1}^{\beta}(t),P_{2}^{\gamma}(t))$
 where  $P_{0}(t)=F_{0}(t)+m(t)F_{1}(t)$, $P_{1}(t)=F_{1}(t)$ and $P_{2}(t)=F_{2}(t)$. This defines a  family of mappings $({f}_{t})_{t \in D_{\epsilon'}}:\mathbb P^{3} \DashedArrow[->,densely dashed    ]\mathbb P^{2}$, and $V_{a}(t)$, $V_{b}(t)$ and $V_{c}(t)$ are fibers of ${f}_{t}$ for fixed $t$. Observe that, for ${\epsilon'}$ sufficiently small, $({f}_{t})_{t \in D_{\epsilon'}}$ is generic in the sense of definition $3.2$, and its indeterminacy locus $I({f}_{t})$ is precisely $I(t).$ Moreover, since $Gen(3,\nu,\alpha,\beta,\gamma)$ is open, we can suppose that this family $({f}_{t})_{t \in D_{\epsilon'}}$ is in $Gen\left(3,\nu,\alpha,\beta,\gamma\right)$. This concludes the proof of proposition $5.10$. 
\end{proof}

Now we will prove that the remaining curves $V_{\tau}(t)$ are also fibers of $f_{t}$. In the local coordinates $X(t)=(x_{0}(t),x_{1}(t),x_{2}(t))$ near some point of $I(t)$ we have that the vector field $S$ is diagonal and the components of the map ${f}_{t}$ are written as follows:
\begin{equation}\label{eq3}
P_{0}(t)=u_{0t}x_{0}(t)+x_{1}(t)x_{2}(t)h_{0t}
\end{equation}
\begin{equation*}
P_{1}(t)=u_{1t}x_{1}(t)+x_{0}(t)x_{2}(t)h_{1t}
\end{equation*}
\begin{equation*}
P_{2}(t)=u_{2t}x_{2}(t)+x_{0}(t)x_{1}(t)h_{2t}
\end{equation*}
where the functions $u_{it} \in \mathcal O^{*}(\mathbb C^3,0)$ and $h_{it} \in \mathcal O(\mathbb C^3,0) ,0\leq i \leq 2$. Note that when the parameter $t$ goes to $0$ the functions $h_{i}(t),0\leq i \leq 2$ also goes to $0$.
We want to show that an orbit of the vector field $S$ in the coordinate system $X(t)$ that extends globally like a singular curve of the foliation $\mathcal F_{t}$ is a fiber of ${f}_{t}$. 

The condition $\alpha<\beta<\gamma$ implies that $\alpha\gamma<\beta(\alpha+\gamma)$ and $\alpha\beta<\gamma(\alpha+\beta)$. Firstly we prove that if $\beta\gamma\leq\alpha(\beta+\gamma)$ then any generic orbit of the vector field $S$ that extends globally as singular curve of the foliations $\mathcal F_{t}$ is also a fiber of $f_{t}$ for fixed $t$. Afterwords we show that if we have $\beta\gamma>\alpha(\beta+\gamma)$ then any orbit of the vector field $S$ contained in the coordinate planes and which extends globally as singular curve of the foliations $\mathcal F_{t}$ are fibers of the mapping $f_{t}$. Using these facts, we can prove that any generic orbit of the vector field $S$ that extends globally as singular curve of the foliations $\mathcal F_{t}$ is also a fiber of $f_{t}$ in the second case.

\begin{lemma}\label{casomenor} If $\beta\gamma\leq\alpha(\beta+\gamma)$ then any generic orbit of the vector field $S$ that extends globally as singular curve of the foliations $\mathcal F_{t}$ is also a fiber of $f_{t}$ for fixed $t$.
\end{lemma}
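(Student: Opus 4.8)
The plan is to prove that $f_t$ is constant on the global algebraic curve $\overline{\mathcal O}$ obtained by closing up such an orbit, so that $\overline{\mathcal O}$ lies in — and, by a dimension count, is a component of — a fibre of $f_t$. Fix a point $p\in I(f_t)$ and work in the chart $X(t)=(x_0(t),x_1(t),x_2(t))$ of Proposition~\ref{recupmapas} in which $S$ is diagonal and the components $P_0,P_1,P_2$ of $f_t$ have the form \eqref{eq3}. Since $\gcd(\beta\gamma,\alpha\gamma,\alpha\beta)=1$, a generic orbit of $S$ accumulating at $p$ is the image of $\zeta\mapsto\rho(\zeta)=(\zeta^{\beta\gamma}a_0,\zeta^{\alpha\gamma}a_1,\zeta^{\alpha\beta}a_2)$ with $a_0a_1a_2\neq0$, this parametrisation being injective near $\zeta=0$, so $\overline{\mathcal O}$ is unibranch at $p$. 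By hypothesis $\overline{\mathcal O}$ is a global algebraic singular curve of $\mathcal F_t$ passing through $I(f_t)$, hence by the isotopy lemmas it is one of the curves $V_\tau(t)$; in particular $I(f_t)\subset\overline{\mathcal O}$ and, since the degree is preserved by the $C^\infty$ isotopy, $\deg\overline{\mathcal O}=\deg V_\tau=\nu^2$ — the value $\nu^2$ being forced at $t=0$, where $V_\tau=\overline{f_0^{-1}(\tau)}$ is literally a fibre of $f_0$ (a Bézout count like the one below, with $g_0$ constant, gives $\nu\deg V_\tau-\nu^3=0$).

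Next I would substitute $\rho(\zeta)$ into $P_0,P_1,P_2$. Using $u_i(t)\in\mathcal O^*(\mathbb C^3,0)$ and $h_i(t)\in\mathcal O(\mathbb C^3,0)$, equation \eqref{eq3} yields $P_i(\rho(\zeta))=(\text{monomial of $S$-weight }w_i)\cdot u_i + (\text{monomial of $S$-weight }w_i')\cdot h_i$, where $(w_0,w_1,w_2)=(\beta\gamma,\alpha\gamma,\alpha\beta)$ and $(w_0',w_1',w_2')=(\alpha(\beta+\gamma),\beta(\alpha+\gamma),\gamma(\alpha+\beta))$. The inequalities $w_1<w_1'$ and $w_2<w_2'$ hold automatically from $\alpha<\beta<\gamma$, while $w_0\le w_0'$ is exactly the present hypothesis $\beta\gamma\le\alpha(\beta+\gamma)$; so the first monomial dominates in each line and, because $u_i(0)\neq0$ and the orbit is generic (in the borderline case $\beta\gamma=\alpha(\beta+\gamma)$ one uses genericity of $a_0$), one gets
$$\mathrm{ord}_\zeta P_0(\rho(\zeta))=\beta\gamma,\qquad \mathrm{ord}_\zeta P_1(\rho(\zeta))=\alpha\gamma,\qquad \mathrm{ord}_\zeta P_2(\rho(\zeta))=\alpha\beta.$$
Consequently $P_0^\alpha(\rho(\zeta))$, $P_1^\beta(\rho(\zeta))$ and $P_2^\gamma(\rho(\zeta))$ all vanish at $\zeta=0$ to the same order $\alpha\beta\gamma$, with nonzero leading coefficients $c_0,c_1,c_2$. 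Hence $f_t\circ\rho$ extends holomorphically across $\zeta=0$ with value $[c_0:c_1:c_2]$; performing this at each point of $I(f_t)$ shows that $f_t|_{\overline{\mathcal O}}$ extends to a morphism $g_t$ from (the normalisation of) $\overline{\mathcal O}$ to $\mathbb P^2$.

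Finally I would run a Bézout count. For a generic line $H\subset\mathbb P^2$ the hypersurface $f_t^*H=\{aP_0^\alpha+bP_1^\beta+cP_2^\gamma=0\}$ has degree $\nu$ and contains $I(f_t)$, and by the previous step its local intersection multiplicity with $\overline{\mathcal O}$ at each point of $I(f_t)$ equals $\mathrm{ord}_\zeta(aP_0^\alpha+bP_1^\beta+cP_2^\gamma)(\rho(\zeta))=\alpha\beta\gamma$ (a generic $H$ keeps $ac_0+bc_1+cc_2\neq0$, and also avoids the finite set $g_t(I(f_t))$). Since $I(f_t)$ consists of $\nu^3/(\alpha\beta\gamma)$ points, these already contribute $\nu^3$ to $(\overline{\mathcal O}\cdot f_t^*H)_{\mathbb P^n}=\nu\deg\overline{\mathcal O}=\nu\cdot\nu^2=\nu^3$. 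Therefore $\overline{\mathcal O}$ and $f_t^*H$ meet nowhere off $I(f_t)$, i.e. $f_t(\overline{\mathcal O}\setminus I(f_t))$ misses every generic line; as $\overline{\mathcal O}\setminus I(f_t)$ is connected and $f_t$ is continuous there, $f_t$ is constant on $\overline{\mathcal O}$. Thus $\overline{\mathcal O}$ lies in a fibre of $f_t$ and, both having dimension $n-2$, it is a component of that fibre, which is the assertion of the lemma.

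The main obstacle is the order computation in the second step: one must check that all three of $P_0^\alpha,P_1^\beta,P_2^\gamma$ vanish along the orbit to the \emph{same} order $\alpha\beta\gamma$. This is exactly where $\beta\gamma\le\alpha(\beta+\gamma)$ is used — it is precisely the condition under which the $x_0$-term, rather than the $x_1x_2$-term, controls $P_0$ — and it is exactly what fails when $\beta\gamma>\alpha(\beta+\gamma)$; this is why the complementary regime needs the separate two-step treatment (first the orbits contained in the coordinate planes, then the generic ones) announced right after the statement. A secondary point to monitor is the genericity of the orbit, which is what keeps $c_0,c_1,c_2\neq0$, especially in the equality case $\beta\gamma=\alpha(\beta+\gamma)$.
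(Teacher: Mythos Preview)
Your argument is correct, and it shares with the paper the essential local step: parametrising the orbit by $\zeta\mapsto(\zeta^{\beta\gamma},\zeta^{\alpha\gamma},\zeta^{\alpha\beta})$ and using $\beta\gamma\le\alpha(\beta+\gamma)$ (together with the automatic inequalities for the other two components) to see that $P_0^\alpha,P_1^\beta,P_2^\gamma$ all vanish to the common order $\alpha\beta\gamma$, so that $f_t$ extends across each indeterminacy point along $V_\tau(t)$.

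Where you diverge is in the global step. The paper argues by \emph{compactness and continuity in $t$}: away from small balls around $I(f_t)$ the image $f_t(V_\tau(t))$ is close to the constant $f_0(V_\tau)=[d:e:f]$ with $def\ne0$, while inside those balls the explicit formula $[(u_0+s^kh_0)^\alpha:(u_1+s^lh_1)^\beta:(u_2+s^mh_2)^\gamma]$ has nonvanishing components; hence $f_t(V_\tau(t))$ avoids the three coordinate lines of $\mathbb P^2$, and since a curve in $\mathbb P^2$ meets every line, the image is a point. You instead run a \emph{B\'ezout count}: knowing $\deg V_\tau(t)=\nu^2$ from the isotopy, the total intersection $(V_\tau(t)\cdot f_t^*H)=\nu^3$ is entirely accounted for by the $\nu^3/(\alpha\beta\gamma)$ points of $I(f_t)$, each contributing $\alpha\beta\gamma$. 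Both are valid; the paper's route is softer and avoids computing degrees, while yours is sharper and makes the numerics transparent.

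One small point: in the borderline case $\beta\gamma=\alpha(\beta+\gamma)$ you appeal to ``genericity of $a_0$'' to keep the leading coefficient of $P_0\circ\rho$ nonzero. But the lemma concerns a \emph{given} orbit (``generic'' in the statement only means it is not contained in a coordinate plane), so you cannot choose $a_0$. The correct fix is the one the paper implicitly uses: since $h_{0t}\to0$ as $t\to0$, for small $t$ the leading coefficient $a_0u_0(0)+a_1a_2h_0(0)$ is close to $a_0u_0(0)\ne0$, hence nonzero. With that adjustment your proof goes through.
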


\begin{proof} To simplify the notation we will omit the index $t$. Let $\delta(s)$ be a generic orbit of the vector field $S$ (here by a generic orbit we mean an orbit that is not contained in any coordinate plane). We can parametrize $\delta(s)$ as $s\to(as^{\beta\gamma},bs^{\alpha\gamma},cs^{\alpha\beta})$, $a\neq0,b\neq0,c\neq0$. Without loss of generality we can suppose that $a=b=c=1$. We have $$f_{t}(\delta(s))=[(s^{\beta\gamma}u_{0}+s^{\alpha(\beta+\gamma)}h_{0})^\alpha:(s^{\alpha\gamma}u_{1}+s^{\beta(\alpha+\gamma)}h_{1})^{\beta}:(s^{\alpha\beta}u_{2}+s^{\gamma(\alpha+\beta)}h_{2})^\gamma].$$ Condition $\beta\gamma\leq\alpha(\beta+\gamma)$ implies that we can extract the factor $s^{\alpha\beta\gamma}$ from $f_{t}(\delta(s))$.
 
Hence we obtain \begin{equation}\label{eq4}f_{t}(\delta(s))=[(u_{0}+s^{k}h_{0})^\alpha:(u_{1}+s^{l}h_{1})^{\beta}:(u_{2}+s^{m}h_{2})^\gamma]\end{equation}
where $k=\alpha(\beta+\gamma)-\beta\gamma$, $l=\beta(\gamma+\alpha)-\alpha\gamma$ and $m=\gamma(\alpha+\beta)-\alpha\beta$.

Since $V_{\tau}$ is a fiber, $f_{0}(V_{\tau})=[d:e:f]\in \mathbb P^2$ with $d\neq0,e\neq0,f\neq0$. If we take a covering of $I(f)=\{p_{1},..., p_{\frac{\nu^3}{\alpha\beta\gamma}}\}$ by small open balls $B_{j}(p_{j})$, $1\leq j\leq\frac{\nu^3}{\alpha\beta\gamma}$, the set $V_{\tau}\backslash\cup_{j}B_{j}(p_{j})$ is compact. For a small deformation $f_{t}$ of $f_{0}$ we have that $f_{t}[V_{\tau}(t)\backslash\cup_{j}B_{j}(p_{j})(t)]$ stays near $f[V_{\tau}\backslash\cup_{j}B_{j}(p_{j})]$. Hence for $t$ sufficiently small the components of expression \ref{eq4} do not vanish both inside as well as outside of the neighborhood $\cup_{j}B_{j}(p_{j})(t)$. 
 
This implies that the components of $f_{t}$ do not vanish along each generic fiber that extends locally as a singular curve of the foliation $\mathcal F_{t}$. This is possible only if $f_{t}$ is constant along these curves. In fact, $f_{t}(V_{\tau}(t))$ is either a curve or a point. If it is a curve then it cuts all lines of $\mathbb P^2$ and therefore the components should be zero somewhere. Hence $f_{t}(V_{\tau}(t))$ is constant and we conclude that $V_{\tau}(t)$ is a fiber.  
 
Observe also that when we make a blow-up with weights $(\beta\gamma,\alpha\gamma,\alpha\beta)$ at the points of $I(f_{t})$ we solve completely the indeterminacy points of the mappings $f_{t}$ in the case  $\beta\gamma\leq\alpha(\beta+\gamma)$ for each $t$.
\end{proof}

When $\beta\gamma>\alpha(\beta+\gamma)$ the situation requires more detail. Suppose that the orbits contained in the coordinate planes that extends globally as singular curves of the foliation $\mathcal F_{t}$ are fibers of $f_{t}$. To simplify assume also that g.c.d$(\alpha,\beta)=1$, g.c.d$(\alpha,\gamma)=1$ and g.c.d$(\gamma,\beta)=1$ (the general case is similar). We can assume without loss of generality that this orbit is contained in the coordinate plane $x_{0}(t)=0$. In this case  the orbit is of the form $(x_{0}=x_{1}^\beta-cx_{2}^\gamma=0)$. We have that the germ of $f_{0,t}$ at the point $p_{j}(t)$ belongs to the ideal generated by $x_{0}(t)$ and $(x_{1}^\beta-cx_{2}^\gamma)(t)$. Hence we can write the function $h_{0t}$ of expression \label{eq3} as
$$h_{0t}=x_{0}(t)h_{01t}+(x_{1}^\beta(t)-cx_{2}^\gamma(t))h_{02t}$$
where $h_{01t}, h_{02t} \in \mathcal O_{2}$.  Therefore we can repeat the argument of Lemma \ref{casomenor} and extract the factor $s^{\alpha\beta\gamma}$. We conclude, as above, that $V_{\tau}(t)$ is also a fiber when we have $\beta\gamma>\alpha(\beta+\gamma)$.

Hence to complete the proof for the case $\beta\gamma>\alpha(\beta+\gamma)$ we need the following result:
\begin{lemma}  \label{casomaior}  If $\beta\gamma>\alpha(\beta+\gamma)$ then any orbit of the vector field $S$ contained in some coordinate plane at $p_{j}(t)$ and which extends globally as a singular curve of the foliations $\mathcal F_{t}$ is a fiber of the mapping $f_{t}$ for fixed $t$. 
\end{lemma}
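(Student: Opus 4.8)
The plan is to mimic the argument of Lemma \ref{casomenor} --- extract the common power of $s$ from $f_t$ along the orbit and then use that a morphism from a complete connected curve to an affine variety is constant --- but with the bookkeeping adapted to the exceptional weight $\beta\gamma$. Since $1<\alpha<\beta<\gamma$ already gives $\alpha\gamma<\beta(\alpha+\gamma)$ and $\alpha\beta<\gamma(\alpha+\beta)$, the only coordinate for which the naive extraction of $s^{\alpha\beta\gamma}$ breaks down is the one of weight $\beta\gamma$, namely $x_0(t)$. I would therefore split into the three possible coordinate planes through $p_j(t)$. If the orbit lies in $\{x_1(t)=0\}$ or $\{x_2(t)=0\}$, substituting a parametrization of the orbit into $f_t=[P_0(t)^\alpha:P_1(t)^\beta:P_2(t)^\gamma]$ and dividing by the common power of $s$ leaves two of the three reduced components equal to units; together with the fact that, away from $I(t)$, $f_t(\hat V(t))$ stays close to $f_0(V_\tau)=\tau$, which lies on exactly one of the three lines, this shows that the extension of $f_t|_{\hat V(t)}$ to a morphism from the normalization of the complete curve $\hat V(t)$ takes values in $\mathbb{P}^2$ minus two lines, an affine variety; hence $f_t|_{\hat V(t)}$ is constant and $\hat V(t)$ is a fiber.

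The crux is the plane $\{x_0(t)=0\}$, where the orbit is locally $\{x_0(t)=0,\ x_1^\beta(t)-c\,x_2^\gamma(t)=0\}$, with closure $\hat V(t)=V_\tau(t)$ and $\tau\in\mathcal S_{\ell_0}(\mathcal G_0)$ at $t=0$. Parametrizing it, after rescaling, as $s\mapsto(0,s^\gamma,s^\beta)$ and substituting gives
\[
f_t(\delta(s))=\bigl[\,s^{\alpha(\beta+\gamma)}h_{0t}(\delta(s))^\alpha\ :\ s^{\beta\gamma}u_{1t}(\delta(s))^\beta\ :\ s^{\beta\gamma}u_{2t}(\delta(s))^\gamma\,\bigr],
\]
and since $\beta\gamma>\alpha(\beta+\gamma)$ only $s^{\alpha(\beta+\gamma)}$ can be removed, after which the two reduced components attached to $\ell_1$ and $\ell_2$ still vanish at $p_j(t)$. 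The key claim is that $h_{0t}$ vanishes along the orbit, equivalently $P_0(t)|_{\hat V(t)}\equiv0$, equivalently $f_t(\hat V(t))\subset\ell_0$. Granting it, write $h_{0t}(\delta(s))=s^e\hat h(\delta(s))$ with $\hat h(p_j)\neq0$; the condition that the first component of the reduced triple vanish identically forces $\alpha e>\beta\gamma-\alpha(\beta+\gamma)$, and then the extended morphism sends $p_j(t)$ to $[0:u_{1t}(p_j)^\beta:u_{2t}(p_j)^\gamma]$, a point of $\ell_0$ distinct from $a=\ell_0\cap\ell_1$ and $b=\ell_0\cap\ell_2$; away from $I(t)$ the image stays close to $\tau\in\ell_0\setminus\{a,b\}$. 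Thus the extended morphism maps the complete connected curve $\hat V(t)$ into $\ell_0\setminus\{a,b\}$, which forces it to be constant, and $\hat V(t)$ is a fiber of $f_t$.

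The main obstacle is the key claim $f_t(\hat V(t))\subset\ell_0$, which cannot be obtained from a bare Bezout count --- the inequality $\beta\gamma>\alpha(\beta+\gamma)$ makes the relevant degree estimates go the wrong way --- and must be read off from the foliation. Here I would use that $\hat V(t)\setminus I(t)$ is a Kupka component of $\mathcal F_t$ whose transverse type is the prescribed hyperbolic pull-back type, constant under the deformation by the stability of the Kupka phenomenon and of the quasi-homogeneous singularities (Section \ref{section5.1} and Proposition \ref{prop5.5}); since a hyperbolic singularity has exactly two separatrices, $\mathcal F_t$ admits exactly two local invariant hypersurfaces through $\hat V(t)\setminus I(t)$, one of which is a deformation of $(F_0=0)$, and the task is to identify this one with $(P_0(t)=0)$ --- using that $(P_0(t)=0)$ already contains $V_a(t)$, $V_b(t)$ and $I(t)$, and that $\mathcal G_0\in\mathcal A$ has no invariant algebraic curve besides $\ell_0,\ell_1,\ell_2$, so that no extraneous invariant hypersurface of $\mathcal F_t$ can interfere. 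Once Lemma \ref{casomaior} is available, the generic orbits in the case $\beta\gamma>\alpha(\beta+\gamma)$ follow as already sketched: $h_{0t}$ lies in the ideal of the finitely many plane fibers inside $(P_0(t)=0)$, so on a generic orbit $\delta$ the term $x_1x_2h_{0t}$ contributes $s$-order strictly larger than $\beta\gamma$ to $P_0(t)(\delta(s))$, the latter has order exactly $\beta\gamma$, one extracts $s^{\alpha\beta\gamma}$, and the affineness argument of Lemma \ref{casomenor} applies verbatim.
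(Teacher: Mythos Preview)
Your treatment of the planes $\{x_1(t)=0\}$ and $\{x_2(t)=0\}$ is fine; the paper handles only $\{x_0(t)=0\}$ explicitly, tacitly because the other two reduce to the situation of Lemma~\ref{casomenor}. The genuine gap is your ``key claim'' that $P_0(t)|_{\hat V(t)}\equiv 0$, i.e.\ that $(P_0(t)=0)$ contains $\hat V(t)$. Your proposed justification is to identify the local $\mathcal F_t$-invariant hypersurface through the Kupka component $\hat V(t)\setminus I(t)$ with $(P_0(t)=0)$. But at this point nothing says $(P_0(t)=0)$ is $\mathcal F_t$-invariant: the polynomials $P_i(t)$ were manufactured in Proposition~\ref{recupmapas} purely from the complete-intersection structure of $V_a(t),V_b(t),V_c(t)$, with no input from the foliation beyond those three codimension-two loci. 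Conversely, the separatrix hypersurface supplied by the Kupka structure is only defined in a tube around $\hat V(t)\setminus I(t)$ and has no a~priori reason to be algebraic or to globalise. Showing that a local separatrix hypersurface coincides with a pulled-back one is precisely the content of Lemma~\ref{lemafund}, which is proved \emph{after} all the $V_\tau(t)$ are already known to be fibres of $f_t$; invoking that mechanism here is circular. (There is also an internal inconsistency in your write-up: if $h_{0t}(\delta(s))\equiv 0$ one cannot then write it as $s^e\hat h$ with $\hat h(p_j)\neq 0$.)

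The paper's argument is entirely different and makes no use of invariance of $(P_0(t)=0)$. First, the ratio $P_2(t)^\gamma/P_1(t)^\beta$ restricted to $V_\tau(t)$ is a holomorphic $\mathbb C^*$-valued function --- a unit near each $p_j(t)$ by the local normal form, and close to its nonzero $t=0$ value on the compact complement --- hence a constant $\lambda$, so $f_t(V_\tau(t))\subset(Y-\lambda Z=0)\simeq\mathbb P^1$. Second, if $f_t|_{V_\tau(t)}$ were nonconstant, its degree computed via the fibre over $[0:\lambda:1]$ equals the intersection number $V_\tau(t)\cdot(P_0(t)^\alpha=0)=\nu^3/\alpha$, whereas the fibre over $[1:0:0]$ is supported on the $p_j(t)$ with total multiplicity at most $\#I(t)\cdot m=\frac{\nu^3}{\alpha\beta\gamma}\bigl(\beta\gamma-\alpha(\beta+\gamma)\bigr)=\nu^3\bigl(\tfrac1\alpha-\tfrac1\beta-\tfrac1\gamma\bigr)$. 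The resulting inequality $\tfrac1\alpha\le\tfrac1\alpha-\tfrac1\beta-\tfrac1\gamma$ is absurd, so $f_t|_{V_\tau(t)}$ is constant.
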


\begin{proof} Denote $({f}_{t})_{t \in D_{\epsilon'}}:\mathbb P^{3} \DashedArrow[->,densely dashed    ]\mathbb P^{2}$ by $f_{t}=[P_{0}^{\alpha}(t):P_{1}^{\beta}(t):P_{2}^{\gamma}(t)] $. As previously, let us consider an orbit of the vector field $S$ on a small neighborhood of an indeterminacy point of $f_{t}$, $B_{j}(p_{j}(t))$, $1\leq j\leq\frac{\nu^3}{\alpha\beta\gamma}$ and denote by $V_{\tau}(t)$ the global extension of this orbit to $\mathbb P^3$. Without loss of generality we can assume that the orbit is contained in the plane $(x_{0}(t)=0)$ and we can suppose that it can be parametrized as $s\to(0,s^{\gamma},s^{\beta})$. To simplify the notation we will omit the index $t$ in some expressions. After evaluating the mapping $f_{t}$ on this orbit, on a neighborhood of $p_{j}(t)$ we obtain:
$$f_{t}(\delta(s))=[s^{\alpha(\beta+\gamma)}h_{0}^\alpha:s^{\beta\gamma}u_1^{\beta}:s^{\beta\gamma}u_2^{\gamma}].$$ 
This can be written as 
\begin{equation}\label{eq5}
[s^{\alpha(\beta+\gamma)}\tilde{h_{0}}:s^{\beta\gamma}u_1^{\beta}:s^{\beta\gamma}u_2^{\gamma}]=[X(s):Y(s):Z(s)].
\end{equation} 

Firstly we prove that $f_{t}(V_{\tau}(t))$ is contained in a line of the form $(Y-\lambda Z=0)$ of $\mathbb P^2.$
Let us consider the meromorphic function with values in $\mathbb{P}^1$ given by $g_{t}(s)=\frac{Z(s)}{Y(s)}=\frac{u_2^{\gamma}}{u_1^{\beta}}$. When $s\to0$ this function goes to a constant $\lambda\neq0,\lambda\neq\infty$.
Observe that  for small $t$ the function $\frac{P_{1}^{\beta}}{P_{2}^{\gamma}}(t): V_{\tau}(t)\backslash\cup_{j}B_{j}(p_{j}(t)) \rightarrow \mathbb{P}^1$ stays near $\frac{P_{1}^{\beta}}{P_{2}^{\gamma}}(0): V_{\tau}(0)\backslash\cup_{j}B_{j}(p_{j}(0)) \rightarrow \mathbb{P}^1$. Note that since $V_{\tau}(0)$ is a fiber $\frac{P_{1}^{\beta}}{P_{2}^{\gamma}}(0)$ does not vanish. We conclude that $f_{t}(V_{\tau}(t))\subset(Y-\lambda Z=0)\simeq\mathbb P^1$.

If $\beta\gamma>\alpha(\beta+\gamma)$ we can write Equation \ref{eq5} as 
$$[\tilde{h}_{0}(s):s^m u_1^{\beta}:s^m u_2^{\gamma}]$$
where $m=\beta\gamma-\alpha(\beta+\gamma)$. Observe that when $s=0$ the function  $\tilde{h}_{0}(s)$ could vanish; in this case such a point corresponds to a indeterminacy point $p_{j}(t)$ of $f_{t}$ for some $j$.  At $p_{j}(t)$ we can write the first component of Equation \ref{eq5} as $\tilde{h}_{0}(s)=s^{\rho_j}\tilde{h}_{j}(s)$ where either $\tilde{h}_{j}(s) \in \mathcal O^{\ast}(\mathbb C,0)$ or $\tilde{h}_{0}\equiv0$; however in the second case we are done, that is, $V_{\tau}(t)$ is a fiber of $f_{t}$. At $p_{j}(t)$ we have two possibilities:

First case: ${\rho_j}< m$. In this case we can write Equation \ref{eq5} as
 \begin{equation}\label{eq6}
[\tilde{h}_{j}(s):s^{m-{\rho_j}} u_1^{\beta}:s^{m-{\rho_j}} u_2^{\gamma}].
\end{equation}
If $s\to0$ the image goes to $[1:0:0]$, hence ${f_{t}}|_{V_{\tau}(t)}(p_{j}(t))=[1:0:0]$.

Second case: ${\rho_j}\geq m$. We can write Equation \ref{eq6} as
\begin{equation}\label{eq7}
[s^{{\rho_j}-m}\tilde{h}_{j}(s):  u_1^{\beta}: u_2^{\gamma}].
\end{equation} If $s\to0$ the image goes to $[a:\lambda:1]$ where $a\in\mathbb C$. This is due to the fact that the image of such a point belongs to the curve $(Y-\lambda Z=0)\simeq\mathbb P^1$ and hence we can write it as $[a:\lambda:1]$. Suppose that $f_{t}|_{V_{\tau}(t)}$ is not constant and consider the mapping $f_{t}|_{V_{\tau}(t)}:V_{\tau}(t) \to f_{t}(V_{\tau}(t))\subset(Y-\lambda Z=0)$ for fixed $t$. Denote $Q=\{j|\rho_{j}<m\}$. Note that $p\in V_{\tau}(t)$ and $f_{t}|_{V_{\tau}(t)}(p)=[1:0:0]$ imply that $p=p_j(t)$ for some $j \in Q$; that is, $(f_{t}|_{V_{\tau}(t)})^{-1}[1:0:0]=\{p_{j}(t),j \in Q\}$. Moreover, by Equation \ref{eq7} we have $mult(f_{t}|_{V_{\tau}(t)},p_j(t))=m-\rho_j$. In particular, the degree of $f_{t}|_{V_{\tau}(t)}$ is $$deg(f_{t}|_{V_{\tau}(t)})=\sum_j(m-\rho_j).$$ On the other hand, if $p\in(f_{t}|_{V_{\tau}(t)})^{-1}[0:\lambda:1]$ then $(P_{0}^{\alpha}(p)=0)$ and so $mult(f_{t}|_{V_{\tau}(t)},p)$ is equal to the intersection number of $(P_{0}^{\alpha}(t)=0)$ and $V_\tau(t)$ at $p$. Hence 
$$deg(f_{t}|_{V_{\tau}(t)})={V_{\tau}(t)}.P_{0}^{\alpha}(t)=deg({V_{\tau}(t)})\times deg(P_{0}^{\alpha}(t))=\frac{\nu^3}{\alpha}=\sum_j(m-\rho_j).$$ 
But $(m-\rho_j)\leq m=\beta\gamma-\alpha(\beta+\gamma)$ and so 
  $$\sum_{j\in Q}(m-\rho_j)\leq\# Q\times m\leq\frac{\nu^3}{\alpha\beta\gamma}\times (\beta\gamma-\alpha(\beta+\gamma))={\nu^3}(\frac{1}{\alpha}-\frac{1}{\beta}-\frac{1}{\gamma})$$
  which implies that $\frac{1}{\alpha}\leq\frac{1}{\alpha}-\frac{1}{\beta}-\frac{1}{\gamma}$ and we arrive to a contradiction. Therefore, $Q=\emptyset$, $f_{t}|_{V_{\tau}(t)}$ is a constant and ${V_{\tau}(t)}$ is a fiber of $f_{t}$. \end{proof}

\subsubsection{Part 2} Let us now define a family of foliations $(\mathcal {G}_{t})_{t\in D_{\epsilon}}, \mathcal {G}_{t} \in  \mathcal A$ (see Section \ref{section4}) such that  $\mathcal{F}_{t}= f_{t}^{ \ast}(\mathcal{G}_{t})$ for all $t\in D_{\epsilon}$.  Firstly we consider the case $n=3.$ Let $M_{[\beta \gamma,\alpha \gamma,\alpha \beta]}(t)$ be the family of ``complex algebraic threefolds" obtained from $\mathbb P^3$ by blowing-up with weights $(\beta \gamma,\alpha \gamma,\alpha \beta)$ at the $\frac{\nu^{3}}{\alpha \beta \gamma}$ points $p_{1}(t),..., p_{j}(t),...,p_{\frac{\nu^{3}}{\alpha \beta \gamma}}(t)$ corresponding to $I(t)$ of $\mathcal F_{t}$; and denote by $$\pi_{w}(t):M_{[\beta \gamma,\alpha \gamma,\alpha \beta]}(t) \to \mathbb P^3$$ the blowing-up map. The exceptional divisor of $\pi_{w}(t)$ consists of ${\frac{\nu^{3}}{\alpha \beta \gamma}}$ orbifolds \break $E_{j}(t)=\pi_{w}(t)^{-1}(p_{j}(t)),$ $1\leq j \leq {\frac{\nu^{3}}{\alpha \beta \gamma}}$, which are weighted projective planes of the type $\mathbb P_{[\beta \gamma,\alpha \gamma,\alpha \beta]}^2$. Each of these has three lines of singular points of $M_{[\beta \gamma,\alpha \gamma,\alpha \beta]}(t)$, all isomorphic to weighted projective lines, but these singularities will not interfere our arguments (for more detail see \cite{mamor} ex. 3.6 p 957).

More precisely, if we blow-up $\mathcal{F}_{t}$ at the point $p_{j}(t)$, then the restriction of the strict transform $\pi_{w}^{\ast}\mathcal{F}_{t}$ to the exceptional divisor $E_{j}(t)=\mathbb P_{[\beta \gamma,\alpha \gamma,\alpha \beta]}^2$ is the same quasi-homogeneous $1$-form that defines $\mathcal{F}_{t}$ at the point $p_{j}(t)$. We have that $E_{j}(t)$ is birationally equivalent to $\mathbb P^2$; by an abuse of notation, we will denote this property by $E_j(t) \simeq {\mathbb P}^2$. It follows that we can push-forward the foliation to $\mathbb P^2$. With this process we produce a family of holomorphic foliations in $\mathcal A$. This family is the ``holomorphic path'' of candidates to be a deformation of $\mathcal G_{0}$. In fact, since $\mathcal A$ is an open set we can suppose that this family is inside $\mathcal A$. 
We fix the exceptional divisor $E_1(t)$ to work with and we denote by $\mathcal{G}_t$ the restriction of $\pi_w^*\mathcal{F}_t$ to $E_1(t)$. As we have seen, this process produces foliations in $\mathcal A$ up to a linear automorphism of $\mathbb P^2$. Consider the family of mappings ${f}_{t}:\mathbb P^{3} \DashedArrow[->,densely dashed    ]\mathbb P^{2}$, ${t \in D_{\epsilon'}}$ defined in Proposition \ref{recupmapas}. We will consider the family $(f_{t})_{t\in D_{\epsilon}}$ as a family of rational maps  $f_{t}:\mathbb P^3  \DashedArrow[->,densely dashed    ] E_{1}(t)$; we decrease $\epsilon$ if necessary. Note that the map $${f}_{t}\circ \pi_{w}(t):M_{[\beta \gamma,\alpha \gamma,\alpha \beta]}(t) \backslash \cup_{j} E_{j}(t) \to E_{1}(t) \simeq\mathbb P^2 $$ extends holomorphically, that is, as an orbifold mapping, to 
$$\hat{f}_{t}:M_{[\beta \gamma,\alpha \gamma,\alpha \beta]}(t) \to  \mathbb P_{[\beta \gamma,\alpha \gamma,\alpha \beta]}^2 \simeq E_{1}(t)\simeq\mathbb P^2.$$
This is due to the fact that each orbit of the vector field $S_t$ determines an equivalence class in $\mathbb P_{[\beta \gamma,\alpha \gamma,\alpha \beta]}^2$ and is a fiber of the map $$(x_{0}(t),x_{1}(t),x_{2}(t))\to(x_{0}^\alpha(t), x_{1}^\beta(t),x_{2}^\gamma(t)).$$

The mapping  $f_t$ can be interpreted as follows. Each fiber of $f_t$ meets $p_{j}(t)$ once, which implies that each fiber of $\hat{f}_{t}$ cuts $E_{1}(t)$ once outside of the three singular curves in $[M_{[\beta \gamma,\alpha \gamma,\alpha \beta]}(t) \cap E_{1}(t)]$. Since $M_{[\beta \gamma,\alpha \gamma,\alpha \beta]}(t) \backslash \cup_{j} E_{j}(t)$ is biholomorphic to $\mathbb P^3 \backslash I(t)$, after identifying $E_{1}(t)$ with $\mathbb P_{[\beta \gamma,\alpha \gamma,\alpha \beta]}^2$, we can imagine that if $q \in M_{[\beta \gamma,\alpha \gamma,\alpha \beta]}(t) \backslash \cup_{j} E_{j}(t)$ then $\hat{f}_{t}(q)$ is the intersection point of the fiber $\hat{f}_{t}^{-1}(\hat{f}_{t}(q))$ with $E_{1}(t)$. We obtain a mapping $$\hat{f}_{t}:M_{[\beta \gamma,\alpha \gamma,\alpha \beta]}(t) \to \mathbb P^2.$$
It can be extended over the singular set of $M_{[\beta \gamma,\alpha \gamma,\alpha \beta]}(t)$ using Riemann's  Extension Theorem. This is due to the fact that the orbifold $M_{[\beta \gamma,\alpha \gamma,\alpha \beta]}(t)$ has singular set of codimension $2$ and these singularities are of the quotient type; therefore it is a normal complex space. We shall also denote this extension by $\hat{f}_{t}$ to simplify the notation. Observe that the blowing-up with weights $(\beta \gamma,\alpha \gamma,\alpha \beta)$ can completely solve the indeterminacy set of $f_t$ for each $t$. 

With all these ingredients we can define the foliation $\tilde{\mathcal{F}}_{t}={f}_{t}^\ast(\mathcal G_{t}) \in PB(\Gamma-1,\nu,\alpha,\beta,\gamma)$. This foliation is a deformation of $\mathcal{F}_0$.  Based on the previous discussion let us denote ${\mathcal{F}_{1}}({t})= \pi_{w}(t)^{\ast} ({\mathcal{F}}_{t})$ and $\hat{\mathcal{F}_{1}}({t})= \pi_{w}(t)^{\ast} (\tilde{\mathcal{F}}_{t})$. 
\begin{lemma}If ${\mathcal{F}_{1}}({t})$ and $\hat{\mathcal{F}_{1}}({t})$ are the foliations defined previously, we have that $${\mathcal{F}_{1}}({t})|_{{E_{1}(t)}\simeq \mathbb P_{[\beta \gamma,\alpha \gamma,\alpha \beta]}^2}={\hat {\mathcal G}_{t}}=\hat{\mathcal{F}_{1}}({t})|_{{E_{1}(t)}\simeq \mathbb P_{[\beta \gamma,\alpha \gamma,\alpha \beta]}^2}$$ where ${\hat {\mathcal G}_{t}}$ is the foliation induced on ${E_{1}(t)}\simeq \mathbb P_{[\beta \gamma,\alpha \gamma,\alpha \beta]}^2$ by the quasi-ho\-mo\-ge\-ne\-ous $1$-form $\eta_{p_{1}(t)}$. 
\end{lemma}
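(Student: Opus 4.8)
The plan is to show that the two foliations $\mathcal F_1(t)|_{E_1(t)}$ and $\hat{\mathcal F}_1(t)|_{E_1(t)}$ coincide by tracing back each to an explicit $1$-form on the weighted projective plane $E_1(t)\simeq\mathbb P^2_{[\beta\gamma,\alpha\gamma,\alpha\beta]}$ and checking they agree. First I would recall, from Lemma \ref{lema5.6} and the discussion immediately following it, that near each indeterminacy point $p_1(t)\in I(t)$ the foliation $\mathcal F_t$ is given in the local chart $(x_0(t),x_1(t),x_2(t))$ by a quasi-homogeneous $1$-form $\eta_{p_1(t)}$ of type $[\beta\gamma:\alpha\gamma:\alpha\beta]$, and that this $1$-form is homogeneous with respect to the Euler-type vector field $S_t=\beta\gamma\,x_0\partial_{x_0}+\alpha\gamma\,x_1\partial_{x_1}+\alpha\beta\,x_2\partial_{x_2}$, i.e.\ $i_{S_t}\eta_{p_1(t)}=0$ and $L_{S_t}\eta_{p_1(t)}=m\,\eta_{p_1(t)}$. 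By the standard computation of the blow-up with weights $(\beta\gamma,\alpha\gamma,\alpha\beta)$, the restriction of the strict transform $\pi_w(t)^*\mathcal F_t$ to the exceptional divisor $E_j(t)=\mathbb P^2_{[\beta\gamma,\alpha\gamma,\alpha\beta]}$ is precisely the foliation defined by this same quasi-homogeneous $1$-form, because the $1$-form is $S_t$-basic; this is exactly how $\hat{\mathcal G}_t$ was defined. Hence $\mathcal F_1(t)|_{E_1(t)}=\hat{\mathcal G}_t$ follows directly once one unwinds the definitions, together with Proposition \ref{prop5.5} guaranteeing the quasi-homogeneous type is preserved under the deformation $t\mapsto\mathcal F_t$.

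Next I would handle $\hat{\mathcal F}_1(t)|_{E_1(t)}$. By construction $\tilde{\mathcal F}_t=f_t^*(\mathcal G_t)$, where $\mathcal G_t$ is itself \emph{defined} as the push-forward to $E_1(t)\simeq\mathbb P^2$ of the restriction $\pi_w(t)^*\mathcal F_t|_{E_1(t)}$, i.e.\ of the quasi-homogeneous foliation given by $\eta_{p_1(t)}$. So I must show that pulling back by $f_t$ and then blowing up and restricting to $E_1(t)$ returns the same foliation one started with. The key point is the diagram compatibility established in Part 1: the orbifold map $\hat f_t:M_{[\beta\gamma,\alpha\gamma,\alpha\beta]}(t)\to\mathbb P^2$ extending $f_t\circ\pi_w(t)$ restricts on the exceptional divisor $E_1(t)$ to (a representative of) the identity under the identification $E_1(t)\simeq\mathbb P^2_{[\beta\gamma,\alpha\gamma,\alpha\beta]}$, since each $S_t$-orbit is a fiber of the weighted quotient map $(x_0,x_1,x_2)\mapsto(x_0^\alpha,x_1^\beta,x_2^\gamma)$ and meets $E_1(t)$ in exactly one point. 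Therefore $\hat{\mathcal F}_1(t)|_{E_1(t)}=\pi_w(t)^*(f_t^*\mathcal G_t)|_{E_1(t)}=\hat f_t^*(\mathcal G_t)|_{E_1(t)}=(\hat f_t|_{E_1(t)})^*(\mathcal G_t)=\mathrm{id}^*(\mathcal G_t)=\mathcal G_t=\hat{\mathcal G}_t$, which is the desired equality.

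To make the middle step rigorous I would write $f_t$ in the local coordinates near $p_1(t)$ using the normal forms \eqref{eq3} for $P_0(t),P_1(t),P_2(t)$: there, $f_t=(P_0^\alpha,P_1^\beta,P_2^\gamma)$ with $P_i(t)=u_{it}x_i(t)+(\text{higher order in the }S_t\text{-weight})$, and the pull-back $f_t^*\mathcal G_t$ is represented by a $1$-form whose $S_t$-weighted lowest-order part is exactly $\eta_{p_1(t)}$ (the higher-order $h_{it}$ terms raise the $S_t$-degree, so they vanish upon restriction to the exceptional divisor). Comparing with $\eta$ from \eqref{eta}, which is the local expression of $\mathcal F_t$ itself near $p_1(t)$, one sees the two leading parts coincide; restricting both strict transforms to $E_1(t)$ kills the higher-order tails and yields $\hat{\mathcal G}_t$ on both sides.

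The main obstacle I expect is the bookkeeping in that last comparison: one must be careful that the $1$-form representing $f_t^*\mathcal G_t$, after removing the common factor coming from the critical divisor of the weighted blow-up, really does have the quasi-homogeneous form $\eta_{p_1(t)}$ as its weighted-leading term, uniformly in $t$ — this uses that $(f_t,\mathcal G_t)$ stays a generic pair for small $t$ (so no singularity of $\mathcal G_t$ falls on the relevant branch locus) and that $\mathcal G_t\in\mathcal A$, so its local type at the corners is the Hyperbolic-type normal form of Section \ref{section5.1}. Once the weighted-leading terms are matched, the identification $\hat f_t|_{E_1(t)}=\mathrm{id}$ modulo the automorphism of $\mathbb P^2$ used to place $\mathcal G_t$ in $\mathcal A$ is what closes the argument, and the equality of the two restricted foliations follows.
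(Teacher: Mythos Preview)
Your proposal is correct and follows essentially the same approach as the paper. The paper's proof is extremely terse: for the first equality it simply notes that $\eta_{p_1(t)}$ satisfies $i_{S_t}\eta_{p_1(t)}=0$ and hence descends to $E_1(t)$, and for the second it invokes the ``geometrical interpretation'' of $\hat f_t$ (namely that $\hat{\mathcal F}_1(t)=\hat f_t^*\mathcal G_t$ and $\hat f_t|_{E_1(t)}$ is the identity under the identification of $E_1(t)$ with the target). You have unpacked both points correctly, and your additional local-coordinate verification via the weighted leading part is extra detail the paper omits but which is consistent with its argument.
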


\begin{proof} In a neighborhood of $p_1(t) \in I(t)$, ${\mathcal{F}}_{t}$ is represented by the quasi-homogeneous $1$-form $\eta_{p_{1}(t)}$. This $1$-form satisfies $i_{S_t}\eta_{p_{1}(t)}=0$ and therefore naturally defines a foliation on the weighted projective space $E_{1}(t)\simeq \mathbb P_{[\beta \gamma,\alpha \gamma,\alpha \beta]}^2$. This proves the first equality. The second equality follows from the geometrical interpretation of the mapping $\hat{f}_{t}:M_{[\beta \gamma,\alpha \gamma,\alpha \beta]}(t) \to  \mathbb P_{[\beta \gamma,\alpha \gamma,\alpha \beta]}^2 \simeq \mathbb P^2$, since $\hat{\mathcal{F}_{1}}({t})=f_{1}(t)^{*}(\mathcal {G}_{t})$. \end{proof}

Now we use the fact that $\mathbb P_{[\beta \gamma,\alpha \gamma,\alpha \beta]}^2 \simeq \mathbb P^2$  to obtain the equality 
$${\mathcal{G}_{t}}={\mathcal{F}_{1}}({t})|_{{E_{1}(t)}\simeq \mathbb P^2} =\hat{\mathcal{F}_{1}}({t})|_{{E_{1}(t)}\simeq \mathbb P^2}.$$ Let ${\tau}_{1}(t)$ be a singularity of $\mathcal G_{t}$ outside the three invariant straight lines. Since the map $t \to {\tau}_{1}(t) \in \mathbb P^2$ is holomorphic, there exists a holomorphic family of automorphisms of $\mathbb P^2$, $t \to H(t)$ such that  ${\tau}_{1}(t)=[a:b:c]$ $\in {{E_{1}(t)}\simeq \mathbb P^2}$  is kept fixed. Observe that such a singularity has non algebraic separatrices at this point. Fix a local analytic coordinate system $(x_t,y_t)$ at 
${\tau}_{1}(t)$ such that the local separatrices are $(x_t=0)$ and $(y_t=0)$, respectively. Observe that the local smooth hypersurfaces along $\hat V_{\tau_1(t)}=\hat{f}_{t}^{-1}({\tau}_{1}(t))$ defined by $\hat X_t:=(x_t\circ\hat{f}_{t}=0)$ and 
$\hat Y_t:=(y_t\circ\hat{f}_{t}=0)$ are invariant for $\hat{\mathcal{F}_{1}}({t})$. Furthermore, they meet transversely along  $\hat V_{\tau_1(t)}$. On the other hand, $\hat V_{\tau_1(t)}$ is also contained in the Kupka set of ${\mathcal{F}_{1}}({t})$.  Therefore there are two local smooth hypersurfaces $X_t:=(x_t\circ\hat{f}_{t}=0)$ and 
$Y_t:=(y_t\circ\hat{f}_{t}=0)$ invariant for ${\mathcal{F}_{1}}({t})$ such that:
\begin{enumerate}
\item $X_t$ and $Y_t$ meet transversely along $\hat V_{\tau_1(t)}$.
\item$X_t\cap\pi_{w}(t)^{-1}(p_{1}(t))=(x_t=0)=\hat X_t\cap\pi_{w}(t)^{-1}(p_{1}(t))$ and \break $Y_t\cap\pi_{w}(t)^{-1}(p_{1}(t))=(y_t=0)=\hat Y_t\cap\pi_{w}(t)^{-1}(p_{1}(t))$ (because ${\mathcal{F}_{1}}({t})$ and $\hat{\mathcal{F}_{1}}({t})$) coincide on ${{E_{1}(t)}\simeq \mathbb P^2}$).
\item $X_t$ and $Y_t$ are deformations of $X_0=\hat X_0$ and $Y_0=\hat Y_0$, respectively. 
\end{enumerate}

\begin{lemma}\label{lemafund}$X_t=\hat X_t$ for small $t$.
\end{lemma}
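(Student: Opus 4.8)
The plan is to show that the two local invariant hypersurfaces $X_t$ and $\hat X_t$, which a priori are only known to agree on the exceptional divisor $E_1(t)$ and to both contain $\hat V_{\tau_1(t)}$ in their intersection with the Kupka set, must in fact coincide as germs along $\hat V_{\tau_1(t)}$. The point is that $\hat V_{\tau_1(t)}$ is a Kupka component of both ${\mathcal F}_1(t)$ and $\hat{\mathcal F}_1(t)$ — indeed of the same foliation, since ${\mathcal F}_1(t)$ and $\hat{\mathcal F}_1(t)$ have the same restriction to $E_1(t)$ and are both pull-backs under mappings that agree outside the divisors — so the local Kupka–Reeb normal form applies. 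First I would invoke the local product structure: near a generic point $q$ of $\hat V_{\tau_1(t)}\setminus \pi_w(t)^{-1}(p_1(t))$, the foliation is, up to biholomorphism, the product of a $2$-dimensional hyperbolic-type singularity (whose transversal type is that of $\tau_1(t)$, a non-resonant, non-real corner) with a regular factor. In that local model the only two germs of invariant smooth hypersurfaces through the Kupka set are the two separatrices $(x_t=0)$ and $(y_t=0)$; in particular there is no continuous family of invariant hypersurfaces, the separatrices being rigid because $\lambda_2/\lambda_1\notin\mathbb R$ (so in particular $\notin\mathbb Q_+$). Hence, along the connected set $\hat V_{\tau_1(t)}\setminus\pi_w(t)^{-1}(p_1(t))$, each of $X_t$ and $\hat X_t$ must locally be one of these two separatrix-hypersurfaces, and the labelling is determined by which one it is — $X_t$ corresponds to $(x_t=0)$, $Y_t$ to $(y_t=0)$ — so $X_t$ and $\hat X_t$ agree on a dense open subset of $\hat V_{\tau_1(t)}$.

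The next step is to propagate this agreement across the exceptional divisor. By property (2) in the list preceding the lemma, $X_t$ and $\hat X_t$ already coincide on $\pi_w(t)^{-1}(p_1(t))$, namely both cut it in $(x_t=0)$; and by property (3) both are deformations of the same hypersurface $X_0=\hat X_0$. So $X_t\cap \hat X_t$ is an analytic subset of $X_t$ containing both a dense open subset of $\hat V_{\tau_1(t)}$ and the slice over $p_1(t)$; since $X_t$ is a smooth (irreducible) germ of hypersurface along the connected variety $\hat V_{\tau_1(t)}$, and an analytic subset of a connected complex manifold that contains a nonempty open set is everything, I conclude $X_t\subseteq \hat X_t$, and by symmetry (both are smooth of the same dimension) $X_t=\hat X_t$ as germs along $\hat V_{\tau_1(t)}$. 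One should be slightly careful that $\hat V_{\tau_1(t)}$ meets the exceptional locus only along the quotient singularities of $M_{[\beta\gamma,\alpha\gamma,\alpha\beta]}(t)$; but by Riemann extension and normality of $M_{[\beta\gamma,\alpha\gamma,\alpha\beta]}(t)$ (already used above to extend $\hat f_t$), equality of the two hypersurfaces off a codimension-$\geq 2$ set forces equality everywhere.

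The main obstacle I anticipate is making the first step fully rigorous at the non-generic points of $\hat V_{\tau_1(t)}$, i.e.\ where the Kupka structure degenerates — precisely the points lying over $I(t)$, where the transversal model is the quasi-homogeneous $1$-form $\eta_{p_1(t)}$ rather than a simple corner. There the two-separatrices dichotomy is not immediate and one must instead argue on $E_1(t)\simeq\mathbb P^2$ itself, using that $\mathcal G_t\in\mathcal A$ has, by Theorem~A, no invariant algebraic curves beyond the three lines, so the separatrix of $\mathcal G_t$ at $\tau_1(t)$ is genuinely non-algebraic and the corresponding hypersurfaces $\hat X_t$, $X_t$ are forced to be the closures of these transcendental leaves through $p_1(t)$ — which pins them down uniquely. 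So the cleanest route is: first establish the equality on the open Kupka part by the product-structure/rigidity argument, then extend across the bad locus by the identity principle together with normality, and finally use property (2) to fix which separatrix is which. I expect the bookkeeping about the weighted blow-up and the normality extension to be the only genuinely delicate point; the separatrix rigidity itself is classical given the hyperbolicity hypothesis $\lambda_2/\lambda_1\in\mathbb C\setminus\mathbb R$.
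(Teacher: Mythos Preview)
Your argument contains a circularity. The separatrix-rigidity step asserts that near a generic point of $\hat V_{\tau_1(t)}$ ``the only two germs of invariant smooth hypersurfaces through the Kupka set are the two separatrices $(x_t=0)$ and $(y_t=0)$'', and then places both $X_t$ and $\hat X_t$ among these two. But $X_t$ is a separatrix of $\mathcal F_1(t)$ while $\hat X_t$ is a separatrix of $\hat{\mathcal F}_1(t)$; these are \emph{two different foliations}, and the goal of the whole section is precisely to show they coincide. Your parenthetical justification --- ``indeed of the same foliation, since $\mathcal F_1(t)$ and $\hat{\mathcal F}_1(t)$ \ldots\ are both pull-backs under mappings that agree outside the divisors'' --- assumes the conclusion of Theorem~B: $\mathcal F_1(t)$ is \emph{not} known to be a pull-back. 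Even if both foliations have Kupka structure along $\hat V_{\tau_1(t)}$ with the same transversal eigenvalue ratio, rigidity only gives each foliation its own pair of separatrix-hypersurfaces; the local product submersions realizing the Kupka normal forms for $\mathcal F_1(t)$ and $\hat{\mathcal F}_1(t)$ are in principle different, so there is no reason their separatrices should be the same hypersurface in the ambient space. Sharing a trace on the divisor $E_1(t)$ does not force two smooth hypersurfaces to agree as germs.

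The paper bypasses this entirely: it never compares the two foliations directly. Instead it uses only the fibration $\hat f_t$, which is available independently of any relation between $\mathcal F_1(t)$ and $\hat{\mathcal F}_1(t)$. One slices $X_t$ by the level sets $H_\epsilon=(y_t\circ\hat f_t=\epsilon)$; the slices $\Sigma_\epsilon=X_t\cap H_\epsilon$ are compact curves (since $X_t$ is a small deformation of $X_0=\hat X_0$ and $\Sigma_0=\hat V_{\tau_1(t)}$), so by the maximum principle each $\hat f_t(\Sigma_\epsilon)$ is a point. Thus $\hat f_t(X_t)$ is a curve $C$, i.e.\ $X_t=\hat f_t^{-1}(C)$, and property~(2) then pins down $C=(x_t=0)$, giving $X_t=\hat f_t^{-1}(x_t=0)=\hat X_t$. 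The missing idea in your proposal is this compactness/maximum-principle step showing that $X_t$ is saturated by fibers of $\hat f_t$; without it, the boundary condition on $E_1(t)$ alone is not enough.
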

\begin{proof} Let us consider the projection $\hat{f}_{t}:M_{[\beta \gamma,\alpha \gamma,\alpha \beta]}(t) \to  \mathbb P_{[\beta \gamma,\alpha \gamma,\alpha \beta]}^2 \simeq \mathbb P^2$ on a neighborhood of the regular fibre  $\hat V_{\tau_1(t)}$, and fix local coordinates $x_t,y_t$ on $\mathbb P^2$ such that $X_t:=(x_t\circ\hat{f}_{t}=0)$. For small $\epsilon$, let $H_{\epsilon}=(y_t\circ\hat{f}_{t}=\epsilon)$. Thus $\hat\Sigma_\epsilon=\hat X_t\cap H_\epsilon$ are (vertical) compact curves, deformations of $\hat\Sigma_0=\hat V_{\tau_1(t)}$. Set $\Sigma_\epsilon=X_t\cap\hat H_\epsilon$. The $\Sigma_\epsilon's$, as the $\hat\Sigma_\epsilon's$, are compact curves (for $t$ and $\epsilon$ small), since $X_t$ and $\hat X_t$ are both deformations of the same $X_0$. Thus for small $t$, $X_t$ is close to $\hat X_t$. It follows that $\hat{f}_{t}(\Sigma_\epsilon)$ is an analytic curve contained in a small neighborhood of ${\tau_1(t)}$, for small $\epsilon$. By the maximum principle, we must have that $\hat{f}_{t}(\Sigma_\epsilon)$ is a point, so that  $\hat{f}_{t}(X_t)=\hat{f}_{t}(\cup_{\epsilon}\Sigma_\epsilon)$ is a curve $C$, that is, $X_t=\hat{f}_{t}^{-1}(C)$. But $X_t$ and $\hat X_t$ intersect the exceptional divisor ${{E_{1}(t)}\simeq \mathbb P^2}$ along the separatrix $(x_t=0)$ of $\mathcal G_t$ through ${\tau_1(t)}$. This implies that $X_t=\hat{f}_{t}^{-1}(C)=\hat{f}_{t}^{-1}(x_t=0)=\hat X_t$.
\end{proof}
We have proved that the foliations $\mathcal F_t$ and $\tilde{\mathcal F_t}$ have a common local leaf: the leaf that contains $\pi_{w}(t)\left(X_t\backslash\hat V_{\tau_1(t)}\right)$ which is not algebraic. Let $D(t):=Tang(\mathcal{F}(t),\hat {\mathcal{F}}(t))$ be the set of tangencies between $\mathcal{F}(t)$ and $\hat {\mathcal{F}}(t)$. This set can be defined by $D(t)=\{ Z \in \mathbb {C}^4; \Omega(t) \wedge\hat {\Omega}(t)=0\},$ where  $\Omega(t)$ and  $\hat{\Omega}(t)$ define $\mathcal{F}(t)$ and $\hat{\mathcal{F}}(t)$, respectively. Hence it is an algebraic set. Since this set contains an immersed non-algebraic surface $X_t$, we necessarily have that  $D(t)=\mathbb P^3.$ This proves Theorem B in the case $n=3.$

Suppose now that $n\geq4$. The previous argument implies that if $\Upsilon$ is a generic $3-$plane in $\mathbb P^n$, we have $\mathcal{F}(t)_{|\Upsilon}=\hat{\mathcal{F}}(t)_{|\Upsilon}$. In fact, such planes cut transversely every strata of the singular set, and $I(t)$ consists of $\frac{\nu^{3}}{\alpha \beta \gamma} $ points. This implies that $f_{t}$ is generic for $|t|$ sufficiently small. We can then repeat the previous argument, finishing the proof of Theorem B.

Recall from Definition \ref{generic} the concept of a generic map. Let \break $f  \in BRM\left(n,\nu,\alpha,\beta,\gamma\right)$, $I(f)$ its indeterminacy locus and $\mathcal F$ a foliation on $\mathbb P^n$, $n\geq3$. Consider the following properties:

\begin{center}
\begin{minipage}{10cm}
$\mathcal{P}_1:$ If n=3, at any point $p_{j}\in I(f)$ $\mathcal F$ has the following local structure: there exists an analytic coordinate system $(U^{p_{j}},Z^{p_{j}})$ around $p_{j}$ such that $Z^{p_{j}}(p_{j})=0 \in ({\mathbb C^3,0})$ and $\mathcal F|_{(U^{p_{j}},Z^{p_{j}})}$ can be represented by a quasi-homogenous $1$-form $\eta_{p_{j}}$ (as described in the Lemma \ref{lema5.6}) such that
\begin{enumerate} 
\item[(a)] $Sing(d \eta_{p_{j}}) = {0}$,
\item[(b)] $0$ is a quasi-homogeneous singularity of the type $\left [{\beta}.{\gamma}:{\alpha}.{\gamma}:{\alpha}.{\beta}  \right ]$.
\end{enumerate}
If $n\geq4$, $\mathcal F$ has a local structure product: the situation for n=3 ``times'' a regular foliation in ${\mathbb C^{n-3}}$.
\end{minipage}
\end{center}

\begin{center}
\begin{minipage}{10cm}
$\mathcal{P}_2:$ There exists a fibre $f^{-1}(q)=V(q)$ such that $V(q)=f^{-1}(q)\backslash I(f)$ is contained in the Kupka-Set of $\mathcal F$ and $V(q)$ is not contained in $\bigcup_{i=0} ^ {i=2} (F_{i}=0)$.
\end{minipage}
\end{center}

\begin{center}
\begin{minipage}{10cm}
$\mathcal{P}_3:$ $V(q)$ has transversal type $X$, where $X$ is a germ of vector field on $({\mathbb C^2,0})$ with a non algebraic separatrix and such that $ 0 \in{\mathbb C^2}$ is a non-degenerate singularity with eigenvalues $\lambda_{1}$ and $\lambda_{2}$, $\frac{\lambda_{2}}{\lambda_{1}} \notin {\mathbb R}$.
\end{minipage}
\end{center}

Lemma \ref{lemafund} allows us to prove the following result:

\begin{main}\label{teoc}
In the conditions above, if properties $\mathcal{P}_1$, $\mathcal{P}_2$ and $\mathcal{P}_3$ hold then $\mathcal F$ is a pull back foliation, $\mathcal {F}= f^{*}(\mathcal{G})$, where $\mathcal{G}$ is of degree $d\geq2$ on $\mathbb P^2$ with three invariant lines in general position.  
\end{main}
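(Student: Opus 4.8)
The plan is to reverse-engineer the pull-back structure from the local data provided by $\mathcal{P}_1$, $\mathcal{P}_2$, $\mathcal{P}_3$, essentially running the argument of Section \ref{subsection5.4} backwards. First I would use $\mathcal{P}_2$ to fix the distinguished fibre $V(q) = f^{-1}(q)\setminus I(f)$, which lies in the Kupka set of $\mathcal{F}$ and is not contained in any $(F_i=0)$. The idea is that $V(q)$, being a Kupka component of codimension two with transversal type a non-degenerate hyperbolic singularity (by $\mathcal{P}_3$) admitting a non-algebraic separatrix, carries along it a locally-defined invariant hypersurface $X$ corresponding to that separatrix; this is exactly the situation analysed in Lemma \ref{lemafund}. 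Since $f$ is a fixed generic map in $BRM(n,\nu,\alpha,\beta,\gamma)$, I can blow up $\mathbb{P}^n$ with weights $(\beta\gamma,\alpha\gamma,\alpha\beta)$ along $I(f)$ to obtain $\hat f: M_{[\beta\gamma,\alpha\gamma,\alpha\beta]}\to\mathbb{P}^2$ as in Part 2, and pull back $X$ through $\hat f$: the key point (via the maximum principle argument of Lemma \ref{lemafund}) is that the local leaf of $\mathcal F$ containing $X$ must in fact be $\hat f^{-1}(C)$ for an analytic curve germ $C\subset\mathbb{P}^2$ through $q$, because $X$ meets the exceptional divisor $E_j\simeq\mathbb{P}^2$ along a fixed separatrix and $V(q)$ is a fibre of $f$.

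Next I would construct the candidate foliation $\mathcal{G}$ on $\mathbb{P}^2$. Property $\mathcal{P}_1$ guarantees that at each $p_j\in I(f)$ the foliation $\mathcal{F}$ is given by a quasi-homogeneous $1$-form $\eta_{p_j}$ of type $[\beta\gamma:\alpha\gamma:\alpha\beta]$ with isolated singularity of $d\eta_{p_j}$; since $i_{S}\eta_{p_j}=0$ for the Euler-type field $S$ (exactly as in Lemma \ref{lema5.6}), the form $\eta_{p_j}$ descends to the weighted projective plane $E_j\simeq\mathbb{P}_{[\beta\gamma,\alpha\gamma,\alpha\beta]}^2\simeq\mathbb{P}^2$, defining a foliation there. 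I would take $\mathcal{G}$ to be this induced foliation on (say) $E_1\simeq\mathbb{P}^2$, and check that it has the three coordinate lines $(x_0=0),(x_1=0),(x_2=0)$ invariant — these are the images of the coordinate hyperplanes $(F_i=0)$, which are invariant for $\mathcal{F}$ by the Kupka structure near $I(f)$ — so that they are three invariant lines in general position. The degree bound $d\geq 2$ follows from the degree $\Gamma$ of $\mathcal{F}$ together with the type $[\beta\gamma:\alpha\gamma:\alpha\beta]$ of the quasi-homogeneous singularity; alternatively, $d<2$ would force $\mathcal{G}$ to be a pencil-type or degenerate foliation incompatible with the transversal type in $\mathcal{P}_3$.

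Finally I would show $\mathcal{F}=f^*(\mathcal{G})$ globally. Both $\mathcal{F}$ and $f^*(\mathcal{G})$ are codimension-one foliations on $\mathbb{P}^n$ of the same degree $\Gamma-1$, and by the construction above they have the \emph{same} local leaf near $V(q)$: namely the non-algebraic leaf through $\pi_w(X\setminus \hat V)$, exactly as in the sentence following Lemma \ref{lemafund}. Consider the tangency set $D=\mathrm{Tang}(\mathcal{F},f^*\mathcal{G})=\{Z\in\mathbb{C}^{n+1}\,;\,\Omega\wedge\Omega'=0\}$, where $\Omega,\Omega'$ are homogeneous $1$-forms defining $\mathcal{F}$ and $f^*\mathcal{G}$; this is an algebraic subset of $\mathbb{P}^n$. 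Since it contains the immersed non-algebraic hypersurface coming from $X$, which is not contained in any proper algebraic subvariety, we must have $D=\mathbb{P}^n$, hence $\mathcal{F}=f^*(\mathcal{G})$. For $n\geq 4$ I would, as in \S\ref{subsection5.4}, restrict to a generic $3$-plane $\Upsilon$ transverse to all strata (so that $I(f)\cap\Upsilon$ is a finite set of points and $f|_\Upsilon$ is generic of the $n=3$ type), apply the $n=3$ case there, and then let $\Upsilon$ vary to conclude $\mathcal{F}=f^*(\mathcal{G})$ on all of $\mathbb{P}^n$.

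I expect the main obstacle to be the passage from the purely \emph{local} invariant hypersurface $X$ near $V(q)$ to a genuinely \emph{global} statement — specifically, showing that $\hat f(X)$ is an analytic \emph{curve} $C$ (not all of $\mathbb{P}^2$, and not higher-dimensional) and that $X=\hat f^{-1}(C)$ extends to an honest leaf whose closure is non-algebraic. This is the content of Lemma \ref{lemafund}, which is available, but applying it here requires knowing \emph{a priori} that $V(q)$ is a fibre of the fixed map $f$ and that the separatrix data in $\mathcal{P}_3$ matches the weighted blow-up picture; reconciling the abstract hypotheses $\mathcal{P}_1$–$\mathcal{P}_3$ with the concrete weighted-blow-up geometry of \S\ref{subsection5.4} is where the real work lies.
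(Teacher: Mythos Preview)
Your proposal is correct and follows essentially the same approach the paper indicates: the paper does not spell out a separate proof of Theorem~\ref{teoc} but simply says ``Lemma~\ref{lemafund} allows us to prove the following result,'' meaning one repeats the Part~2 argument of \S\ref{subsection5.4} (weighted blow-up along $I(f)$, define $\mathcal{G}$ as the foliation induced on $E_1\simeq\mathbb{P}^2$ by the quasi-homogeneous form from $\mathcal{P}_1$, then use Lemma~\ref{lemafund} together with $\mathcal{P}_2$--$\mathcal{P}_3$ to produce a common non-algebraic leaf and conclude via the tangency set). Your identification of the main obstacle --- making the local invariant hypersurface $X$ near $V(q)$ into a global fibred object via the maximum-principle step of Lemma~\ref{lemafund} --- is exactly the point the paper isolates as the key lemma.
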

\begin{remark} Note that when $\mathcal G$ does not have invariant algebraic curves and we perform the pull-back by $f$ as above, the indeterminacy set of $f$ does not satisfy the hypothesis of the theorem. Take for example $\mathcal G$ being Jouanoulou's foliation. In the case $n=3$ the indeterminacy set of $f$ is not a quasi-homogeneous singularity of  $\mathcal {F}= f^{*}(\mathcal{G}).$
 \end{remark}

\section{Appendix}

\subsection{The pull back's foliation degree}

We refer the reader to \cite{bea} for the basic theory of algebraic surfaces. We recall that $NS_{\mathbb Z}(\mathbb P^2)$ is the N\'eron-Severi group of divisors in $\mathbb P^2$. Denote by  $\mathcal {D}_{F}$ the divisor determined by the vanishing of the Jacobian determinant of  $F: {\mathbb P^2} \to {\mathbb P^2}$. It is locally defined by the vanishing of det $DF$, where $DF$ denotes de differential of $F$. Its support is the critical set of $F,$ and it satisfies the equation $$ K_{\mathbb P^2}=F^{*}K_{\mathbb P^2}+\mathcal {D}_{F}$$ where $K_{\mathbb{P}^2}$ is the canonical divisor. Note that $\mathcal {D}_{F}$ is a divisor in the domain of $F.$ It is a well known fact that a singular holomorphic foliation $\mathcal G$ on $\mathbb P^2$ has a cotangent bundle given by $T^*_\mathcal{G}=\mathcal O_{\mathbb P^2}(d-1)$ where $d$ is the foliation's degree \cite{br}.

The next result is part of a proposition extracted from \cite{Fa-Pe} pp 5. The statement used here is sufficient for our purposes.
 
\begin{proposition} \label{favrejorge}Suppose $F:(\mathbb P^2,\mathcal H)\to(\mathbb P^2,\mathcal G)$ is a dominant rational map. 
 Then one has $$F^*T^*_\mathcal{G} = T^*_\mathcal{H} - \mathcal {D}$$ in $NS_{\mathbb Z}(\mathbb P^2)$ for some (on necessarily effective) divisor with support included in the critical set of $F$ and satisfying $\mathcal {D} \leq \mathcal {D}_{F}.$  
\end{proposition}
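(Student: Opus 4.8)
The plan is to read off the identity at the level of divisor classes in $NS_{\mathbb Z}(\mathbb P^2)$ by pulling back the twisted $1$-form that defines $\mathcal G$, extracting its divisorial zeros along the critical set, and then inserting the ramification formula $K_{\mathbb P^2} = F^{*}K_{\mathbb P^2} + \mathcal D_F$ recalled above. First I would fix the standard bundle dictionary for foliations on a surface $X$: a foliation $\mathcal F$ is given by a \emph{saturated} twisted $1$-form $\omega_{\mathcal F} \in H^{0}(X, \Omega^{1}_{X}\otimes N_{\mathcal F})$, with $N_{\mathcal F}$ its normal bundle, whose zero locus has codimension $\geq 2$; dualizing the inclusion $T_{\mathcal F}\hookrightarrow TX$ and taking determinants gives $T^{*}_{\mathcal F} = K_{X} + N_{\mathcal F}$, additively in $NS_{\mathbb Z}$. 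For $\mathbb P^2$ this is consistent with the paper's normalization, since $N_{\mathcal G} = \mathcal O(d+2)$ and $K_{\mathbb P^2} = \mathcal O(-3)$ yield $T^{*}_{\mathcal G} = \mathcal O(d-1)$.

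Since $F$ is only rational, I would restrict to $U = \mathbb P^{2}\setminus I(F)$, on which $F$ is a morphism; the indeterminacy locus $I(F)$ has codimension two, so any identity of line bundles proved on $U$ extends uniquely across $I(F)$ because $\mathbb P^2$ is smooth (equivalently, resolve $I(F)$ and push the identity back down, the exceptional contributions remaining supported in the critical set; for the map $\mathcal T=[X^{d-1}:Y^{d-1}:Z^{d-1}]$ used here this step is vacuous, $\mathcal T$ being a finite morphism). The heart of the matter is then that the naive pull-back $F^{*}\omega_{\mathcal G}$, a section of $\Omega^{1}_{X}\otimes F^{*}N_{\mathcal G}$, is in general no longer saturated: it vanishes along an effective divisor $\mathcal E$ whose support lies in the critical set of $F$. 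Dividing by a local equation of $\mathcal E$ produces the saturated form defining $\mathcal H = F^{*}\mathcal G$, whence $N_{\mathcal H} = F^{*}N_{\mathcal G} - \mathcal E$ in $NS_{\mathbb Z}$.

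Combining $T^{*}_{\mathcal H} = K_{X}+N_{\mathcal H}$, $T^{*}_{\mathcal G}=K_{\mathbb P^2}+N_{\mathcal G}$ and $K_{X}=F^{*}K_{\mathbb P^2}+\mathcal D_{F}$ then gives
$$F^{*}T^{*}_{\mathcal G} - T^{*}_{\mathcal H} = \big(F^{*}K_{\mathbb P^2}+F^{*}N_{\mathcal G}\big) - \big(F^{*}K_{\mathbb P^2}+\mathcal D_{F}+F^{*}N_{\mathcal G}-\mathcal E\big) = \mathcal E - \mathcal D_{F}.$$
Setting $\mathcal D := \mathcal D_{F}-\mathcal E$ yields exactly $F^{*}T^{*}_{\mathcal G} = T^{*}_{\mathcal H} - \mathcal D$; since $\mathcal E\geq 0$ is effective, we obtain $\mathcal D\leq \mathcal D_{F}$ with $\mathrm{supp}(\mathcal D)$ contained in the critical set, and $\mathcal D$, being a difference of two effective divisors, is only \emph{not necessarily} effective, precisely as stated.

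The step I expect to be the main obstacle is the local computation establishing $N_{\mathcal H} = F^{*}N_{\mathcal G}-\mathcal E$ with $\mathcal E$ effective and supported in the critical set. Concretely, at a generic point of a component $R$ of the ramification divisor one would choose coordinates in which $F$ reads $(u,v)\mapsto(u,v^{r+1})$ with $r = \mathrm{ord}_{R}\mathcal D_{F}$, write $\omega_{\mathcal G}=P\,ds+Q\,dt$ with $\gcd(P,Q)=1$, and compute the order of vanishing of $F^{*}\omega_{\mathcal G}=P(u,v^{r+1})\,du+(r+1)v^{r}Q(u,v^{r+1})\,dv$ along $v=0$; the outcome depends on whether the branch curve $F(R)$ is $\mathcal G$-invariant, as it is along the three coordinate lines in our setting. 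Carrying out this analysis uniformly along every component, and verifying that passage to the resolution and back never moves $\mathrm{supp}(\mathcal D)$ off the critical set, is the delicate bookkeeping — exactly the content of the cited computation in \cite{Fa-Pe}.
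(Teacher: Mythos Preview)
The paper does not give its own proof of this proposition: it is stated as a result ``extracted from \cite{Fa-Pe}, pp.~5'' and then \emph{used} in the proof of Proposition~\ref{graupargenerico}, not proved. So there is no argument in the paper to compare your proposal against.

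That said, your sketch is correct and is essentially the argument one finds in Favre--Pereira. The key identities $T^{*}_{\mathcal F}=K_X+N_{\mathcal F}$, $N_{\mathcal H}=F^{*}N_{\mathcal G}-\mathcal E$ with $\mathcal E\geq 0$ the divisorial zero locus of the pulled-back twisted form, and the ramification formula $K_X=F^{*}K_{\mathbb P^2}+\mathcal D_F$ combine exactly as you write to give $\mathcal D=\mathcal D_F-\mathcal E$, hence $\mathcal D\leq\mathcal D_F$ with support in the critical set. Your local model $(u,v)\mapsto(u,v^{r+1})$ and the observation that the contribution to $\mathcal E$ along a ramification component depends on whether its image is $\mathcal G$-invariant is precisely the mechanism exploited later in the appendix, where the three invariant lines force $\mathcal E=(\alpha-1)\tilde D_0+(\beta-1)\tilde D_1+(\gamma-1)\tilde D_2$ and hence $\mathcal D=\mathcal D'$. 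Nothing is missing; you have simply supplied what the paper chose to quote.
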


The idea of the proof of Proposition \ref{graupargenerico} is the following: since $I(f)$ is a set of codimension $3$, by Bertini's theorem we can embed a generic $\mathbb P^2$  on $\mathbb P^n$ in such a way that it doesn't intersect $I(f).$ To be more clear in our discussion we will denote the embeded $\mathbb{P}^2$ by $\Delta$. We consider the restriction  $\mathcal{F}|_{\Delta}=\mathcal H.$ Then we concentrate on the study of $F=f |_{\Delta}: (\Delta, \mathcal H) \mapsto (\mathbb P^2 , \mathcal{G})$.

\begin{proof} Note that $F$ is in $BRM(2,\nu,\alpha,\beta,\gamma),$ since $\Delta$ is given by homogeneous linear equations on $\mathbb C^{n+1}.$
Since $\Delta \cap I(f)= \emptyset$, $F$ is holomorphic. Denote the zero divisor of $F_i$ by $\tilde D_{i}$ and the zero divisor associated to $(dF_{0}\wedge dF_{1}\wedge dF_{2})$ by $\mathcal {D}'$. From Proposition \ref{favrejorge} we have that the divisor $D$ consists of $\mathcal D_{F}-D_{0}$, where $\mathcal D_{F}=(\alpha -1)\tilde D_{0} + (\beta-1)\tilde D_{1}+(\gamma -1)\tilde D_{2} + \mathcal {D}'$ . Recall that $$DF=(\alpha\beta\gamma)[(F_{0}^{\alpha -1}F_{1}^{\beta-1}F_{2}^{\gamma-1})(dF_{0}\wedge dF_{1}\wedge dF_{2})].$$

On the other hand since $\mathcal G$ has only $3$ invariant algebraic curves and of its singularities are of Poincar\'e-type then the pull-back of the divisor $D_{0}$ is equal to $$(\alpha -1)\tilde D_{0} + (\beta-1)\tilde D_{1}+(\gamma -1)\tilde D_{2}.$$ This implies that  $D=\mathcal D '.$ Hence $T^{*} _{\mathcal {H}} = F^{*}T^{*} _{\mathcal {G}}+ \mathcal {D}'$.

Now denote by $d'$ the degree of $\mathcal {H}$. We have that $$d'-1=\nu(d-1)+\nu(\frac{1}{\alpha}+\frac{1}{\beta}+\frac{1}{\gamma})-3$$ and the result follows. 
\end{proof}

\bibliographystyle{amsalpha}

\end{document}